\documentclass[11pt]{article}
\usepackage{mathrsfs}
\usepackage{amsfonts}
\usepackage{amssymb}
\usepackage{amsmath}
\usepackage{amsthm}
\usepackage{color}
\usepackage{enumitem}
\usepackage{tikz}

\usetikzlibrary{calc,positioning,patterns}

\newcommand{\circled}[1]{
    \tikz[baseline=(char.base)]{
        \node[shape=circle,draw,inner sep=1pt,fill=white] (char) {\scriptsize #1};
    }
}

\theoremstyle{plain}
\newtheorem{thm}{Theorem}[section]
\newtheorem{cor}[thm]{Corollary}
\newtheorem{lem}[thm]{Lemma}
\newtheorem{prop}[thm]{Proposition}

\newtheorem{Problem}{Problem}[section]
\newtheorem{Conj}{Conjecture}[section]
\numberwithin{equation}{section}

\newtheorem{Case}{Case}
\newtheorem*{Notation}{Notation}
\theoremstyle{definition}

\newtheorem{exam}{Example}[section]
\theoremstyle{remark}
\newtheorem{rem}{Remark}[section]

\newlength{\numwidth}
\newcommand{\formulaitem}[2][]{
  \refstepcounter{equation}
  \settowidth{\numwidth}{(\theequation)}
  \addtolength{\numwidth}{1em}          
  \begin{minipage}[b]{\dimexpr\linewidth-\numwidth\relax}
    #2
  \end{minipage}
  \hfill (\theequation)
  \ifx\relax#1\relax\else\label{#1}\fi%
}
\begin{document}
\title{{\Large\bf {Arveson’s Gauss-Bonnet-Chern Formula
for Hilbert Modules in the Multiplier-Algebra Framework\uppercase\expandafter{}}}
\thanks{
This work is supported by National Natural Science Foundation of China : (12271298 and 11871308).}}
\author{{\normalsize Penghui Wang, Ruoyu Zhang and Zeyou Zhu} \\
{ School of Mathematics, Shandong University,} {\normalsize Jinan, 250100, China} }
\date{}
 \maketitle
\begin{abstract}
The present paper continues Arveson's program on curvature, Euler characteristic, and Gauss-Bonnet-Chern type formulae for Hilbert modules. For regular unitarily invariant complete Nevanlinna--Pick spaces on the unit ball in \(\mathbb{C}^d\), we prove that Arveson's Gauss-Bonnet-Chern formula holds unconditionally over the full multiplier algebra. Together with the polynomial-ring case, this shows that the validity of the formula depends essentially on the coefficient algebra.
We then solve Arveson's finite defect problem in the same framework by giving a sharp classification of finite defect submodules. This problem is natural in curvature theory, since both curvature and Euler characteristic are governed by the defect structure. Finally, using commutative-algebra methods, we apply this classification to unitary-equivalence rigidity for finite-codimensional submodules of vector-valued reproducing kernel Hilbert modules. 
\end{abstract}

\bigskip

\noindent{{\bf 2020  Mathematics Subject Classification:} 47A13; 46E22}

\bigskip

\noindent{$\mathbf{Key Words:}$ Hilbert modules; complete Nevanlinna-Pick spaces; multiplier algebras; Arveson's Gauss-Bonnet-Chern formula; finite defect problem.}

\section{Introduction}
Let $\mathbb T=(T_1,\cdots, T_d)$ be a tuple of commuting operators on a Hilbert space $\cal H$. Then $\cal H$ can be naturally viewed as a Hilbert module over the polynomial ring, equipped with the module action defined by 
$$
p\cdot x=p(T_1,\cdots,T_d) x,\,p\in\mathbb C[z_1,\cdots,z_d],\,x\in \cal{H}.
$$
A Hilbert module $\cal H$ is called a $d$-contractive Hilbert module if 
$
\sum\limits_{i=1}^d T_iT_i^*\leq I.
$  Historically, Drury-Arveson module $H_d^2$ serves as an important example of $d$-contractive Hilbert module with universal properties, which is a reproducing kernel Hilbert space of holomorphic functions on the unit ball $\mathbb B_d$ in $\mathbb C^d$, with the reproducing kernel
$$
{\mathcal K}(z,w)={\frac{1}{1-\langle z,
w\rangle}},\quad z,w\in\mathbb B_d,
$$   
and the module action is given by the natural multiplication by polynomials. Here, $H_d^2$ having the universal property means that any pure, finite rank $d$-contractive Hilbert module is unitarily equivalent to a quotient module in $H^2_d\otimes\mathbb C^N$ for some positive integer  $N$.

In the featured reviewed paper  \cite{Arveson curvature}, Arveson introduced the curvature invariant $K(\cal M^\perp)$ and Euler characteristic $\chi(\cal M^\perp)$ for quotient modules in $H_d^2\otimes \mathbb C^N(N\in \mathbb N^+)$. Moreover, for homogeneous quotient modules $\cal M^\perp$, Arveson \cite[Theorem B]{Arveson curvature} established an operator-theoretic analogue of the Gauss-Bonnet-Chern, abbreviated GBC formula
$$
K(\cal M^\perp)=\chi(\cal M^\perp),
$$
which will be called Arveson's GBC formula in what follows. Naturally, Arveson \cite{Arveson curvature} raised the following problem
\begin{Problem}\label{problem1}
 For which quotient modules of $H^2_d \otimes \mathbb{C}^N$
 does Arveson's GBC formula hold?   
\end{Problem}
In \cite{Fang Xiang}, Fang obtained Arveson's GBC formula for algebraic submodules, where a submodule is called algebraic if it can be generated by polynomials. It is worth pointing out that Arveson's GBC formula for noncommuting operator tuples has been extensively developed by Popescu \cite{Popescu2001, Popescu2015, Popescu2017} and the references therein.  Recently, by introducing the local algebraicity of submodules,  Problem \ref{problem1} was solved by the authors in \cite{curv}. 
\begin{thm}\label{them1}\cite[Theorem 1.1]{curv}
Let $\cal M$ be a submodule in $H^2_d\otimes \mathbb C^N$, then 
$$
K({\cal M}^\perp)=\chi({\cal M}^\perp)
$$
if and only if $\cal M$ is locally algebraic.
\end{thm}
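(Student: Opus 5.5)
The plan is to reduce both sides of the GBC identity to a local computation at the boundary of the ball and compare them point by point. I use the standard descriptions of the two invariants.

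\emph{Curvature.} By Arveson's definition, and the later work of Greene--Richter--Sundberg, the curvature is an almost-everywhere boundary fiber dimension. Write $P_{\cal M}$ for the projection onto $\cal M$. Then $P_{\cal M}$ is given by an inner multiplier $\Phi$ through $P_{\cal M}=M_\Phi M_\Phi^*$ (McCullough--Trent), and
$$K({\cal M}^\perp)=N-\operatorname{rank}\Phi(\zeta)\quad\text{for }\sigma\text{-a.e. } \zeta\in\partial\mathbb B_d .$$
This rank equals the generic fiber dimension $\operatorname{fd}({\cal M})=\sup_{\lambda\in\mathbb B_d}\dim\{f(\lambda):f\in\cal M\}$.

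\emph{Euler characteristic.} $\chi({\cal M}^\perp)$ is computed from a finite free resolution of the algebraic module $M_{\cal M}$. This module is generated over $\mathbb C[z]$ by $P_{{\cal M}^\perp}\mathbb C^N$, equivalently by $\mathbb C[z]\otimes\mathbb C^N$ modulo $\cal M\cap(\mathbb C[z]\otimes\mathbb C^N)$. Its Euler characteristic equals the rank over the field $\mathbb C(z)$, so
$$\chi({\cal M}^\perp)=N-\operatorname{rank}_{\mathbb C(z)}\bigl({\cal M}\cap(\mathbb C[z]\otimes\mathbb C^N)\bigr).$$

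The identity $K=\chi$ therefore becomes the statement that the polynomial part ${\cal M}_{\rm alg}:={\cal M}\cap(\mathbb C[z]\otimes\mathbb C^N)$ has the same generic fiber dimension as $\cal M$. By definition, $\cal M$ is locally algebraic when, near generic points, the fibers of $\cal M$ are spanned by fibers of polynomial elements of $\cal M$. In other words, the fiber of ${\cal M}_{\rm alg}$ equals the fiber of $\cal M$ on a nonempty open set, and hence on a dense open set, since both fiber dimensions are lower semicontinuous and attain their generic values off analytic sets.

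\emph{Equivalence.} Suppose $\cal M$ is locally algebraic. Then $\operatorname{fd}({\cal M}_{\rm alg})=\operatorname{fd}({\cal M})$. The rank over $\mathbb C(z)$ of a polynomial module equals its generic pointwise rank, so $\chi=K$. Conversely, suppose $K=\chi$. Then $\operatorname{fd}({\cal M}_{\rm alg})=\operatorname{fd}({\cal M})$. Off a proper analytic subset the fiber of ${\cal M}_{\rm alg}$ sits inside that of $\cal M$ and has the same dimension, so the two fibers coincide there, which is local algebraicity.

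\emph{Main obstacle.} The hardest step is the first one: identifying Arveson's curvature with $N-\operatorname{fd}({\cal M})$. This rests on the Greene--Richter--Sundberg theorem that $\Phi$ has boundary values that are partial isometries a.e., with rank equal to the generic rank of $\Phi$ inside the ball. A second point needs care: whether Arveson's algebraic module $M_{{\cal M}^\perp}$ really equals $\mathbb C[z]\otimes\mathbb C^N$ modulo ${\cal M}_{\rm alg}$. Up to rank, I would verify this by showing that the rank of a finitely generated $\mathbb C[z]$-module is additive along the exact sequence
$$0\to{\cal M}_{\rm alg}\to\mathbb C[z]^N\to M\to 0,$$
and by checking that the kernel of the map $\mathbb C[z]^N\to {\cal M}^\perp$, $p\mapsto P_{{\cal M}^\perp}p$, is exactly ${\cal M}_{\rm alg}$.
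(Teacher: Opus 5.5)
Your proof is correct. Its skeleton is the paper's: Lemma~\ref{dimdefect}, Theorem~\ref{thm 1.2} and Proposition~\ref{3 equal} with $\mathcal R=\mathcal P_d$. In both, the curvature is identified with $N-\operatorname{fd}(\mathcal M)$ by quoting a Greene--Richter--Sundberg type theorem; the paper quotes the CNP version of Bhattacharyya--Jindal. The Euler characteristic is identified with $N-\operatorname{fd}(\mathcal M\cap(\mathcal P_d\otimes\mathbb C^N))$, and the two generic fibre dimensions are compared through a nonvanishing minor.

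The genuine difference is the middle step. The paper proves Theorem~\ref{thm 1.2} by hand, in three moves:
\begin{itemize}
\item it picks a maximal subfamily $\{P_{\mathcal M^\perp}(1\otimes e_i)\}_{i\le l}$ of the generators that is independent over $\mathcal R$;
\item it converts the dependence relations of the remaining generators into $N-l$ elements of $\mathcal M$ with generically independent values;
\item for the opposite inequality, it takes $N-l+1$ pointwise independent elements and forms cofactor (Cramer-type) combinations, producing a relation among any prescribed $l$ generators.
\end{itemize}
You instead note that $p\mapsto P_{\mathcal M^\perp}p$ is a surjective module map from $\mathcal P_d\otimes\mathbb C^N$ onto $\mathbb M_{\mathcal M^\perp}$. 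This uses co-invariance of $\mathcal M^\perp$ and $\operatorname{ran}\Delta_{\mathcal M^\perp}=P_{\mathcal M^\perp}(\mathbb C\otimes\mathbb C^N)$. Its kernel is exactly $\mathcal M\cap(\mathcal P_d\otimes\mathbb C^N)$, so your identification is an honest isomorphism, not merely one ``up to rank''. Exactness of tensoring with the fraction field, together with the minors description of the rank of a submodule of $\mathcal P_d^N$, then finishes.

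Your route is shorter and more conceptual. It also carries over unchanged to any unital subalgebra $\mathcal R\subset\operatorname{Mult}(H_{\mathbf K})$, since localisation is exact without a Noetherian hypothesis, so it would reprove the paper's Theorem~\ref{thm 1.2} in full generality. The paper's computation stays within elementary linear algebra over an integral domain and never appeals to flatness.

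Two small remarks.
\begin{itemize}
\item Local algebraicity, as defined, asks only for a single point $\lambda_0\in\operatorname{mp}(\mathcal M)$ at which polynomial elements span the fibre, not an open set. Your paraphrase is therefore slightly off, but both of your implications pass through $\operatorname{fd}(\mathcal M\cap(\mathcal P_d\otimes\mathbb C^N))=\operatorname{fd}(\mathcal M)$, so they cover the actual definition.
\item When $\mathcal M$ contains constants, deriving $K=N-\operatorname{fd}(\mathcal M)$ from Arveson's formula needs a splitting. Set $F_0=\{v:1\otimes v\in\mathcal M\}$ and write $\mathcal M=(H^2_d\otimes F_0)\oplus(\mathcal M\cap(H^2_d\otimes F_0^\perp))$; this is needed because $\operatorname{ran}\Delta_{\mathcal M^\perp}$ then has dimension $N-\dim F_0$ rather than $N$. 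This splitting is the role Lemma~\ref{dimdefect} plays in the paper, and you should state it.
\end{itemize}
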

Theorem \ref{them1} shows that Arveson's GBC formula holds true \emph{conditionally}. In the present paper, we will establish such a formula in a more general framework of a regular unitarily invariant complete Nevanlinna-Pick(CNP for short) reproducing kernel Hilbert space $H_{\mathbf K}$ with the reproducing kernel $\mathbf K:\mathbb B_d\times \mathbb B_d\to \mathbb C$. An $f\in H_{\mathbf{K}}$ is called a multiplier if the multiplication operator $M_{f}$ is continuous. The algebra of all multipliers is called the multiplier algebra, denoted by $\operatorname{Mult}(H_{\mathbf{K}})$, which has been extensively studied in \cite{CH,CT,HM} and the references therein. We will prove that Arveson's GBC formula holds true \emph{unconditionally} when we consider $H_{\mathbf K}$ as a Hilbert module over its multiplier algebra $\operatorname{Mult}(H_{\mathbf{K}})$. To illustrate this, we need some preparations.

\noindent{\bf 1.1 Regular unitarily invariant CNP kernel.} Let $\mathbf{K}: \mathbb{B}_d \times \mathbb{B}_d \rightarrow \mathbb{C}$  be a unitarily invariant reproducing kernel, defined by
\begin{equation}\label{k}
  \mathbf{K}(z,w)=\sum_{n=0}^{\infty}a_n \left\langle z,w \right\rangle^{n}, \, z,w\in \mathbb B_d,
\end{equation}
with $a_0 = 1$ and $a_n > 0$ for all $n \geq 1$. Furthermore, such a reproducing kernel is called  a regular  CNP kernel if
\begin{enumerate}[label=(\Roman*)] 
   \item it is an irreducible CNP kernel, or equivalently, there exists a sequence of non-negative real numbers $\{b_n\}_{n\geq 1}$ satisfying 
   \begin{equation}\label{1.a intro}
     1-\frac{1}{\mathbf{K}(z,w)}=\sum\limits_{n=1}^{\infty}b_n\left\langle z,w \right\rangle^{n},
   \end{equation}
      \item \formulaitem [asymtotic] {$\lim\limits_{n\rightarrow\infty}\frac{a_n}{a_{n+1}}=1$,}
      \item \formulaitem [sum bn] {$\sum\limits_{n=0}^{\infty}a_n=\infty$, i.e. $\sum\limits_{n=1}^{\infty}b_n=1$.} 
\end{enumerate}
In the past few decades, unitarily invariant CNP reproducing kernels have been systematically studied to attack interpolation problems; see \cite{AglerMcCarthy2000, AglerMcCarthyBook2002,DHS,HM,trent} and the references therein.

To continue, we need some notations. Let
\begin{equation}\label{N^d}
\mathbb{N}^{d} = \left\{ (\alpha_1,\cdots,\alpha_d) \mid \alpha_i \in \mathbb{N} \text{ for all } 1\leq i\leq d \right\},
\end{equation}
and $\mathbb{N}_+^{d}=\mathbb{N}^{d}\setminus \{(0,\cdots,0)\}$. Moreover, write 
$$
a_\alpha=\frac{|\alpha|!}{\alpha!}a_{|\alpha|} \text{ for $\alpha\in\mathbb{N}^{d}$ and }
 b_\beta=\frac{|\beta|!}{\beta!}b_{|\beta|}\text{ for }\beta\in\mathbb{N}_+^{d}.
$$ A $d$-tuple $\mathbb{T}=(T_1,\cdots,T_d)$  of commuting  bounded operators is called a $1/\mathbf{K}$-contraction if  $\sum\limits_{\alpha\in\mathbb{N}_+^{d}}b_\alpha\mathbb{T}^\alpha (\mathbb{T}^\alpha)^*$ converges in the strong operator topology with 
\begin{equation}\label{1/k contraction}
  I-\sum\limits_{\alpha\in\mathbb{N}_+^{d}}b_\alpha\mathbb{T}^\alpha (\mathbb{T}^\alpha)^*\geq 0,
\end{equation}
and in this case, we can define the defect operator $\Delta_{\mathbb{T},\mathbf{K}}$ for the tuple $\mathbb{T}$ to be  
\begin{eqnarray}\label{defect frac}
\Delta_{\mathbb{T},\mathbf{K}}=\left(1-\sum\limits_{\alpha\in\mathbb{N}_+^{d}}b_\alpha\mathbb{T}^\alpha (\mathbb{T}^\alpha)^*\right)^\frac{1}{2}.
\end{eqnarray}
{{A $1/\mathbf{K}$-contraction $\mathbb{T}$ is called pure if the series $\sum\limits_{\alpha\in\mathbb{N}^{d}}a_\alpha \mathbb{T}^\alpha \Delta_{\mathbb{T},\mathbf{K}}^2 (\mathbb{T}^\alpha)^*$ converges to $I$ in the strong operator topology.}}
Similar to the Drury-Arveson space, $H_{\mathbf{K}}$ has a universal property. Precisely, by \cite[Theorem 2.2]{CNP character}, any pure $1/\mathbf{K}$-contractive Hilbert module $\mathcal{H}$ is unitarily  equivalent to a quotient module $\mathcal{M}^\perp$ in $H_{\mathbf{K}} \otimes \overline{\operatorname{ran}\Delta_{\mathbb{T},\mathbf{K}}}$. 
  It follows that any pure $1/\mathbf{K}$-contraction $\mathbb{T}$ induces a module structure of the Hilbert space $\mathcal{H}$ over the multiplier algebra $\operatorname{Mult}(H_{\mathbf{K}})$. A pure $1/\mathbf{K}$-contractive Hilbert module $\mathcal{H}$ is called a finite rank Hilbert module if the defect operator $\Delta_{\mathbb{T},\mathbf{K}}$ is of finite rank. Without confusion, we will not distinguish between $\Delta_{\mathcal{H}}$ and $\Delta_{\mathbb{T}, \mathbf{K}}$. The rank of $\mathcal{H}$ is defined by $$\operatorname{rank}(\mathcal{H})= \dim\operatorname{ran}\Delta_{\mathcal{H}}.$$

\begin{rem}
It is worth pointing out that the regular unitarily invariant CNP reproducing kernel on the open unit ball $\mathbb B$ in an infinitely-dimensional separable Hilbert space $\cal H$  also makes sense. Precisely,  a unitarily invariant reproducing kernel $\mathbb{K}:\mathbb B\times \mathbb B\to \mathbb C$,  defined by
$$
 \mathbb{K}(z,w)=\sum_{n=0}^{\infty}a_n \left\langle z,w \right\rangle^{n}, \, z,w\in \mathbb B,
$$
is called a regular CNP reproducing kernel if the conditions (I)-(III) are satisfied, and the associated reproducing kernel Hilbert space is denoted by $H_{\mathbb K}$.  

\end{rem}
 
\noindent{\bf 1.2 Arveson's curvature invariant and Euler characteristic.} A closed subspace ${\cal M}\subset H_{\mathbf{K}}$ is called a submodule, if
$$p{\cal M}\subset{\cal M}, \text{ for }p\in {\cal P}_d,$$ and $\mathcal M^\perp$ is the associated quotient module. Now,  for a quotient module $\cal M^\perp$ in $H_{\mathbf{K}}\otimes\mathbb C^N$, let $S_{z_i}$ be the compression of $M_{z_i}\otimes I_{\mathbb{C}^N}$ onto the quotient space $\mathcal{M}^\perp$, i.e.
$$
S_{z_i} f=P_{\mathcal{M}^\perp}(M_{z_i}\otimes I_{\mathbb{C}^N}) f, \, \forall f\in \mathcal{M}^\perp.
$$
For $z\in\mathbb B_d$, define $S_{\mathbf{K},d}(z)=\sum\limits_{\alpha\in\mathbb{N}_+^{d}}b_\alpha\overline{z^\alpha} S_{z^\alpha}$, and
\begin{eqnarray}\label{eq:F(z)}
F(z)=\Delta_{\mathcal{M}^\perp} (I-S_{\mathbf{K},d}(z)^*)^{-1}(I-S_{\mathbf{K},d}(z))^{-1}\Delta_{\mathcal{M}^\perp},
\end{eqnarray}
which are positive bounded linear operators on  $\operatorname{ran} \Delta_{\mathcal M^\perp}$. It can be shown that the radial limits $\lim\limits_{r\to 1^-}\|\mathbf{K}_{r\xi}\|^{-2} \operatorname{tr}(F(r\xi))$ exists almost everywhere on the boundary $\partial \mathbb B_d$ relative to $d\sigma_d$ on $\partial \mathbb{B}_{d}$, and Arveson's curvature invariant is defined by
$$
K(\mathcal{M}^\perp)=\int_{\partial \mathbb B_d}\lim\limits_{r\to 1^-}\|\mathbf{K}_{r\xi}\|^{-2} \operatorname{tr}(F(r\xi))d\sigma_d(\xi),
$$
where $\sigma_d$ is the rotationally invariant probability measure on $\partial \mathbb{B}_{d}$. For quotient modules in Drury-Arveson module, such an invariant was deeply studied by \cite{Arveson Dirac, CNP Curvature, CNP character, Fang Xiang, GRS, inner multipliers, Levy, Muhly, Parrott} and the references therein.

In \cite{Arveson curvature}, Arveson introduced the Euler characteristic for a quotient module $\cal{M}^\perp$ in $H_{d}^2\otimes\mathbb C^N$. 
To simplify the notation, in what follows, set ${\cal P}_d=\mathbb C[z_1,\cdots,z_d]$. Let 
\begin{eqnarray}\label{eq:fgmodule}
\mathbb{M}_{\mathcal{M}^\perp}=\operatorname{span}\left\{f(S_{z_1},\cdots,S_{z_d}) \cdot \Delta_{\mathcal{M}^\perp} \zeta \mid f \in {\cal P}_d, \zeta \in \mathcal{M}^\perp\right\}.
\end{eqnarray}
Since ${\cal P}_d$ is Noetherian,  $\mathbb{M}_{\mathcal{M}^\perp}$ has finite free resolutions
 \begin{eqnarray}\label{Euler-ch}
 0\rightarrow F_n\rightarrow\cdots\rightarrow F_2\rightarrow F_1\rightarrow \mathbb{M}_{\mathcal{M}^\perp,{\mathcal P}_d}\rightarrow 0,\end{eqnarray}
 each $F_k$ being a sum of $\beta_k$ copies of the free module ${\cal P}_d$. The Euler characteristic is defined by
\begin{equation}\label{chenlaoshi}
 \chi_{\mathcal P_d}(\mathcal{M}^\perp)=\sum\limits_{k=1}^{n}(-1)^{k+1}\beta_k.
\end{equation}
Let 
$\mathcal{R}\subset\operatorname{Mult}(H_\mathbf{K})$ 
be a unital subalgebra. For example, we may let 
$$
{\mathcal R}={\mathcal P}_d,\,H(\overline{\mathbb{B}}_d),\, A(H_{\mathbf K})\text{ or } \operatorname{Mult}(H_{\mathbf{K}}),
$$ 
where $H(\overline{\mathbb{B}}_d)$ is the algebra of holomorphic functions on some neighbourhoods of the closed unit ball $\overline{\mathbb{B}}_d$ and $A(H_{\mathbf{K}})$ denotes the closure of ${\cal P}_d$ in $\operatorname{Mult}(H_{\mathbf{K}})$. Similar to (\ref{eq:fgmodule}), for any quotient module ${\cal M}^\perp$ in $H_{\mathbf{K}}$ one can naturally define
\begin{eqnarray}
\mathbb{M}_{\mathcal{M}^\perp,{\mathcal R}}=\operatorname{span}\left\{f(S_{z_1},\cdots,S_{z_d}) \cdot \Delta_{\mathcal{M}^\perp} \zeta \right|\left. f \in \mathcal{R}, \zeta \in \mathcal{M}^\perp\right\}.
\end{eqnarray}
Notice that the algebra $\mathcal R$ may not be Noetherian, and the finite free resolution (\ref{Euler-ch}) may not exist and hence the Euler characteristic can not be defined directly. However, for $\mathcal R={\mathcal P_d}$ it can be easily verified that $\chi_{{\mathcal P}_d}(\mathcal{M}^\perp)={\operatorname {rank}} ({\mathbb M_{{\cal M}^\perp,\mathcal P_d}})$, where ${\operatorname{rank}} ({\mathbb M_{{\cal M}^\perp,\mathcal P_d}})$ is the rank of the finitely generated module $\mathbb M_{{\cal M}^\perp,\mathcal P_d}$; see \cite[Page 873]{Rotman}.
 {\emph{In what follows, for convenience we denote }
\begin{equation}\label{oulashudedingyi}
 \chi_{\mathcal R}(\cal{M}^\perp)=\operatorname{rank} ({\mathbb M_{\cal{M}^\perp,\mathcal R}}).
\end{equation}
Based on  this observation, Problem \ref{problem1} for Arveson's GBC formula for Hilbert modules over $\mathcal R$ should be revised as the following problem. }
\begin{Problem}\label{Problem 2}
 For which  submodules $\cal M$ in $H_{\mathbf{K}} \otimes \mathbb{C}^N$, 
 $$
 K(\cal{M}^\perp)=\chi_{\cal R}(\cal{M}^\perp)?
 $$
\end{Problem}  

\noindent{\bf 1.3 Main results.} There are two main contributions in this paper. In the settings of vector-valued regular unitarily invariant CNP reproducing kernel Hilbert modules, firstly we establish Arveson's GBC formula; secondly we solve the finite defect problem which is used to attack the unitary-equivalence rigidity problem.

\noindent{\bf 1.3.1 Arveson's GBC formula.} First, we illustrate our main result on Arveson's GBC formula as follows. 
The following theorem states that whether Arveson's GBC formula holds true unconditionally depends heavily on the choice of the unital subalgebra of $\operatorname{Mult}(H_{\bf K})$.
\begin{thm}\label{thm1.4}
Let $\cal R$ be a unital subalgebra of $\operatorname{Mult}(H_{\bf K})$. Then we have
\begin{itemize}
\item[1)] if $\cal R$ contains a nonzero ideal of $\operatorname{Mult}(H_{\mathbf{K}})$, then Arveson's GBC formula always holds true unconditionally, i.e. for any nonzero submodule $\cal M$ in $H_{\mathbf{K}}\otimes\mathbb{C}^N$,
$$
K({\cal M}^\perp)=\chi_{\cal R}(\cal M^{\perp}),
$$
\item[2)] if $\cal R$ is a unital subalgebra of $A(H_{\mathbf{K}})$, then for any $0\leq m\leq N$, there is a submodule $\cal M$ in $H_{{\bf K}}\otimes \mathbb C^N$, such that 
$$
\chi_{\cal R}({\cal M}^{\perp})-K({\cal M}^\perp)=m.
$$
\end{itemize}
\end{thm}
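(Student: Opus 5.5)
The plan is to reduce both parts to a single identity for the curvature. We show that for any submodule $\mathcal M$ of $H_{\mathbf K}\otimes\mathbb C^N$,
$$K(\mathcal M^\perp)=N-\operatorname{rank}_{\operatorname{Mult}}(\mathcal M),$$
where the rank on the right is the generic rank of $\mathcal M$. This is $\sup_{z\in\mathbb B_d}\dim\{f(z): f\in\mathcal M\}$. In the CNP setting we get it from the McCullough--Trent (Beurling-type) representation $P_{\mathcal M}=M_\Phi M_\Phi^*$ with $\Phi$ an inner multiplier. The key formula is then
$$\|\mathbf K_{z}\|^{-2}\operatorname{tr}F(z)=N-\operatorname{tr}\bigl(\Phi(z)\Phi(z)^*\bigr),$$
proved exactly as in Arveson's Drury--Arveson computation, using $1-1/\mathbf K$ from (\ref{1.a intro}). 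The boundary values of $\Phi\Phi^*$ are a.e. projections of rank equal to the generic rank of $\mathcal M$. Getting this requires condition (\ref{sum bn}), which makes the boundary values of inner multipliers partial isometries a.e., together with (\ref{asymtotic}), which controls the normalization $\|\mathbf K_{r\xi}\|^{-2}$. Integrating gives the identity for $K(\mathcal M^\perp)$. I take this boundary-value step to be known from the CNP curvature literature cited above, and adapt it if needed.

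\textbf{Part 1).} We compute $\chi_{\mathcal R}(\mathcal M^\perp)=\operatorname{rank}\mathbb M_{\mathcal M^\perp,\mathcal R}$ and show it equals $N-\operatorname{rank}(\mathcal M)$.

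First, $\mathbb M_{\mathcal M^\perp,\mathcal R}$ is identified with the $\mathcal R$-module $\mathcal R\otimes\mathbb C^N/(\mathcal M\cap(\mathcal R\otimes \mathbb C^N))$. The identification is via $\Delta_{\mathcal M^\perp}\zeta\mapsto P_{\mathcal M^\perp}(1\otimes e)$: indeed $\Delta_{\mathcal M^\perp}^2=P_{\mathcal M^\perp}(E_0\otimes I)P_{\mathcal M^\perp}$, so $\operatorname{ran}\Delta$ is spanned by $P_{\mathcal M^\perp}(1\otimes e_j)$. It follows that $f(S)\Delta\zeta$ corresponds to $P_{\mathcal M^\perp}(f\otimes e)$, and the kernel consists of the $\mathcal R$-valued vectors lying in $\mathcal M$.

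The rank of a module over a (non-Noetherian) domain is its dimension after tensoring with the fraction field. So
$$\chi_{\mathcal R}=N-\dim_{\operatorname{Frac}\mathcal R}\operatorname{span}\bigl(\mathcal M\cap\mathcal R^N\bigr).$$
It remains to show that $\mathcal M\cap\mathcal R^N$ has the full generic rank of $\mathcal M$. Let $J\subset\mathcal R$ be the nonzero ideal of $\operatorname{Mult}(H_{\mathbf K})$ and fix $0\ne h\in J$. Each column $\phi_k$ of $\Phi$ is a multiplier-valued vector, and $h\phi_k\in J^N\subset\mathcal R^N$. Moreover $h\phi_k\in\mathcal M$, because $\mathcal M$ is invariant under all multipliers: it is invariant under polynomials, and polynomials are weak-* dense in the multipliers. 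The vectors $h\phi_k$ achieve generic rank $\operatorname{rank}\mathcal M$, since $h$ vanishes only on a thin set. The reverse inequality is immediate, because elements of $\mathcal M$ take values in the pointwise span. Comparing with the curvature identity gives $K=\chi_{\mathcal R}$.

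\textbf{Part 2).} Here $\mathcal R\subset A(H_{\mathbf K})$, so every element of $\mathcal R$ extends continuously to $\overline{\mathbb B}_d$, being a norm-limit of polynomials in the multiplier norm, which dominates the sup norm. We take
$$\mathcal M=\mathcal N\otimes\mathbb C^m\oplus 0,$$
where $\mathcal N\ne0$ is a scalar submodule of $H_{\mathbf K}$ containing no nonzero element of $A(H_{\mathbf K})$. The generic rank of $\mathcal M$ is $m$, so $K(\mathcal M^\perp)=N-m$. On the other hand, $\mathcal M\cap\mathcal R^N=0$, so $\chi_{\mathcal R}(\mathcal M^\perp)=N$, and the difference is $m$.

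The main obstacle is constructing $\mathcal N$. My first attempt is $\mathcal N=\ker$ of a family of evaluations at an interpolating sequence $\{z_n\}\subset\mathbb B_d$ accumulating at every boundary point. If the sequence is uniformly separated and satisfies the Carleson condition for $H_{\mathbf K}$, then $\mathcal N=\{f: f(z_n)=0\ \forall n\}$ is nonzero. Any $g\in A(H_{\mathbf K})\cap\mathcal N$ is continuous on $\overline{\mathbb B}_d$ and vanishes on a set whose closure contains $\partial\mathbb B_d$. Hence $g$ vanishes on the sphere, and by the maximum principle $g=0$. The existence of such dense interpolating (zero) sequences in regular CNP spaces is the point needing care. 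Failing that, I would instead use a multiplier whose zero set clusters at all of $\partial\mathbb B_d$, built via a Blaschke-type product on a slice.
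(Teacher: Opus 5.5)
\textbf{Part 1.} This part is correct and is essentially the paper's argument. Both use the McCullough--Trent representation $P_{\mathcal M}=M_\Phi M_\Phi^*$, multiply the columns $\Phi(1\otimes e_k)$ by a fixed nonzero element of the ideal, and rely on $K(\mathcal M^\perp)=N-\operatorname{fd}(\mathcal M)$, which the paper also imports from the CNP curvature literature (Lemma~\ref{dimdefect}). Your route to $\chi_{\mathcal R}$ differs slightly. You identify $\mathbb M_{\mathcal M^\perp,\mathcal R}\cong(\mathcal R\otimes\mathbb C^N)/(\mathcal M\cap(\mathcal R\otimes\mathbb C^N))$ and compute its rank over $\operatorname{Frac}\mathcal R$. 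That rank equals the generic pointwise rank, because a nonzero minor is a nonzero holomorphic function. This is a cleaner substitute for the cofactor argument of Theorem~\ref{thm 1.2}. Also, $h\phi_k=M_\Phi(h\otimes e_k)\in\mathcal M$ directly, so no weak-$*$ density is needed.

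\textbf{Part 2: the gap.} Everything hinges on a nonzero submodule $\mathcal N\subset H_{\mathbf K}$ with $\mathcal N\cap A(H_{\mathbf K})=\{0\}$, and you leave its existence open.

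Your fallback fails for $d\ge2$. The zero set of $B(\langle z,\zeta\rangle)$ is a union of parallel slices $\{\langle z,\zeta\rangle=a_n\}$. Their closures meet $\partial\mathbb B_d$ only in the spheres $\{a_n\zeta+w:\ w\perp\zeta,\ |w|^2=1-|a_n|^2\}$, which shrink to the circle $\{\omega\zeta:|\omega|=1\}$. This set is not dense in $\partial\mathbb B_d$, so the maximum-principle argument gives nothing.

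Your primary route is the paper's, and you name the right criterion, but verifying it is the whole point. This is exactly where condition (III) is used. Put $g(x)=1-\sum_n b_nx^n$, so $1/\mathbf K(z,w)=g(\langle z,w\rangle)$. Since $b_n\ge0$ gives $|g(u)|\ge g(|u|)$ and $g$ decreases on $[0,1)$,
\[
|G(z,w)|^2=\frac{g(|z|^2)\,g(|w|^2)}{|g(\langle z,w\rangle)|^2}\le\frac{g(|z|^2)\,g(|w|^2)}{g(|z||w|)^2}.
\]
By (III), $g(x^2)\to0$ as $x\to1^-$, while $g(tx)\to g(t)>0$ for fixed $t<1$.

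So for a dense sequence $\{\zeta_j\}\subset\partial\mathbb B_d$ you can choose $c_j\uparrow1$ inductively with $|G(c_i\zeta_i,c_j\zeta_j)|^2\le\frac{1}{(j-1)j^2}$ for all $i<j$. Then $G-I$ is Hilbert--Schmidt, so $G$ is bounded, and $|G(c_i\zeta_i,c_j\zeta_j)|\le\frac12$ for $i\ne j$. By Lemma~\ref{interpolat}, $\{c_j\zeta_j\}$ is interpolating.

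A multiplier equal to $1$ at $c_1\zeta_1$ and $0$ at the other points shows that $\{f\in H_{\mathbf K}: f(c_j\zeta_j)=0,\ j\ge2\}\neq0$. For your version, which vanishes at all the points, multiply that multiplier by a polynomial vanishing at the first point. Since $\overline{\{c_j\zeta_j\}_{j\ge2}}\supset\partial\mathbb B_d$, your continuity and maximum-principle argument then gives $\mathcal N\cap A(H_{\mathbf K})=0$.

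Once $\mathcal N$ exists, your choice $\mathcal M=\mathcal N\otimes\mathbb C^m\oplus0$ works as well as the paper's $\mathbf M^{\oplus m}\oplus H_{\mathbf K}^{\oplus(N-m)}$. Yours gives $K=N-m$ and $\chi_{\mathcal R}=N$; the paper's gives $K=0$ and $\chi_{\mathcal R}=m$.
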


The following figure illustrates the two cases of coefficient subalgebras in Theorem \ref{thm1.4}.   

\begin{figure}[htbp]
\centering
\begin{tikzpicture}[scale=1.05, every node/.style={font=\small}]

    \coordinate (P1) at (1.6,0);    
    \coordinate (P2) at (4.1,0);    
    \coordinate (P3) at (7.0,0);    
    \coordinate (P4) at (10.5,0);   

    \coordinate (L0) at (0.25,0);   
    \coordinate (L1) at (0.25,1.05);
    \coordinate (R1) at (7.0,1.05); 

    \draw[red!75!black, very thick] (0,0) -- (11.4,0);

    \path[
        draw=none,
        fill=blue!7,
        pattern=north east lines,
        pattern color=black!45
    ]
        (L0) -- (L1) -- (R1) -- (P3) -- cycle;

    \draw[black, thick] (L1) -- (R1);
    \draw[black, thick] (R1) -- (P3);

    \node at (2.55,0.78) {\circled{1}};
    \node[anchor=west] at (2.90,0.78) {\textbf{Subalgebras of \(A(H_{\mathbf{K}})\)}};

    \draw[black, thick] (P4) -- ++(0,1.35);
    \draw[black, thick] ($(P4)+(0,1.35)$) -- ++(-3.5,0);
    \node[above] at ($(P4)+(-1.75,1.35)$)
    {\circled{2}\ \textbf{Subalgebras containing a nonzero ideal}};

    \fill[blue!75!black]   (P1) circle (2.9pt);
    \fill[orange!85!black] (P2) circle (2.9pt);
    \fill[teal!70!black]   (P3) circle (2.9pt);
    \fill[purple!80!black] (P4) circle (2.9pt);

   \node[below=8pt, text=blue!75!black]   at (P1) {$\mathcal P_d$};
    \node[below=8pt, text=orange!85!black] at (P2) {$H(\overline{\mathbb{B}}_d)$};
    \node[below=8pt, text=teal!70!black]   at (P3) {$A(H_{\mathbf{K}})$};
    \node[below=8pt, text=purple!80!black] at (P4) {$\operatorname{Mult}(H_{\mathbf{K}})$};

\end{tikzpicture}
\label{fig:ring-division}
\end{figure}

It will be seen that, in analogy with the above results for reproducing kernel Hilbert space over $\mathbb B_d$, there is no essential difficulty in obtaining Arveson's GBC formula for regular unitarily invariant CNP reproducing kernel Hilbert space s whose kernels are defined on the open unit ball of a separable infinite-dimensional Hilbert space.

\noindent{\bf 1.3.2 Finite defect problem.} It is easy to see that any quotient module in $H_{\mathbf K }\otimes\mathbb C^N$ has finite rank. A natural problem, raised by Arveson \cite[Page 226]{Arveson curvature} for submodules in $H_d^2$, is to study when the submodule in $H_{d}^2$ is of finite rank. Such a problem is called \emph{the finite defect problem}. 
  \begin{Problem}\label{problem3}
   Is the rank of each nonzero submodule of $H_d^2$ that has infinite codimension in $H_d^2$
  infinite?
\end{Problem}
By providing a positive answer, problem \ref{problem3} was solved by Guo \cite{Gu}.
Notice that the finite defect problem also makes sense for general reproducing kernel Hilbert spaces.  The following result shows that the finite defect problem depends on the reproducing kernel. 
\begin{thm}\label{thm1.5}
  Let $\mathbf K$ ($\mathbb K$ for infinitely-many variable case) be a regular unitarily invariant CNP reproducing kernel on the open unit ball in a Hilbert space $\cal H$, and $\mathcal{M}$ be a nonzero submodule in $H_{\mathbf{K}}\otimes \mathbb C^N$ (or $H_{\mathbb K}\otimes \mathbb C^N$). Then
\begin{itemize}
\item[(1)] for $2\leq \dim{{\cal H}}<\infty$,
  \begin{itemize}
  \item[1.] in the case $\{n: b_n>0\}$ is a finite set, $\mathcal{M}$ is of finite rank if and only if there is a subspace $F$ in $\mathbb C^N$, such that  $\cal M$ is of finite codimension in $H_{\mathbf K}\otimes F$;
  \item[2.] in the case $\{n: b_n>0\}$ is an infinite set, $\mathcal{M}$ is of finite rank if and only if there is a subspace $F$ in $\mathbb C^N$ such that ${\cal M}=H_{\mathbf{K}}\otimes F$;
\end{itemize}
\item[(2)] for $\dim {\cal H}=\infty$, $\Delta_{\cal M}$ is compact if and only if there is a subspace $F$ in $\mathbb C^N$ such that ${\cal M}=H_{\mathbb{K}}\otimes F$, which is equivalent to $\Delta_{\cal M}$ being of finite rank. 
\end{itemize}
\end{thm}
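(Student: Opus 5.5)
The plan is to compute the defect operator of a submodule explicitly and reduce the finite-rank question to the structure of the Gram matrix of $\mathcal M$. For a submodule $\mathcal M$ of $H_{\mathbf K}\otimes\mathbb C^N$ with restricted tuple $T_i=(M_{z_i}\otimes I)|_{\mathcal M}$, we have $T^\alpha (T^\alpha)^*=P_{\mathcal M}M_{z^\alpha}P_{\mathcal M}M_{z^\alpha}^*P_{\mathcal M}$. Hence
\[
\Delta_{\mathcal M}^2=P_{\mathcal M}-\sum_{\alpha\in\mathbb N_+^d}b_\alpha P_{\mathcal M}M_{z^\alpha}P_{\mathcal M}M_{z^\alpha}^*P_{\mathcal M}.
\]
On the ambient space we have $\sum_\alpha b_\alpha M_{z^\alpha}M_{z^\alpha}^*=I-E_0$, where $E_0$ is the projection onto constants (this is (\ref{1.a intro}) together with (\ref{sum bn})). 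Writing $P_{\mathcal M}=I-P_{\mathcal M^\perp}$, we get
\[
\Delta_{\mathcal M}^2=P_{\mathcal M}E_0P_{\mathcal M}+\sum_{\alpha} b_\alpha P_{\mathcal M}M_{z^\alpha}P_{\mathcal M^\perp}M_{z^\alpha}^*P_{\mathcal M}.
\]
Both terms are positive, and the first has rank at most $N$. So $\mathcal M$ has finite rank (respectively compact defect) if and only if every operator $P_{\mathcal M}M_{z^\alpha}P_{\mathcal M^\perp}$ with $b_\alpha\neq0$ is finite rank (respectively compact), with a uniform finiteness in the finite-rank case. The sufficiency directions then follow directly.
\begin{itemize}
\item If $\mathcal M=H_{\mathbf K}\otimes F$, then $\mathcal M^\perp=H_{\mathbf K}\otimes F^\perp$ is reducing, so every cross term vanishes.
\item In case (1.1), suppose $\mathcal M$ has finite codimension in $H_{\mathbf K}\otimes F$. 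Then $P_{\mathcal M}M_{z^\alpha}P_{\mathcal M^\perp}$ is supported on the finite-dimensional space $(H_{\mathbf K}\otimes F)\ominus\mathcal M$. Only finitely many $\alpha$ have $b_\alpha\ne0$, so the sum is finite rank.
\end{itemize}

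For necessity, I would first reduce to the case where $\mathcal M$ is "full" in a subspace $F$. Take $F$ to be the span of values: $F=\{\,\text{closure of }\{f(z):f\in\mathcal M,z\in\mathbb B_d\}\}$, or better, the smallest $F$ with $\mathcal M\subset H_{\mathbf K}\otimes F$. Since the pieces orthogonal to $H_{\mathbf K}\otimes F$ contribute nothing, it suffices to show the following.
\begin{itemize}
\item In case (1.1), $(H_{\mathbf K}\otimes F)\ominus\mathcal M$ is finite dimensional.
\item In cases (1.2) and (2), this space is zero.
\end{itemize}
The key tool is a Guo-type argument, in the spirit of Guo's solution of Problem \ref{problem3}. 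The finite-rank hypothesis gives, for each $\alpha$ with $b_\alpha>0$, that $M_{z^\alpha}^*$ maps $\mathcal M$ into $\mathcal M^\perp$ only through a finite-rank piece. Hence $\mathcal M^\perp\cap(H_{\mathbf K}\otimes F)$ is "almost" invariant under $M_{z^\alpha}$. Combining with the fact that $\mathcal M^\perp$ is invariant under all $M_{z_i}^*$, I would show that a quotient $Q=(H_{\mathbf K}\otimes F)\ominus\mathcal M$ which is almost $M_{z^\alpha}$-invariant for $|\alpha|=n$ is finite dimensional. The mechanism is a growth/degree argument. Elements of $Q$ are killed by all but finitely many directions, so the compression $P_Q M_{z^\alpha}|_Q$ equals $M_{z^\alpha}|_Q$ up to finite rank. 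Iterating gives polynomial-like structure of $Q$, and then $Q$ consists of polynomial multiples of finitely many vectors. Fullness of $F$ together with $d\ge2$ forces finiteness: in two variables, the orthogonal complements behave like Hardy-type spaces that cannot be finite-rank perturbations of shift-invariant subspaces unless they are finite dimensional.

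For case (1.2), the infinitely many $n$ with $b_n>0$ force the above almost-invariance for arbitrarily high degrees $n$, with total rank bounded by $\operatorname{rank}\mathcal M$. A finite-dimensional nonzero $Q$, invariant under the backward shifts, contains a nonzero vector whose image under $P_{\mathcal M}M_{z^\alpha}$ is nonzero for all large $|\alpha|$ along many multi-indices. These images are linearly independent across distinct $\alpha$ of the same degree, and their number grows with $n$ when $d\ge2$. This contradicts finite rank, so $Q=0$. For (2), the same counting works with infinitely many variables even at a single degree $n$: infinitely many coordinate directions $z_i$ produce infinitely many almost-orthogonal vectors of non-vanishing norm, which contradicts compactness.

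The main obstacle is the necessity step in (1.1) and (1.2), namely proving that finite rank of the cross terms forces $Q$ to be finite dimensional. I would first try to establish it for $d=2$ via the Guo-style argument, using the relation $\sum b_\alpha M_{z^\alpha}M_{z^\alpha}^*=I-E_0$ to replace the Drury–Arveson identity.
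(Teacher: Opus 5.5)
There is a genuine gap: the necessity in (1.1), which carries the whole proof, is asserted rather than proved, and the argument offered for (2) fails in general.

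\textbf{Part (1).} Your formula for $\Delta_{\mathcal M}^2$ agrees with the paper's, and your sufficiency directions are fine. But statements such as ``iterating gives polynomial-like structure of $Q$'', ``$Q$ consists of polynomial multiples of finitely many vectors'' and ``Hardy-type spaces cannot be finite-rank perturbations of shift-invariant subspaces'' restate the conclusion. In the scalar Drury--Arveson case they \emph{are} Guo's theorem. The paper's proof rests on concrete ingredients that your sketch does not replace:
\begin{enumerate}
\item the criterion of Lemma~\ref{finite codim equal polynomial}: $\mathcal M$ has finite codimension in $H_{\mathbf K}\otimes F$ if and only if $\mathcal M\cap(\mathbb C[z_i]\otimes\mathbb C_\eta)\neq0$ for all $i$ and all $\eta\neq0$;
\item the splitting of $\mathcal M\ominus z_1\mathcal M$ into its $z_1$-independent part plus a finite-dimensional $\Lambda_1$, which uses $b_1=a_1>0$;
\item Guo's lemma \cite{Gu} that $\dim(\mathcal N\ominus z_i\mathcal N)=\infty$ for nonzero submodules when $d\ge2$, applied to every $\mathcal M_\eta=\overline{Q_\eta\mathcal M}$ to show the $z_1$-independent part reaches every direction $\eta$; this is where $d\ge2$ and your ``fullness'' of $F$ are actually used;
\item the iterated backward-shift construction of $h_j\in\mathcal M$ with $0\neq h_j(z_1,0)\in\mathbb C[z_1]\otimes\mathbb C_{v^{(f_j)}_{n_j}}$;
\item induction on $d$.
\end{enumerate}

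Your step for (1.2) is essentially sound \emph{once finite codimension is known}. The inequality $\Delta_{\mathcal M}^2\ge b_\alpha P_{\mathcal M}M_{z^\alpha}P_{\mathcal M^\perp}M_{z^\alpha}^*P_{\mathcal M}$ and Douglas' lemma put $P_{\mathcal M}(z^\alpha q)$ in $\operatorname{ran}\Delta_{\mathcal M}$ for every $q\in Q$ with $b_{|\alpha|}>0$. Since $P_{\mathcal M}$ restricted to $H_{\mathbf K}\otimes F$ has kernel $Q$, the infinitely many independent vectors $z^\alpha q$ force infinite rank. This is shorter than the paper's route, which never passes through finite codimension: Lemma~\ref{no polynomial} and Theorem~\ref{M=Hk} take a combination $q$ of the $z_1^{n_i}\otimes\eta$ with $\Delta_{\mathcal M}^2P_{\mathcal M}q=0$, show $P_{\mathcal M}q$ is a polynomial, and conclude $1\otimes\eta\in\mathcal M$. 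However, your route inherits the gap above in a harder setting. For $d\ge3$ the paper's induction uses finiteness of $\{n:b_n>0\}$ to pass finite rank to $\mathbf M_d$, so you would have to rerun it with only the degree-one commutators.

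\textbf{Part (2).} Here the proposal breaks down. Compactness of $\Delta_{\mathcal M}$ gives no finite-rank information, so nothing makes $Q$ finite dimensional. For infinite-dimensional $Q$ your ``infinitely many directions'' count fails. Take $\mathcal M=[z_1]$ in the infinitely-many-variable Drury--Arveson space. Then $\mathcal M^\perp=\overline{\operatorname{span}}\{z^\alpha:\alpha_1=0\}$ is invariant under every $M_{z_i}$ with $i\ge2$, so $P_{\mathcal M}M_{z_i}P_{\mathcal M^\perp}=0$ for $i\ge2$. Non-compactness comes instead from the single direction $z_1$, since $P_{\mathcal M^\perp}M_{z_1}^*P_{\mathcal M}(z_1z_i)=\tfrac12 z_i$ with $z_1z_i$ weakly null as $i\to\infty$.

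The paper handles this in two steps. First, Lemma~\ref{PM1 neq 0}: if $1\otimes\eta\perp\mathcal M$ but $\mathcal M\not\perp H_{\mathbb K}\otimes\mathbb C_\eta$, then one of two explicit weakly null sequences, $z_{n_i}z^{\alpha_0}\otimes\eta$ or $z_if$, witnesses non-compactness. Second, for $\eta\perp F=\{\zeta:1\otimes\zeta\in\mathcal M\}$, it uses the identity $\Delta_{\mathcal M}^2P_{\mathcal M}(z_i\otimes\eta)=P_{\mathcal M}(z_i\otimes\eta)-z_iP_{\mathcal M}(1\otimes\eta)\to0$ together with $a_1a_m\le a_{m+1}$. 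This forces $P_{\mathcal M}(1\otimes\eta)$ to be a constant in $F$, hence zero. An argument of this kind is indispensable, and your counting sketch does not supply one.
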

As a special case of Theorem \ref{thm1.5}, we have the following corollary.
\begin{cor}\label{cor1}
Let $\cal D$ be the Dirichlet module, then the only  nonzero submodule of Dirichlet module $\cal D$ with finite defect is the whole module $\cal D$. 
\end{cor}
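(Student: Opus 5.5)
The plan is to deduce Corollary \ref{cor1} from part (1), case 2 of Theorem \ref{thm1.5}, with $N=1$ and $\dim{\cal H}=d$. We regard the Dirichlet module $\mathcal D$ on $\mathbb B_d$, $d\ge 2$, as the reproducing kernel Hilbert space with kernel
$$
\mathbf K(z,w)=\frac{1}{\langle z,w\rangle}\log\frac{1}{1-\langle z,w\rangle}=\sum_{n=0}^\infty \frac{1}{n+1}\langle z,w\rangle^n ,
$$
so that $a_n=\frac{1}{n+1}$. This is the normalization used in the CNP literature.

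The first task is to check that $\mathbf K$ is a regular unitarily invariant CNP kernel in the sense of (I)--(III).
\begin{itemize}
\item Condition (II) holds because $a_n/a_{n+1}=(n+2)/(n+1)\to1$.
\item Condition (III) holds because $\sum a_n=\sum\frac1{n+1}=\infty$.
\item For condition (I) we need the coefficients $b_n$ of $1-1/\mathbf K$ to be nonnegative. The sequence $a_n=1/(n+1)$ is log-convex, since $a_n^2\le a_{n-1}a_{n+1}$ is equivalent to $(n+1)^2\ge n(n+2)$. By the Kaluza lemma, a log-convex sequence with $a_0=1$ has reciprocal series $1/\sum a_nt^n=1-\sum_{n\ge1}b_nt^n$ with all $b_n\ge0$.
\end{itemize}
Alternatively, one may simply cite the well-known fact that the Dirichlet kernel has the complete Nevanlinna--Pick property.

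The key step is to show that $\{n:b_n>0\}$ is infinite, so that case 2 rather than case 1 of Theorem \ref{thm1.5} applies. We argue by contradiction. Suppose only finitely many $b_n$ are nonzero. Then $1-\sum b_nt^n$ is a polynomial, so $\sum_n\frac{t^n}{n+1}=\frac{1}{t}\log\frac1{1-t}$ would be the reciprocal of a polynomial and hence a rational function. This is impossible, because $\log\frac{1}{1-t}$ has a logarithmic singularity at $t=1$, whereas a rational function has only poles. Hence $\{n:b_n>0\}$ is infinite.

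Part (1), case 2 of Theorem \ref{thm1.5} then gives the following: a nonzero submodule $\mathcal M\subset\mathcal D$ has finite rank if and only if $\mathcal M=\mathcal D\otimes F$ for some subspace $F\subset\mathbb C$. Since $\mathcal M\neq0$, we must have $F=\mathbb C$, and therefore $\mathcal M=\mathcal D$. Conversely, $\Delta_{\mathcal D}$ is the projection onto the constants, which has rank one, so $\mathcal D$ itself does have finite defect.

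The only delicate point is matching the normalization of the Dirichlet module to the kernel class considered in the paper. If the paper intends $d=1$, it should be noted that Theorem \ref{thm1.5} requires $\dim\mathcal H\ge2$, so the corollary is read for $d\ge2$. The verification of (I) via Kaluza's lemma and the irrationality argument above are routine.
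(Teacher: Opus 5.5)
Your proposal is correct and follows the paper's route: the paper states the corollary as a direct special case of Theorem \ref{thm1.5}(1), case 2, with $N=1$. Your checks of (I)--(III) via Kaluza's lemma, and of the infinitude of $\{n:b_n>0\}$ via the logarithmic singularity of $\frac{1}{t}\log\frac{1}{1-t}$, supply the hypothesis verifications that the paper leaves implicit; reading the corollary for $d\ge 2$ is consistent with the theorem's hypothesis.
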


\begin{rem}
\begin{itemize}
\item[1)] Guo proved that, in the scalar-valued case, a submodule $\mathcal M$ of the Drury--Arveson module has finite defect if and only if it has finite codimension \cite{Gu}, and that the same equivalence holds for polynomially generated submodules of the Hardy and weighted Bergman modules \cite{Gu2}. Comparing these results with  Corollary \ref{cor1}, even in the scalar-valued case, the finite defect problem for submodules in Dirichlet module is totally different. 
\item[2)] By the above theorem, in the infinitely-many-variable settings,  there is no nonzero proper submodule $\cal M$ in $H_{\mathbb K}$ such that $\Delta_{\cal M}$ is Schatten-$p$ class, which is totally different from the finitely-many-variable case. In fact, most known results show that $\Delta_{\cal M}$ belongs to Schatten-$p$ class for some $p>d$, details can be seen in the survey paper \cite{GWy}. 
\end{itemize}
\end{rem}

In Section 4, we will consider the rigidity problem of finite-codimensional submodules in vector-valued reproducing kernel Hilbert modules, which was solved in \cite{GC} for scalar-valued case by using Characteristic Space Theory developed in \cite{Guo1}. However, for the vector-valued case, Characteristic Space Theory does not work, and the problem for vector-valued case will be solved using commutative-algebraic methods. 

For $d\ge 2,$ Let $ \Omega \subset \mathbb{C}^d $ be a bounded domain, and let
$H_{\mathfrak{K}}$ be a reproducing kernel Hilbert space of holomorphic functions on
$ \Omega $. Assume that \(\mathcal P_d\) is dense in
\(H_{\mathfrak K}\), that the coordinate multipliers
\(M_{z_1},\ldots,M_{z_d}\) are bounded on \(H_{\mathfrak K}\), and that
$
        \mathfrak K(\lambda,\lambda)\longrightarrow\infty,
        \lambda\to\partial\Omega .
$
Then we have the following theorem.
\begin{thm}\label{thm:rigidity} Under the above assumptions, 
let \(\mathcal{M}_1,\mathcal{M}_2\subset H_{\mathfrak{K}}\otimes \mathbb C^N\) be finite-codimensional submodules. Suppose there exists
a unitary module isomorphism
$$
U:\mathcal{M}_1\to \mathcal{M}_2
$$
i.e. $U$ is unitary and
$$
U\bigl(M_{z_j}\otimes I|_{\mathcal{M}_1}\bigr)
=
\bigl(M_{z_j}\otimes I|_{\mathcal{M}_2}\bigr)U,
\quad j=1,\dots,d.
$$
Then there exists a constant unitary matrix $W\in M_N(\mathbb{C})$ such that $\mathcal{M}_2=(I_{H_{\mathfrak{K}}}\otimes W)\mathcal{M}_1$. In particular, for \(N=1\), $\mathcal{M}_1$ is unitarily equivalent to $\mathcal{M}_2$ if and only if $\mathcal{M}_1=\mathcal{M}_2$.
\end{thm}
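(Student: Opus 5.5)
The strategy is to show first that $U$ is given by multiplication by a holomorphic matrix-valued function on a dense set. Then use the finite codimension together with the boundary blow-up of the kernel to force that function to be a constant unitary matrix.

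\textbf{Step 1: $U$ is multiplication by a holomorphic matrix function.} Since $\mathcal M_i$ has finite codimension in $H_{\mathfrak K}\otimes\mathbb C^N$ and $\mathcal P_d$ is Noetherian, set
$$I_i=\{p\in\mathcal P_d:\ p\,(H_{\mathfrak K}\otimes\mathbb C^N)\subset \mathcal M_i\}.$$
I claim $I_i$ has finite codimension in $\mathcal P_d$. Indeed, $\mathcal P_d/I_i$ embeds into $\mathrm{End}\bigl((H_{\mathfrak K}\otimes\mathbb C^N)/\mathcal M_i\bigr)$ via the compressions of $M_{z_j}\otimes I$, and that space is finite dimensional. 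Put $I=I_1I_2$, which also has finite codimension, so $Z(I)$ is a finite set. Then $I\otimes\mathbb C^N\subset \mathcal M_1\cap\mathcal M_2$, where $I\otimes\mathbb C^N$ denotes the $\mathbb C^N$-valued polynomials with coordinates in $I$.

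Fix $q\in I$ with $q\neq0$. For each $e\in\mathbb C^N$, $U(q\otimes e)\in H_{\mathfrak K}\otimes\mathbb C^N$. Intertwining gives
$$U(pq\otimes e)=p\,U(q\otimes e)\quad\text{for all } p\in\mathcal P_d.$$
Let $G$ be the $N\times N$ matrix of holomorphic functions whose columns are $U(q\otimes e_k)$, and set $\Phi=G/q$, a meromorphic matrix function. Then $U(pq\otimes e)=\Phi\,(pq\otimes e)$, and more generally $U f=\Phi f$ for every $f$ in $q\,(\mathcal P_d\otimes\mathbb C^N)$. The same reasoning applies to $U^{*}=U^{-1}$, with a matrix $\Psi$, and then $\Psi\Phi=I$.

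Next, the closure of $q(\mathcal P_d\otimes\mathbb C^N)$ contains $q\,(H_{\mathfrak K}\otimes\mathbb C^N)$ by density of $\mathcal P_d$ and boundedness of $M_q$. Since $U$ is an isometry, $\|\Phi f\|=\|f\|$ on this dense part of $q(H_{\mathfrak K}\otimes\mathbb C^N)$.

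\textbf{Step 2: $\Phi$ is a constant unitary.} Use reproducing kernels. Let $K_\lambda\otimes e\in H_{\mathfrak K}\otimes\mathbb C^N$ denote the kernel at $\lambda$. The projection of $K_\lambda\otimes e$ onto $\mathcal M_i^\perp$ lies in a fixed finite-dimensional space $\mathcal M_i^\perp$, and its norm stays bounded. Normalize $k_\lambda=K_\lambda/\|K_\lambda\|$. Since $\|K_\lambda\|\to\infty$ as $\lambda\to\partial\Omega$, we get
$$\|P_{\mathcal M_i}(k_\lambda\otimes e)\|\to 1.$$
Then compute the Berezin-type quantity
$$\langle U^*(P_{\mathcal M_2}k_\lambda\otimes e),\,P_{\mathcal M_1}k_\lambda\otimes e'\rangle=\langle k_\lambda\otimes e,\,U(P_{\mathcal M_1}k_\lambda\otimes e')\rangle$$
up to an $o(1)$ error. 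Away from the finitely many zeros, this reduces to $\langle \Phi(\lambda)e',e\rangle$ times factors tending to $1$.

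Alternatively, and more robustly, apply the isometry to $M_{z_j}$-intertwining on $\mathcal M_1$. Compare
$$U^*(M_{\bar\varphi}\text{-compression})\quad\text{with}\quad \Phi(\lambda)^*\Phi(\lambda).$$
Using $\|\Phi f\|=\|f\|$ with $f=q\,k_\lambda\otimes e$ and the asymptotic multiplicativity $\|g k_\lambda\|\approx|g(\lambda)|$ near the boundary, this gives $|\Phi(\lambda)e|=|e|$ asymptotically on the boundary. Combined with the boundedness of $\Phi$ and $\Psi=\Phi^{-1}$ as multipliers, the function $\Phi^*\Phi-I$ tends to $0$ at $\partial\Omega$. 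Then a maximum principle argument applies. The entries of $\Phi$ are holomorphic off $Z(I)$, and the isolated singularities in dimension $d\ge2$ are removable by Hartogs. The function $\|\Phi(\lambda)e\|^2$ is plurisubharmonic, and its boundary values are $|e|^2$, so $\|\Phi e\|\le|e|$. The same holds for $\Psi$, so $\Phi(\lambda)$ is unitary for every $\lambda$. A holomorphic unitary-valued matrix function is constant, since all its entries then have constant row sums of $|\cdot|^2$, forcing constancy. Call this constant $W$.

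\textbf{Step 3: Conclusion.} We have $U=I\otimes W$ on the dense subspace, hence everywhere on $\mathcal M_1$, so $\mathcal M_2=U\mathcal M_1=(I\otimes W)\mathcal M_1$. For $N=1$, $W$ is a unimodular scalar, so $\mathcal M_2=\mathcal M_1$.

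\textbf{Main obstacle.} The hardest step is the boundary asymptotics $\|g k_\lambda\|\to|g(\lambda)|$ for polynomial $g$. This does not follow directly from the stated hypotheses. I would try to derive it from the finite-codimension structure: write $g-g(\lambda)=\sum_j (z_j-\lambda_j)h_j$ and use that the compressions of $M_{z_j}-\lambda_j$ have kernels at $\lambda$ that are small after normalization. If this fails, I would fall back to the commutative-algebra route. Under that route, $U$ maps $I\otimes\mathbb C^N$ into modules of the form $\Phi(I\otimes\mathbb C^N)$. Comparing supports and lengths of the finite-length quotients $\mathcal P_d^N/(\mathcal M_i\cap\mathcal P_d^N)$ would force the entries of $\Phi$ to be polynomial units, that is, constants.
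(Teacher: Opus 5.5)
The gap is in Step 2, at the point you flag yourself. The argument needs $\limsup_{\lambda\to\partial\Omega}\|\Phi(\lambda)\|\le 1$ and the same bound for $\Psi$, and none of your three routes establishes it as written.
\begin{itemize}
\item Your ``more robust'' route rests on $\|gk_\lambda\|\to|g(\lambda)|$. Since $M_g^*k_\lambda=\overline{g(\lambda)}k_\lambda$, this says the Berezin transform of $M_g^*M_g-M_gM_g^*$ vanishes at $\partial\Omega$. That is false under the theorem's hypotheses: $H^2(\mathbb D^2)$ satisfies (1)--(3), yet $\|z_1k_\lambda\|=1$ while $\lambda_1=0$ for $\lambda=(0,t)\to\partial\mathbb D^2$. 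You would also need it for the non-polynomial entries $U(q\otimes e_k)$ of $\Phi q$.
\item The asserted boundedness of $\Phi,\Psi$ ``as multipliers'' is unsupported.
\item The commutative-algebra fallback cannot succeed, because it never uses that $U$ is isometric. The matrix $\bigl(\begin{smallmatrix}1&z_1\\0&1\end{smallmatrix}\bigr)$ induces a bounded, non-unitary module automorphism of $H_{\mathfrak K}\otimes\mathbb C^2$. So supports and lengths cannot force the entries of $\Phi$ to be constants.
\item In your first sketch, the claim that $\|P_{\mathcal M_i^\perp}(\mathfrak K_\lambda\otimes e)\|$ stays bounded is not implied by the hypotheses. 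What is true, and sufficient, is $\varepsilon_i(\lambda):=\sup_{\|y\|=1}\|P_{\mathcal M_i^\perp}(k_\lambda\otimes y)\|^2\to0$. This holds because $k_\lambda\rightharpoonup0$ (test against polynomials, using that $\Omega$ is bounded and $\mathfrak K(\lambda,\lambda)\to\infty$, then use density) and $\dim\mathcal M_i^\perp<\infty$.
\end{itemize}

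The missing step is the exact identity the paper uses. For $f\in\mathcal M_1$,
\[
\langle f,U^*P_{\mathcal M_2}(\mathfrak K_\lambda\otimes x)\rangle=\langle\Phi(\lambda)f(\lambda),x\rangle,
\]
so $U^*P_{\mathcal M_2}(\mathfrak K_\lambda\otimes x)=P_{\mathcal M_1}(\mathfrak K_\lambda\otimes\Phi(\lambda)^*x)$. Since $U^*$ is unitary, the two sides have equal norms. The right-hand norm is at most $\mathfrak K(\lambda,\lambda)^{1/2}\|x\|$. The left-hand norm squared is at least $(1-\varepsilon_1(\lambda))\mathfrak K(\lambda,\lambda)\|\Phi(\lambda)^*x\|^2$. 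Hence $\limsup\|\Phi\|\le1$, and symmetrically $\limsup\|\Psi\|\le 1$.

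Your Berezin computation was one step away from this. It is exact, not ``up to $o(1)$''. It equals $\overline{\langle\Phi(\lambda)A_\lambda e',e\rangle}$, where $\langle A_\lambda e',y\rangle=\langle P_{\mathcal M_1}(k_\lambda\otimes e'),k_\lambda\otimes y\rangle$, so $\|A_\lambda-I\|\le\varepsilon_1(\lambda)$. Its modulus is at most $1$, so $\|\Phi(\lambda)\|\le\|A_\lambda^{-1}\|\to1$.

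Only these upper bounds are needed, not boundary values $|e|^2$. Your plurisubharmonic maximum principle, Hartogs and constancy arguments then finish the proof.

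The identity requires $(Uf)(\lambda)=\Phi(\lambda)f(\lambda)$ for every $f\in\mathcal M_1$. You only have it on $q(\mathcal P_d\otimes\mathbb C^N)$, which is not dense in $\mathcal M_1$; the same extension is needed for $\Psi\Phi=I$ and for Step 3.
\begin{itemize}
\item Get the extension from $U(qf)=q\,Uf$ and continuity of point evaluations.
\item Patch the quotients $G_q/q$ over all $q\in I$; they agree because $q'\,U(q\otimes e)=q\,U(q'\otimes e)$. This gives holomorphy off the finite set $Z(I)$ rather than off $Z(q)$.
\end{itemize}
With these repairs your Step 1 is valid. It is more direct than the paper's route through polynomial generation and the localization lemma, since $I\otimes\mathbb C^N\subset\mathcal M_1\cap\mathcal M_2$ with $\dim\mathcal P_d/I<\infty$ gives finiteness of the exceptional set at once.
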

By Theorem \ref{thm1.5}, in the case that $\{n, b_n>0\}$ is finite,  if ${\cal M}_1$ and ${\cal M}_2$ are unitarily equivalent, then ${\cal M}_1$ is of finite codimension if and only if so is ${\cal M}_2$. It follows that in this case, to get the rigidity result as in Theorem \ref{thm:rigidity},  we may only assume one of ${\cal M}_i$, $i=1,2$  being of finite codimension.

This paper is organized as follows.
Section 2 introduces the curvature invariants and the Euler characteristic relative to a unital subalgebra of the multiplier algebra. Using a local characterization of the Euler characteristic, we establish the GBC formula in the multiplier-algebra framework and prove the main result, Theorem \ref{thm1.4}.
 Section 3 gives a complete solution to the finite defect problem in the case of finitely many variables for regular unitarily invariant CNP reproducing kernel Hilbert spaces, proving Theorem \ref{thm1.5}(1). Section 4 uses techniques from commutative algebra to study the unitary-equivalence rigidity of finite-codimensional submodules of vector-valued reproducing kernel Hilbert spaces over bounded domains in $\mathbb{C}^d$, leading to Theorem \ref{thm:rigidity}. Section 5 gives a complete characterization of the finite defect problem in the infinitely-many-variable case for regular unitarily invariant CNP reproducing kernel Hilbert spaces, proving Theorem \ref{thm1.5}(2). Finally, Section 6 establishes a negative result for the geometric Arveson-Douglas conjecture in the infinitely-many-variable setting.

\noindent

\section{Arveson's GBC formula over the multiplier algebra}
In this section, we elaborate on the curvature invariant for $1/\mathbf{K}$-contractive Hilbert modules and prove Theorem \ref{thm1.4}. First, we need some preparations. The following proposition is taken from \cite[Corollary 4.2]{CNP character}, and we include it here for the reader’s convenience.
\begin{prop}\cite[Corollary 4.2]{CNP character}
  Let $\mathbb{T}$ be a $1/\mathbf{K}$-contraction on a Hilbert space $\mathcal{H}$. Then there is a unique contractive linear operator
 $$
 L: H_{\mathbf{K}}\otimes \overline{\operatorname{ran}\Delta_\mathcal{H}}\rightarrow \mathcal{H}
 $$
 satisfying
  $$
  L(p\otimes \zeta)= p(\mathbb{T})\Delta_\mathcal{H}\zeta,~\forall p\in \mathcal{P}_d,~\zeta \in \overline{\operatorname{ran}\Delta_\mathcal{H}}.
  $$
In particular, if $\mathbb{T}$ is pure, then $L$ is a coisometry.
\end{prop}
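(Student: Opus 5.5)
We define $L$ on the dense subspace $\mathcal{P}_d\otimes\overline{\operatorname{ran}\Delta_\mathcal{H}}$ by the formula in the statement, show it is contractive there, and then extend by continuity. Uniqueness is then automatic, because $\mathcal{P}_d$ is dense in $H_{\mathbf{K}}$ (the kernel is unitarily invariant with $a_n>0$).

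\textbf{Contractivity.} For a finite sum $\sum_j p_j\otimes\zeta_j$ we must show
$$\Bigl\|\sum_j p_j(\mathbb{T})\Delta_\mathcal{H}\zeta_j\Bigr\|^2\le\Bigl\|\sum_j p_j\otimes \zeta_j\Bigr\|^2 .$$
Equivalently, we estimate the adjoint. Since $\{\sqrt{a_\alpha}z^\alpha\}$ is an orthonormal basis of $H_{\mathbf{K}}$, the candidate adjoint is
$$L^*h=\sum_{\alpha\in\mathbb{N}^d}a_\alpha z^\alpha\otimes \Delta_\mathcal{H}(\mathbb{T}^\alpha)^*h ,$$
whose norm squared is
$$\sum_\alpha a_\alpha\|\Delta_\mathcal{H}(\mathbb{T}^\alpha)^*h\|^2=\Bigl\langle \sum_\alpha a_\alpha\mathbb{T}^\alpha\Delta_\mathcal{H}^2(\mathbb{T}^\alpha)^*h,h\Bigr\rangle .$$
The key step is to show that the partial sums of $\sum_\alpha a_\alpha \mathbb{T}^\alpha\Delta^2_\mathcal{H}(\mathbb{T}^\alpha)^*$ are bounded by $I$.

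\textbf{Key step.} Define the completely positive map $\phi(X)=\sum_{\beta\in\mathbb{N}_+^d}b_\beta\mathbb{T}^\beta X(\mathbb{T}^\beta)^*$, so that $\Delta_\mathcal{H}^2=I-\phi(I)$. The identity $\frac{1}{\mathbf{K}}\cdot\mathbf{K}=1$ gives, on coefficients, $a_\alpha=\sum_{0<\beta\le\alpha}b_\beta a_{\alpha-\beta}$ for $\alpha\neq0$. Here the multinomial weights are consistent with this convolution, since $\langle z,w\rangle^n=\sum_{|\alpha|=n}\frac{n!}{\alpha!}z^\alpha\bar w^\alpha$. From this identity we get $\sum_\alpha a_\alpha\mathbb{T}^\alpha X(\mathbb{T}^\alpha)^*=\sum_{k\ge0}\phi^k(X)$ formally. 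Hence
$$\sum_{k=0}^{n}\phi^k(I-\phi(I))=I-\phi^{n+1}(I)\le I,$$
which telescopes because each $\phi^{k}$ is positive. We still need to compare the truncation by degree $|\alpha|\le n$ with the truncation by $k\le n$. Because all terms are positive, every degree truncation is dominated by some truncation $\sum_{k\le m}$: a fixed degree $\alpha$ only appears in $\phi^k$ with $k\le|\alpha|$. Monotone convergence then shows the whole series converges strongly to $I-\lim_n\phi^{n}(I)\le I$. Consequently $L^*$ is a well-defined contraction, $L=(L^*)^*$ is a contraction, and $L(p\otimes\zeta)=p(\mathbb{T})\Delta_\mathcal{H}\zeta$ by computing $\langle L(z^\alpha\otimes\zeta),h\rangle=\langle z^\alpha\otimes\zeta, L^*h\rangle$.

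\textbf{Purity.} If $\mathbb{T}$ is pure, then by definition the series converges strongly to $I$, so $\|L^*h\|=\|h\|$. Thus $L^*$ is an isometry and $L$ is a coisometry.

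\textbf{Main obstacle.} The step I expect to be delicate is justifying the rearrangement $\sum_\alpha a_\alpha\mathbb{T}^\alpha X\mathbb{T}^{\alpha*}=\sum_k\phi^k(X)$, which requires the strong convergence of the series defining $\phi$ together with positivity. I would handle it by expanding $\phi^k(X)$ as $\sum b_{\beta_1}\cdots b_{\beta_k}\mathbb{T}^{\beta_1+\cdots+\beta_k}X(\cdot)^*$ and grouping the terms by the total multi-index $\alpha=\beta_1+\cdots+\beta_k$. The coefficient identity $a_\alpha=\sum_k\sum_{\beta_1+\cdots+\beta_k=\alpha}b_{\beta_1}\cdots b_{\beta_k}$ then follows from the expansion $\mathbf{K}=\sum_k(1-1/\mathbf{K})^k$. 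Since all terms are nonnegative operators, Tonelli-type rearrangement applies in the weak sense, and that is enough here.
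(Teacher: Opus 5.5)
The paper gives no proof of this proposition. It quotes Corollary 4.2 of Bhattacharyya--Jindal and uses it as a black box, so there is no in-paper argument to compare against.

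Your proposal is a correct, self-contained proof along the standard Arveson-type route. The main points check out:
\begin{itemize}
\item The series $L^*h=\sum_\alpha a_\alpha z^\alpha\otimes\Delta_{\mathcal H}(\mathbb T^\alpha)^*h$ is orthogonal, with $\|L^*h\|^2=\sum_\alpha a_\alpha\|\Delta_{\mathcal H}(\mathbb T^\alpha)^*h\|^2$.
\item The multi-index identity $a_\alpha=\sum_k\sum_{\beta_1+\cdots+\beta_k=\alpha}b_{\beta_1}\cdots b_{\beta_k}$ is valid. It is the coefficient of $z^\alpha\bar w^\alpha$ in $\mathbf K=\sum_k(1-1/\mathbf K)^k$, which converges because $|1-1/\mathbf K(z,w)|<1$.
\item The rearrangement you flag as the main obstacle is legitimate. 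At the level of quadratic forms every term $b_{\beta_1}\cdots b_{\beta_k}\|\,\cdot\,\|^2$ is nonnegative, so Tonelli applies.
\end{itemize}

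Two small comments. First, $\sum_{k\le n}\phi^k(\Delta_{\mathcal H}^2)=I-\phi^{n+1}(I)$ comes from additivity of $\phi^k$ applied to $I=\Delta_{\mathcal H}^2+\phi(I)$. Positivity of $\phi^{n+1}(I)$ is what turns this identity into the bound $\le I$; it does not produce the telescoping itself.

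Second, you could avoid iterating $\phi$ altogether by telescoping in degree. The one-step recursion $a_\gamma=\sum_{\alpha+\beta=\gamma,\ \beta\neq0}a_\alpha b_\beta$ for $\gamma\neq0$ gives directly
\[
\sum_{|\alpha|\le n}a_\alpha\mathbb T^\alpha\Delta_{\mathcal H}^2(\mathbb T^\alpha)^*=I-\sum_{|\gamma|>n}c^{(n)}_\gamma\,\mathbb T^\gamma(\mathbb T^\gamma)^*,\qquad c^{(n)}_\gamma=\sum_{\substack{\alpha+\beta=\gamma\\ |\alpha|\le n,\ \beta\neq0}}a_\alpha b_\beta\ge0,
\]
because for $1\le|\gamma|\le n$ the truncated coefficient equals $a_\gamma$. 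This yields the contractivity bound with only finitely many convergent positive series and no iterated expansion.

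The paper defines purity as strong convergence of this same series to $I$, so your coisometry step needs nothing further. Your route does additionally identify the limit as $I-\mathrm{SOT}\text{-}\lim\phi^n(I)$. That extra information is needed only to reconcile with the other common definition of purity, $\phi^n(I)\to0$.
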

   For a $1/\mathbf{K}$-contraction $\mathbb{T}$ on $\mathcal{H}$, define an operator 
   \begin{equation}\label{T_kd(z)}
   \mathbb{T}_{\mathbf{K}}(z)=\sum\limits_{\alpha\in\mathbb{N}_+^{d}}b_\alpha\bar{z}^\alpha T^\alpha,\, z\in \mathbb{B}_d.
   \end{equation}
  By (\ref{1/k contraction}), for $\zeta\in\mathcal{H}$, 
   $$
   \|\mathbb{T}_{\mathbf{K}}(z)\zeta\|^2=\left\|\sum\limits_{\alpha\in\mathbb{N}_+^{d}}b_\alpha\bar{z}^\alpha T_\alpha\zeta\right\|^2\leq \sum\limits_{\alpha\in\mathbb{N}_+^{d}}\left\|\sqrt{b_\alpha}\bar{z}^\alpha \zeta\right\|^2=\left(\sum\limits_{\alpha\in\mathbb{N}_+^{d}}b_\alpha|z^\alpha|^2\right)\left\| \zeta\right\|^2.
   $$
It follows that for $z\in\mathbb B_d$, $$\|\mathbb{T}_{\mathbf{K}}(z)\|^2\leq\sum\limits_{\alpha\in\mathbb{N}_+^{d}}b_\alpha|z^\alpha|^2\leq 1-{\frac{1}{K(z,z)}}<1,$$
 which implies that  $I-\mathbb{T}_{\mathbf{K}}(z)$ is invertible. Similar to (\ref{eq:F(z)}), define $F:\mathbb{B}_d\rightarrow B\left(\overline{\operatorname{ran}\Delta_\mathcal{H}}\right)$ by
$$
F(z)=\Delta_\mathcal{H}(I-\mathbb{T}_{\mathbf{K}}(z)^*)^{-1}(I-\mathbb{T}_{\mathbf{K}}(z))^{-1} \Delta_\mathcal{H}.
$$
By \cite[Theorem 4.11]{CNP character}, the same reasoning as \cite[Theorem 1.2 and Theorem A]{Arveson curvature} gives the following proposition and we omit the proof.
\begin{prop}
Let $\mathbb{T}$ be a $1/\mathbf{K}$-contraction of $\mathcal{H}$ of finite defect. Then there is a Hilbert space $E$ and an operator-valued holomorphic function $\Phi:\mathbb{B}_d\rightarrow B(E,\overline{\operatorname{ran}\Delta_\mathcal{H}})$ such that
  $$
  \|\mathbf{K}_z\|^{-2}F(z)=I-\Phi(z)\Phi(z)^*,~\forall z\in \mathbb{B}_d.
  $$  
Moreover, the radial limits $\lim\limits_{r\to 1^-}\|\mathbf{K}_{r\xi}\|^{-2} \operatorname{tr}(F(r\xi))$ exist almost everywhere on the boundary $\partial \mathbb B_d$ relative to $d\sigma_d$ on $\partial \mathbb{B}_{d}$.
\end{prop}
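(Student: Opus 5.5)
We prove the two assertions in turn, following the route of \cite[Theorem 1.2 and Theorem A]{Arveson curvature}. The function $\Phi$ comes from the minimal dilation and a transfer-function identity. The radial limits then come from a Fatou-type theorem for contractive multipliers.

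\emph{Step 1: the factorization.} Assume first that $\mathbb{T}$ is pure. Then the operator $L$ of the preceding proposition is a coisometry. Put $\mathcal{N}=\ker L$, a submodule of $H_{\mathbf{K}}\otimes\overline{\operatorname{ran}\Delta_\mathcal{H}}$ (finite-dimensional coefficients, since the defect is finite). Since $L(p\otimes\zeta)=p(\mathbb{T})\Delta_\mathcal{H}\zeta$, we get for $z\in\mathbb{B}_d$ and $\zeta\in\operatorname{ran}\Delta_\mathcal{H}$
$$
L^*\big((I-\mathbb{T}_{\mathbf{K}}(z))^{-1}\big)^*\text{-type identity:}\quad \Delta_\mathcal{H}(I-\mathbb{T}_{\mathbf{K}}(z)^*)^{-1}\eta=\text{evaluation of }L^*\eta\text{ at }z .
$$
More precisely, using $1/\mathbf{K}(z,w)=1-\sum_\alpha b_\alpha z^\alpha\bar w^\alpha$, one checks $(L^*\eta)(z)=\Delta_\mathcal{H}(I-\mathbb{T}_{\mathbf{K}}(z)^*)^{-1}\eta$ as a $\overline{\operatorname{ran}\Delta_\mathcal{H}}$-valued function. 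Consequently
$$
\langle F(z)\zeta,\zeta'\rangle=\langle P_{\mathcal{N}^\perp}(\mathbf{K}_z\otimes\zeta),\mathbf{K}_z\otimes\zeta'\rangle .
$$
Next apply the McCullough--Trent Beurling theorem for CNP spaces, which \cite[Theorem 4.11]{CNP character} supplies. It gives $P_\mathcal{N}=M_\Phi M_\Phi^*$ for a multiplier $\Phi\in \operatorname{Mult}(H_\mathbf{K}\otimes E,H_\mathbf{K}\otimes\overline{\operatorname{ran}\Delta_\mathcal{H}})$ that is a partial isometry. Since $M_\Phi^*(\mathbf{K}_z\otimes\zeta)=\mathbf{K}_z\otimes\Phi(z)^*\zeta$, we obtain
$$
F(z)=\|\mathbf{K}_z\|^2\big(I-\Phi(z)\Phi(z)^*\big).
$$

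\emph{Step 2: the non-pure case.} For a non-pure $\mathbb{T}$, $L$ is only a contraction. Here we replace $P_{\mathcal{N}^\perp}$ by $LL^*$. The operator $I-LL^*$ is again a positive contraction. Moreover $H_\mathbf{K}\otimes\overline{\operatorname{ran}\Delta_\mathcal{H}}\ominus$ its range is invariant in the appropriate sense, since $L$ intertwines the shifts with $\mathbb{T}$. The CNP commutant lifting/factorization theorem (Leech-type) then still gives $I-LL^*=M_\Phi M_\Phi^*$ with $\|M_\Phi\|\le1$. This is the step I expect to need the most care: one must verify that $I-LL^*$ has the form $\sum$ of shift-compatible positive pieces, i.e.\ that $\mathbf{K}(z,w)^{-1}\langle (I-LL^*)(\mathbf{K}_w\otimes\zeta),\mathbf{K}_z\otimes\zeta'\rangle$ is a positive kernel. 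I would try to deduce this from the $1/\mathbf{K}$-contraction inequality exactly as in Arveson's $d$-contraction argument.

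\emph{Step 3: radial limits.} Each entry of $\Phi$ is a bounded holomorphic function on $\mathbb{B}_d$, because multipliers of $H_\mathbf{K}$ are contained in $H^\infty(\mathbb{B}_d)$ with $\|\phi\|_\infty\le\|M_\phi\|$. Since $\dim\overline{\operatorname{ran}\Delta_\mathcal{H}}<\infty$, we can write
$$
\operatorname{tr}\big(\Phi(z)\Phi(z)^*\big)=\sum_{i}\|\Phi(z)^*e_i\|^2 .
$$
Each $\Phi(\cdot)^*e_i$, for $e_i$ an orthonormal basis, is an $E$-valued bounded anti-holomorphic function of norm at most $1$. Vector-valued Fatou theorems on the ball give radial limits $\sigma_d$-a.e. when $E$ is separable; if $E$ is not separable, restrict to the separable closed span of the Taylor coefficients. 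Hence
$$
\|\mathbf{K}_{r\xi}\|^{-2}\operatorname{tr}F(r\xi)=\dim\operatorname{ran}\Delta_\mathcal{H}-\operatorname{tr}\big(\Phi(r\xi)\Phi(r\xi)^*\big)
$$
converges a.e., which proves the second assertion.
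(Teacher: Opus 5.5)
You correctly reduce the problem to a kernel positivity, but your Step 2 leaves that positivity unproved, and it is exactly what the non-pure case of the statement requires. Your Step 1 is sound: in the pure case, $L^*L=P_{\mathcal N^\perp}$ with $\mathcal N=\ker L$ a submodule, and McCullough--Trent applies (that theorem is \cite{trent}, not \cite[Theorem 4.11]{CNP character}). Step 3, Fatou's theorem for the bounded Hilbert-space-valued functions $z\mapsto\Phi(z)^*e_i$, is also fine.

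The operator to factor is $I-L^*L$ on $H_{\mathbf K}\otimes\overline{\operatorname{ran}\Delta_{\mathcal H}}$, not $I-LL^*$, which acts on $\mathcal H$. This is because $\langle F(z)\zeta,\zeta'\rangle=\langle L^*L(\mathbf K_z\otimes\zeta),\mathbf K_z\otimes\zeta'\rangle$. For non-pure $\mathbb T$, $L^*L$ need not be a projection, so there is no invariant subspace to which McCullough--Trent could apply. Nor does $I-L^*L$ commute with the $M_{z_j}\otimes I$, so no Leech or commutant-lifting theorem produces $\Phi$ either. Your reduction to positivity of $\mathbf K(z,w)^{-1}\langle (I-L^*L)(\mathbf K_w\otimes\zeta),\mathbf K_z\otimes\zeta'\rangle$ is correct, and a Kolmogorov factorization would then give a holomorphic $\Phi$. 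But "I would try to deduce this" is not a proof.

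The positivity follows from a lurking-isometry argument that uses only $\Delta_{\mathcal H}^2+\sum_\beta b_\beta T^\beta(T^\beta)^*=I$.
\begin{itemize}
\item Put $R=[\sqrt{b_\beta}\,T^\beta]_{\beta}:\bigoplus_\beta\mathcal H\to\mathcal H$ and $Z(z)h=(\sqrt{b_\beta}\,\bar z^\beta h)_\beta$. Then $RZ(z)=\mathbb T_{\mathbf K}(z)$ and $Z(w)^*Z(z)=(1-\mathbf K(w,z)^{-1})I$.
\item The row $[R\ \ \Delta_{\mathcal H}]$ is a coisometry. Complete it to a unitary $\left(\begin{smallmatrix}R&\Delta_{\mathcal H}\\ C&D\end{smallmatrix}\right):(\bigoplus_\beta\mathcal H)\oplus\overline{\operatorname{ran}\Delta_{\mathcal H}}\to\mathcal H\oplus E$.
\item Let $h_z=(I-\mathbb T_{\mathbf K}(z))^{-1}\Delta_{\mathcal H}\zeta$ and $h'_w=(I-\mathbb T_{\mathbf K}(w))^{-1}\Delta_{\mathcal H}\zeta'$. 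Then $h_z=RZ(z)h_z+\Delta_{\mathcal H}\zeta$, so the unitary sends $Z(z)h_z\oplus\zeta$ to $h_z\oplus\Psi(z)\zeta$, where $\Psi(z)=D+CZ(z)(I-RZ(z))^{-1}\Delta_{\mathcal H}$.
\item Comparing inner products gives $\langle\zeta,\zeta'\rangle-\mathbf K(w,z)^{-1}\langle h_z,h'_w\rangle=\langle\Psi(z)\zeta,\Psi(w)\zeta'\rangle$.
\end{itemize}
Hence $\Phi(z):=\Psi(z)^*$ is holomorphic and contractive, and $I-\Phi(z)\Phi(z)^*=\|\mathbf K_z\|^{-2}F(z)$ for every $1/\mathbf K$-contraction.

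This $\Phi$ is essentially the Bhattacharyya--Jindal characteristic function that the paper invokes through \cite[Theorem 4.11]{CNP character}. Using it, the pure/non-pure split and the detour through $L$ and McCullough--Trent are unnecessary, and your Step 3 then finishes the proof as in Arveson's Theorem A.
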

  Set $K_0(z)=\lim\limits_{r\to 1^-}\|\mathbf{K}_{rz}\|^{-2} \operatorname{tr}(F(rz))$ for $\sigma_d~ a.e. z\in \partial\mathbb{B}_d$. The curvature invariant of module $\mathcal{H}$ is defined by
$$
K(\mathcal{H})=\int_{\partial \mathbb B_d}K_0(z)d\sigma_d(z).
$$
By \cite[Theorem 4.11]{CNP Curvature}, 
\begin{equation}\label{2.a}
  K(\mathcal{H})=\operatorname{rank}\mathcal{H}-\operatorname{fd}(\operatorname{ran}\Phi(\lambda)),
\end{equation}
where for any subspace $V$ in $H_{\mathbf{K}}\otimes \mathbb C^N$, 
$$
\operatorname{fd}(V)=\sup\limits_{\lambda\in\mathbb B_d} \mathrm{dim} E_\lambda(V).
$$
Set 
$$
 \text{mp}(V)=\{\lambda \in \mathbb{B}_d: \operatorname{dim}E_{\lambda}V=\operatorname{fd}(V)\}.
$$
Now, for a submodule $\cal M$  in $H_{\mathbf{K}}\otimes \mathbb C^N$ set 
$$
n=\mathrm{dim}({\cal M}\cap (\mathbb C\otimes \mathbb C^N)),
$$
and choose an orthonormal basis $\{1\otimes e_i\}_{i=1}^{N-n}$ of  $\mathbb C\otimes\mathbb C^N\ominus({\cal M}\cap (\mathbb C\otimes\mathbb C^N))$.  By \cite[Lemma 2.1]{CNP character}, $I-\sum\limits_{\alpha\in \mathbb{N}_+^{d}}^{\infty}b_\alpha M_{z^\alpha}M_{z^\alpha}^*$ is the orthogonal projection from $H_{\mathbf{K}}$ onto the one-dimensional subspace of constant functions. Then applying the same reasoning as in \cite[Lemma 2.1 and Lemma 2.2]{curv} we have the following lemma.
\begin{lem}\label{dimdefect} Under the above assumptions,  
$\{P_{\mathcal{M}^\perp}(1 \otimes e_i)\}_{i=1}^{N-n} \text{ is a linear basis of } \operatorname{ran} \Delta_{\mathcal{M}^\perp}$, 
and
\begin{equation}\label{K=N-fd}
K(\mathcal{M}^\perp) = N - \operatorname{fd}(\mathcal{M}).
\end{equation}
\end{lem}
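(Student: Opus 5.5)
Throughout, $E_\lambda(V)=\{f(\lambda): f\in V\}\subset\mathbb C^N$ denotes the fibre of $V$ at $\lambda$. The lemma has two parts, and I would prove them in order. The second part will follow from the first together with (\ref{2.a}).

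\emph{Step 1: the basis of $\operatorname{ran}\Delta_{\mathcal M^\perp}$.} The compressions $S_{z^\alpha}$ satisfy $S_{z^\alpha}^*=(M_{z^\alpha}^*\otimes I)|_{\mathcal M^\perp}$, because $\mathcal M^\perp$ is co-invariant. Hence
$$\Delta_{\mathcal M^\perp}^2=P_{\mathcal M^\perp}\Big(\big(I-\sum_{\alpha\in\mathbb N^d_+}b_\alpha M_{z^\alpha}M_{z^\alpha}^*\big)\otimes I_{\mathbb C^N}\Big)\Big|_{\mathcal M^\perp}=P_{\mathcal M^\perp}(E_0\otimes I)|_{\mathcal M^\perp},$$
where $E_0$ is the projection onto the constants, by \cite[Lemma 2.1]{CNP character}. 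This gives
$$\operatorname{ran}\Delta_{\mathcal M^\perp}=\operatorname{ran}\Delta^2_{\mathcal M^\perp}=P_{\mathcal M^\perp}(\mathbb C\otimes\mathbb C^N),$$
and this space is finite dimensional, so the range is closed. It is spanned by the vectors $P_{\mathcal M^\perp}(1\otimes e)$, and $P_{\mathcal M^\perp}(1\otimes e)=0$ exactly when $1\otimes e\in\mathcal M$. Therefore the kernel of $P_{\mathcal M^\perp}$ restricted to $\mathbb C\otimes\mathbb C^N$ is $\mathcal M\cap(\mathbb C\otimes\mathbb C^N)$. The restriction of $P_{\mathcal M^\perp}$ to the orthogonal complement of this kernel is injective, so the images $P_{\mathcal M^\perp}(1\otimes e_i)$, $1\le i\le N-n$, of the chosen orthonormal basis form a linear basis. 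In particular $\operatorname{rank}\mathcal M^\perp=N-n$.

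\emph{Step 2: identify $\operatorname{fd}(\operatorname{ran}\Phi)$ with $\operatorname{fd}(\mathcal M)-n$.} Use the transfer-function description from the previous proposition, following \cite[Lemma 2.2]{curv}. Let $L: H_{\mathbf K}\otimes\operatorname{ran}\Delta_{\mathcal M^\perp}\to\mathcal M^\perp$ be the coisometry of \cite[Corollary 4.2]{CNP character}. Since $\mathcal M^\perp$ is pure, $\ker L$ is a submodule, and $\Phi$ is the multiplier whose range is $\ker L$ (McCullough–Trent / Beurling-type theorem for CNP kernels). Identify $\operatorname{ran}\Delta_{\mathcal M^\perp}$ with $\mathbb C^{N-n}\cong\operatorname{span}\{e_i\}$ via the basis of Step 1. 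Then $L(p\otimes\zeta)=P_{\mathcal M^\perp}(p\otimes\zeta)$, so $L$ is essentially compression onto $\mathcal M^\perp$, after correcting by the invertible matrix coming from $\Delta_{\mathcal M^\perp}$. Consequently $\ker L$ corresponds to $\mathcal M\cap (H_{\mathbf K}\otimes\operatorname{span}\{e_i\})$, modulo that invertible change of coordinates, and we also have
$$\mathcal M=\big(\mathcal M\cap(H_{\mathbf K}\otimes \operatorname{span}\{e_i\})\big)\oplus\big(H_{\mathbf K}\otimes(\mathcal M\cap\mathbb C^N)\big)$$
at the level of fibres. Since $E_\lambda$ of the second summand is all of $\mathcal M\cap\mathbb C^N$, of dimension $n$, comparing fibres gives
$$\operatorname{fd}(\operatorname{ran}\Phi)=\operatorname{fd}(\mathcal M)-n.$$
Substituting into (\ref{2.a}) yields $K(\mathcal M^\perp)=(N-n)-(\operatorname{fd}\mathcal M-n)=N-\operatorname{fd}(\mathcal M)$.

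The main obstacle is Step 2. One has to check carefully that the fibre dimension of $\ker L$ really equals that of $\mathcal M$ minus $n$, since $\mathcal M$ need not split as a direct sum. I would handle this by noting that $H_{\mathbf K}\otimes(\mathcal M\cap\mathbb C^N)\subset\mathcal M$ is reducing. Splitting $\mathbb C^N=(\mathcal M\cap\mathbb C^N)\oplus F'$ then reduces the problem to the case $n=0$, where $L$ becomes a unitary twist of $P_{\mathcal M^\perp}$ and $\ker L\cong\mathcal M$ exactly, via the invertible matrix $\Delta$ composed with the basis map.
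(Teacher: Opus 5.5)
Your proposal is correct and follows essentially the route the paper intends (the paper only cites \cite{curv}, Lemmas 2.1--2.2): you get the basis from $\Delta_{\mathcal M^\perp}^2=P_{\mathcal M^\perp}(E_0\otimes I)|_{\mathcal M^\perp}$, and then apply (\ref{2.a}) with $\operatorname{fd}(\ker L)=\operatorname{fd}(\mathcal M)-n$. Two small points: with $G=\{\xi\in\mathbb C^N:1\otimes\xi\in\mathcal M\}$, the module $\mathcal M$ always splits orthogonally as $(H_{\mathbf K}\otimes G)\oplus\bigl(\mathcal M\cap(H_{\mathbf K}\otimes G^\perp)\bigr)$, because $H_{\mathbf K}\otimes G\subset\mathcal M$ is reducing, so your worry that it ``need not split'' is unfounded; and the identity that actually drives Step 2, which you should state explicitly, is $L\bigl(p\otimes\Delta_{\mathcal M^\perp}^{-1}P_{\mathcal M^\perp}(1\otimes\xi)\bigr)=P_{\mathcal M^\perp}(p\otimes\xi)$ for $\xi\in G^\perp$, with $\Delta_{\mathcal M^\perp}^{-1}$ taken on $\operatorname{ran}\Delta_{\mathcal M^\perp}$.
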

To distinguish between the linear independence of a subset in a module and the linear independence of a subset in a vector space, we provide the following notation. 
\begin{Notation}
Let $\mathcal{A}$ be a module over an integral domain $\mathfrak{R},$ a finite subset $\{a_i\}_{i=1}^m\subseteq \mathcal{A}$ is called linearly independent {\it in the module $\mathcal{A},$} if
$$\sum\limits_{i=1}^mp_ia_i=0, \quad p_i \in \mathfrak{R}$$ implies that $p_i=0.$ A subset $\mathfrak{S}$ is a basis of the module $\mathcal{A}$ if $\mathfrak{S}$ is linearly independent in the module $\mathcal{A},$ and generates $\mathcal{A}.$
\end{Notation}
Let $F$ be a finitely generated module over the integral domain $\mathfrak{ R}$, let $\mathfrak{S}$ be a generating set of $F$. It is an exercise that the number of elements in any maximal linearly independent subset of $\mathfrak{S}$ in the module F is $\operatorname{rank}(F)$, regardless of the choice of $\mathfrak{S}$. The key to prove Theorem \ref{thm1.4} is the technique of characterizing the Euler characteristic from a local perspective. We have the following theorem.
\begin{thm}\label{thm 1.2} 
Let $\cal R$ be any unital subalgebra of $\operatorname{Mult}(H_{\mathbf{K}})$.
Then for any submodule $\cal M$ in $H_{\mathbf{K}}\otimes \mathbb{C}^N$, the following statements are equivalent
\begin{itemize}
     \item[(1)] $\chi_{\cal R}(\mathcal{M}^\perp)=l$,
     \item[(2)] there is an open set $U$ that is dense in $\mathbb{B}_d$, such that for all $\lambda\in U$,
  $$
  \dim \{E_\lambda(\mathcal{M}\cap ({\cal R}\otimes \mathbb{C}^N))\}=N-l,
  $$
  where for any subspace $V$ in $H_{\mathbf{K}}\otimes \mathbb{C}^N$,  
  $$
  E_\lambda(V)=\{f(\lambda): f\in V\}.
  $$
\end{itemize}
\end{thm}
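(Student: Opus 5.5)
Our plan is to identify $\mathbb{M}_{\mathcal{M}^\perp,\mathcal R}$ concretely as an $\mathcal R$-module and then compute its rank through evaluations at points. By Lemma \ref{dimdefect}, $\operatorname{ran}\Delta_{\mathcal{M}^\perp}$ is spanned by $P_{\mathcal{M}^\perp}(1\otimes e_i)$, $i=1,\dots,N$ (the vectors $1\otimes e_i$ lying in $\mathcal M$ contribute $0$). For $f\in\mathcal R\subset\operatorname{Mult}(H_{\mathbf K})$, compressing multipliers to the co-invariant subspace gives $f(S_{z_1},\dots,S_{z_d})P_{\mathcal{M}^\perp}=P_{\mathcal{M}^\perp}(M_f\otimes I)$ on $\mathcal M^\perp$, and also on all of $H_{\mathbf K}\otimes\mathbb C^N$ since $\mathcal M$ is invariant under $M_f\otimes I$. (This needs invariance of $\mathcal M$ under all multipliers; we get it from polynomial invariance via weak-$*$ density of polynomials in the multiplier algebra, using that the kernel is regular.) Hence
$$\mathbb{M}_{\mathcal{M}^\perp,\mathcal R}=\Big\{P_{\mathcal{M}^\perp}\Big(\sum_i f_i\otimes e_i\Big): f_i\in\mathcal R\Big\}=P_{\mathcal{M}^\perp}(\mathcal R\otimes\mathbb C^N).$$
So the $\mathcal R$-module map $\pi:\mathcal R^N\to \mathbb{M}_{\mathcal{M}^\perp,\mathcal R}$, $(f_i)\mapsto P_{\mathcal M^\perp}(\sum f_i\otimes e_i)$, is surjective with kernel $\mathcal{M}\cap(\mathcal R\otimes\mathbb C^N)=:\mathcal N$. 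Thus $\mathbb{M}_{\mathcal{M}^\perp,\mathcal R}\cong\mathcal R^N/\mathcal N$.

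Next we compute ranks over the integral domain $\mathcal R$, which consists of holomorphic functions on the connected set $\mathbb B_d$. Additivity of rank, which holds after tensoring with the fraction field $Q$ of $\mathcal R$, gives $\chi_{\mathcal R}(\mathcal M^\perp)=N-\operatorname{rank}\mathcal N$. It remains to show that $\operatorname{rank}\mathcal N=\max_\lambda\dim E_\lambda(\mathcal N)=:r$ and that this maximum is attained on an open dense set.

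\textbf{Step 1: attainment on an open dense set.} Choose $g^1,\dots,g^r\in\mathcal N$ and $\lambda_0$ such that the vectors $g^j(\lambda_0)$ are independent. Some $r\times r$ minor $D(\lambda)$ of the $N\times r$ matrix $[g^1(\lambda),\dots,g^r(\lambda)]$ is then a holomorphic function that is not identically $0$. Its zero set is closed and nowhere dense, so $U=\{D\neq0\}$ is open and dense. On $U$ we have $\dim E_\lambda(\mathcal N)\ge r$, and the reverse inequality holds by maximality, so equality holds on $U$.

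\textbf{Step 2: $\operatorname{rank}\mathcal N=r$.} First, the $g^j$ are $\mathcal R$-independent: if $\sum p_jg^j=0$, then $p_j=0$ on $U$ by Cramer's rule, hence $p_j=0$ identically. Second, any $r+1$ elements $h^0,\dots,h^r\in\mathcal N$ are dependent. The $N\times(r+1)$ matrix they form has all $(r+1)$-minors identically $0$, since its pointwise rank never exceeds $r$. So its rank over $Q$ is at most $r$, and a nontrivial $Q$-relation can be cleared of denominators to give a nontrivial $\mathcal R$-relation. Therefore $\operatorname{rank}\mathcal N=r$.

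\textbf{The equivalence.} Combining the two steps, $\chi_{\mathcal R}(\mathcal M^\perp)=N-r$. For (1)$\Rightarrow$(2), take the $U$ from Step 1. For (2)$\Rightarrow$(1), if $\dim E_\lambda(\mathcal N)=N-l$ on an open dense $U$, this number must equal the generic maximum $r$: $U$ meets the dense open set from Step 1 where the maximum $r$ is attained. Hence $l=N-r=\chi_{\mathcal R}$.

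We expect the main obstacle to be the identification of $\mathbb M_{\mathcal M^\perp,\mathcal R}$, specifically showing that $f(S)$ equals the compression of $M_f\otimes I$ and that $\mathcal M$ is $\operatorname{Mult}$-invariant for non-polynomial $f$. Our first attempt will be to define $f(S)$ through the functional calculus of the pure $1/\mathbf K$-contraction and to use the dilation $L$ from the proposition above.
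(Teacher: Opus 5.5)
Your argument is correct, but it is organized differently from the paper's.

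\textbf{Your route.} You make the exact sequence $0\to\mathcal N\to\mathcal R^N\to\mathbb M_{\mathcal M^\perp,\mathcal R}\to 0$ explicit and use additivity of rank over the fraction field $Q$. This reduces everything to showing that $\operatorname{rank}_{\mathcal R}\mathcal N$ equals the generic fibre dimension $r=\max_\lambda\dim E_\lambda(\mathcal N)$. You then get $\chi_{\mathcal R}(\mathcal M^\perp)=N-\operatorname{fd}(\mathcal M\cap(\mathcal R\otimes\mathbb C^N))$ directly. Both implications follow at once, and this is exactly the formula the paper later invokes in the proof of Theorem \ref{AH chis}.

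\textbf{The paper's route.} The paper never passes to $Q$ and works inside $\mathbb M_{\mathcal M^\perp,\mathcal R}$ with the generators $P_{\mathcal M^\perp}(1\otimes e_i)$.
\begin{itemize}
\item It picks a maximal independent subfamily of size $l$. The dependence of each remaining generator gives an element $q_i\in\mathcal N$ with nonzero coefficient $p_{l+1,i}$.
\item It takes $U$ to be the complement of the zero sets of these coefficients, which gives $\dim E_\lambda(\mathcal N)\ge N-l$ on $U$.
\item The reverse inequality is by contradiction. From $N-l+1$ elements of $\mathcal N$ with a nonvanishing minor, it builds explicit relations among any $l$ generators via cofactor expansions, in two cases according to whether some cofactor $r_{i_01}$ vanishes.
\end{itemize}
That cofactor construction is a hands-on version of your step ``take a $Q$-relation and clear denominators.'' The paper's route avoids localization but is longer and case-based. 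Yours is shorter and makes the underlying invariant, the generic fibre dimension, visible from the start.

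\textbf{On the obstacle you flag.} The paper uses the same identification $f(S)P_{\mathcal M^\perp}(1\otimes e)=P_{\mathcal M^\perp}(f\otimes e)$ without comment; it is what puts the $q_i$ in $\mathcal M$. Your justification via norm-bounded Fej\'er means of the homogeneous expansion is adequate. It only uses circular symmetry of $\mathbf K$, not regularity.
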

\begin{proof}
  $(1)\Rightarrow(2)$: Set $$n=\dim \mathcal{M}\cap(\mathbb{C}\otimes\mathbb{C}^N).$$ By Lemma \ref{dimdefect}, $\{P_{\mathcal{M}^\perp}(1\otimes e_i)\}_{i=1}^{N-n}$ generates $\mathbb{M}_{\mathcal{M}^\perp, \mathcal{R}}$. Since 
  $$
  \chi_\mathcal{R}(\mathcal{M}^\perp)=\operatorname{rank}(\mathbb M_{\cal{M}^\perp, {\cal R}})=l,
  $$
the number of elements in any maximal linearly independent subset of $\{P_{{\cal M}^\perp}(1\otimes e_i)\}_{i=1}^{N-n}$ in the module $\mathbb{M}_{{\cal M}^\perp, \mathcal{R}}$ is $l$. Without loss of generality, assume that $\{P_{\mathcal{M}^\perp}(1\otimes e_i)\}_{i=1}^l$ is a maximum linearly independent set of $\{P_{\mathcal{M}^\perp}(1\otimes e_i)\}_{i=1}^{N-n}$ in the module $\mathbb{M}_{ \mathcal{M}^\perp,\mathcal{R}}$. It follows from 
$$P_{\mathcal{M}^\perp}(1\otimes e_i)=0, \text{ for }N-n+1\leq i\leq N$$ 
that $\{P_{\mathcal{M}^\perp}(1\otimes e_i)\}_{i=1}^{l}$ is a maximum linearly independent set of $\{P_{\mathcal{M}^\perp}(1\otimes e_i)\}_{i=1}^{N}$  in the module $\mathbb{M}_{\mathcal{M}^\perp, \mathcal{R}}$. Then for each $l+1\leq i\leq N$, there exist $p_{1,i},\cdots,p_{l,i},p_{l+1,i}\in \mathcal{R}$ which are not all zero, such that
  $$
  q_i=p_{1,i}\otimes e_1 +\cdots +p_{l,i}\otimes e_l +p_{l+1,i}\otimes e_i\in \mathcal{M}.
  $$
 Since $\{P_{\mathcal{M}^\perp}(1\otimes e_i)\}_{i=1}^{l}$ is linearly independent in the module $\mathbb{M}_{ \mathcal{M}^\perp,\mathcal{R}}$, we have
  $$p_{l+1,i}\neq 0, \text{ for } l+1\leq i\leq N.$$ 
  Set
  $$
  U=\mathbb{B}_d\setminus\mathop{\cup}\limits_{i=l+1}^{N} Z(p_{l+1,i}),
  $$
  which is dense in $\mathbb{B}_d$. And for all $\lambda\in U$, $q_{l+1}(\lambda),\cdots,q_N(\lambda)$ is linearly independent. Hence 
  $$
  \dim \{E_\lambda(\mathcal{M}\cap (\mathcal{R}\otimes \mathbb{C}^N))\}\geq N-l,~\forall \lambda\in U.
  $$
  Next we claim that $\dim \{E_\lambda(\mathcal{M}\cap (\mathcal{R}\otimes \mathbb{C}^N))\}\leq N-l$ for all $\lambda\in U$. Otherwise there is a $\lambda_0\in U$ such that
  $$
  \dim \{E_{\lambda_0}({\cal M}\cap (\mathcal{R}\otimes \mathbb{C}^N))\}\geq N-l+1.
  $$
  Then there exist $h_1,\cdots,h_{N-l+1}\in \mathcal{M}\cap (\mathcal{R} \otimes \mathbb{C}^N)$ such that 
  $$
  \dim \operatorname{span}\{h_1(\lambda_0),\cdots,h_{N-l+1}(\lambda_0)\}=N-l+1.
  $$
  Write $h_i=(h_{i,1},\cdots,h_{i,N})$ for $1\leq i \leq N-l+1$. Hence
   \begin{eqnarray}\label{eq:2}
\operatorname{rank} \left[
                 \begin{array}{cccc}
                   h_{1,1}(\lambda_{0}) & h_{1,2}(\lambda_{0}) & \cdots & h_{1,N}(\lambda_{0}) \\
                   h_{2,1}(\lambda_{0}) &h_{2,2}(\lambda_{0}) & \cdots &h_{2,N}(\lambda_{0}) \\
                   \vdots & \vdots & \ddots & \vdots \\
                    h_{N-l+1,1}(\lambda_{0}) & h_{N-l+1,2}(\lambda_{0}) & \cdots & h_{N-l+1,N}(\lambda_{0})\\
                 \end{array}
               \right]=N-l+1.
\end{eqnarray}
Thus there exist $1\leq i_1<\cdots<i_{N-l+1}\leq N$ such that the determinant
  \begin{eqnarray}\label{eq:3}
 \left|
                 \begin{array}{cccc}
                   h_{1,i_1}(\lambda_{0}) &h_{1,i_2}(\lambda_{0}) & \cdots & h_{1,i_{N-l+1}}(\lambda_{0}) \\
                   h_{2,i_1}(\lambda_{0}) &h_{2,i_2}(\lambda_{0}) & \cdots &h_{2,i_{N-l+1}}(\lambda_{0}) \\
                   \vdots & \vdots & \ddots & \vdots \\
                    h_{N-l+1,i_{1}}(\lambda_{0}) & h_{N-l+1,i_2}(\lambda_{0}) & \cdots & h_{N-l+1,i_{N-l+1}}(\lambda_{0})\\
                 \end{array}
               \right|\neq 0.
\end{eqnarray}
 which implies that the determinant
  \begin{eqnarray}\label{eq:4}
 \left|
                 \begin{array}{cccc}
                   h_{1,i_1} & h_{1,i_2} & \cdots & h_{1,i_{N-l+1}} \\
                   h_{2,i_1} &h_{2,i_2} & \cdots &h_{2,i_{N-l+1}} \\
                   \vdots & \vdots & \ddots & \vdots \\
                    h_{{N-l+1},i_1} & h_{{N-l+1},i_2} & \cdots & h_{{N-l+1},i_{N-l+1}}\\
                 \end{array}
               \right|\neq 0.
\end{eqnarray}
We claim that $\operatorname{rank}\mathbb{M}_{\mathcal{M}^\perp,{\cal R}}\leq l-1$, which contradicts that 
$$
\operatorname{rank}\mathbb{M}_{\mathcal{M}^\perp,{\cal R}}=\chi_{\cal R}(\mathcal{M}^\perp)=l.
$$
In fact, by Lemma \ref{dimdefect}, it suffices to show that
for any $1\leq m_1<\cdots<m_{l}\leq N,$  $\{{P_{\mathcal{M}^\bot}(1\otimes e_{m_i})}\}_{i=1}^{l}$ is
  linearly dependent in the module $\mathbb{M}_{\mathcal{M}^\bot,{\cal R}}$. Set
$$
\{v_1,\cdots,v_{N-l}\}=\{1,\cdots,N\}\setminus \{m_1,\cdots,m_{l}\},
$$ 
and
 \begin{eqnarray}\label{eq:6}
 C^{(k)}=\left[
                 \begin{array}{cccc}
                   h_{1,k} & h_{1,v_1} & \cdots & h_{1,v_{N-l}}\\
                   h_{2,k} &h_{2,v_1} & \cdots &h_{2,v_{N-l}} \\
                   \vdots & \vdots & \ddots & \vdots \\
                    h_{{N-l+1},k} & h_{{N-l+1},v_1} & \cdots & h_{{N-l+1},v_{N-l}}\\
                 \end{array}
               \right], \quad 1\leq k\leq N.
\end{eqnarray}
Let $C^{(k)}_{ij}$ be the algebraic cofactor of the $(i,j)$ element in $C^{(k)}.$
Notice that for all $1\leq i \leq N-l+1,$ $$C^{(1)}_{i1}=C^{(2)}_{i1}=\cdots=C^{(N)}_{i1},$$ and write
 $r_{i1}=C^{(k)}_{i1}.$ Next, we claim that  $\{{P_{\mathcal{M}^\bot}(1\otimes e_{m_i})}\}_{i=1}^{l}$ is
  linearly dependent in the module $\mathbb{M}_{\mathcal{M}^\bot,\cal R} $, which will be proved in two cases.
  \begin{Case}
There exists an $i_0 \in \{1,\cdots,N-l+1\}$ such that $r_{i_01}=0.$
\end{Case}
There are $f_1, \cdots, f_{i_0-1}, f_{i_0+1}, \cdots, f_{N-l+1}\in \cal R$, which are not all zero, such that
 \begin{eqnarray}\label{eq:10}
\left(\sum\limits_{i\neq i_0} f_i h_{i,v_1},\cdots,\sum\limits_{i\neq i_0} f_i h_{i,v_{N-l}}\right)=\sum\limits_{i\neq i_0} f_i \left(h_{i,v_1}, \ldots, h_{i,v_{N-l}}\right)=0.
\end{eqnarray}
By \eqref{eq:4}, $\{h_1, \cdots, h_{i_0-1}, h_{i_0+1}, \cdots, h_{N-l+1}\}$ are linearly independent in the module ${\cal R}\otimes \mathbb{C}^N $, which gives that
 \begin{eqnarray}\label{eq:11}
\sum\limits_{1\leq i\leq N-l+1,i\neq i_0} f_i h_i \neq 0.
\end{eqnarray}
For simplicity, Write $\left(g_1, \cdots,g_j,\cdots, g_N\right)=\sum\limits_{i\neq i_0} f_i h_i.$
 By \eqref{eq:10},
$
g_{v_i}=0 ~\text{for all} ~1 \leq i \leq N-l,
$
and
hence by \eqref{eq:11}, $(g_{m_1},\cdots,g_{m_{l}})\neq0.$ Then
$$
\sum\limits_{i=1}^{l} g_{m_i}\otimes e_{m_i}=\sum\limits_{i=1}^Ng_i\otimes e_i=\sum\limits_{i\neq i_0} f_i h_i\in \mathcal{M}.
$$
 This ensures that
 $$\sum\limits_{i=1}^{l}g_{m_i}\cdot{P_{\mathcal{M}^\bot}(1\otimes e_{m_i})}={P_{\mathcal{M}^\bot}\left(\sum\limits_{i=1}^{l}g_{m_i}\otimes e_{m_i}\right)}=0.$$
 \begin{Case}
 For all $1 \leq i \leq N-l+1,$ $r_{i1}\neq 0.$
  \end{Case}
By \eqref{eq:4}, $\{h_i\}_{i=1}^{N-l+1}$ is linearly independent in the module $\cal R\otimes \mathbb{C}^N$,
hence
\begin{equation}\label{eq:8}
\begin{aligned}
\left (\left|C^{(1)}\right|,\cdots,\left|C^{(k)}\right|,\cdots,\left|C^{(N)}\right|\right)
&=\left(\sum\limits_{i=1}^{N-l+1}r_{i1}h_{i1},\cdots,\sum\limits_{i=1}^{N-l+1}r_{i1}h_{ik},\cdots,\sum\limits_{i=1}^{N-l+1}r_{i1}h_{iN}\right)\\
&=\sum\limits_{i=1}^{N-l+1}r_{i1}\left(h_{i1},\cdots,h_{ik},\cdots,h_{iN}\right)\\
&=\sum\limits_{i=1}^{N-l+1}r_{i1}h_i \neq 0,
\end{aligned}
\end{equation}
where $\left|C^{(i)}\right|$ is the determinant of $C^{(i)}$. By the definition of $C^{(k)},$
  \begin{eqnarray}\label{eq:9}
\left| C^{(v_1)}\right|=\cdots= \left|C^{(v_{N-l})}\right|=0.
\end{eqnarray}
Therefore $\left(\left|C^{(m_1)}\right|,\cdots,\left|C^{(m_{l})}\right|\right)\neq 0.$
 By \eqref{eq:8} and \eqref{eq:9},
$$\sum\limits_{i=1}^{l} \left|C^{(m_i)}\right|\otimes e_{m_i}=\sum\limits_{i=1}^N\left|C^{(i)}\right|\otimes e_i=\sum\limits_{i=1}^{N-l+1}r_{i1}h_i\in \mathcal{M}.$$
 This ensures that
 $$
 \sum\limits_{i=1}^{l}\left|C^{(m_i)}\right|\cdot{P_{\mathcal{M}^\bot}(1\otimes e_{m_i})}={P_{\mathcal{M}^\bot}
 \left(\sum\limits_{i=1}^{l}\left|C^{(m_i)}\right|\otimes e_{m_i}\right)}=0.
 $$
The claim is proved.

$(2)\Rightarrow(1)$: Suppose that $\chi_{\cal R}(\mathcal{M}^\perp)=k$. Then by $(1)\Rightarrow(2)$ there is an open subset $U_1\subseteq \mathbb{B}_d$, which is dense in $\mathbb{B}_d$, such that 
$$
  \dim \{E_\lambda(\mathcal{M}\cap ({\cal R}\otimes\mathbb{C}^N))\}=N-k,~\forall \lambda\in U_1.
$$
Notice that $U_1\cap U\neq \emptyset$, hence we have $k=l$. Thus $\chi(\mathcal{M}^\perp)=l$.
\end{proof}

From Theorem \ref{thm 1.2}, we have the following proposition.
\begin{prop}\label{3 equal}
Let $\mathcal{M}$ be a submodule of $H_{\mathbf{K}}\otimes \mathbb{C}^N$. Then the following statements are equivalent.
 \begin{itemize}
   \item [(1)] $K(\mathcal{M}^\perp)=\chi_\mathcal{R}(\mathcal{M}^\perp)$,
   \item [(2)] there exists an open set $U$ which is dense in $\mathbb{B}_d$ such that for all $\lambda\in U$,
 $$
 E_{\lambda}\mathcal{M}=E_{\lambda}(\mathcal{M}\cap ({\cal R}\otimes \mathbb{C}^N)),
 $$
   \item [(3)]  
   there exists $\lambda_0\in \operatorname{mp}(\mathcal{M})$ and $\{f_i\}_{i=1}^{m}\subset \mathcal{M}\cap(\mathcal{R}\otimes\mathbb{C}^N)$ such that
 $$
 E_{\lambda_0}\mathcal{M}= \operatorname{span}\{f_i(\lambda_0)|1\leq i\leq m\}.
 $$
 \end{itemize}
\end{prop}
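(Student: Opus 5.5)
We reduce everything to two numerical identities. By Lemma \ref{dimdefect}, $K(\mathcal{M}^\perp)=N-\operatorname{fd}(\mathcal{M})$. By Theorem \ref{thm 1.2}, if $\chi_{\mathcal R}(\mathcal{M}^\perp)=l$, there is an open dense $U_1\subset\mathbb B_d$ with $\dim E_\lambda(\mathcal{M}\cap(\mathcal R\otimes\mathbb C^N))=N-l$ for $\lambda\in U_1$. We also need the analogous fact for $\mathcal M$ itself: $\operatorname{mp}(\mathcal M)$ is open and dense in $\mathbb B_d$. Openness holds because if $f_1(\lambda_0),\dots,f_k(\lambda_0)$ are independent with $k=\operatorname{fd}(\mathcal M)$, some $k\times k$ minor of the holomorphic matrix $[f_i(\lambda)]$ is nonzero at $\lambda_0$, hence nearby. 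Since $\dim E_\lambda\mathcal M\le \operatorname{fd}(\mathcal M)$ everywhere, this gives equality on a neighborhood. Density holds because that minor is a nonzero holomorphic function on the connected set $\mathbb B_d$, so its zero set is nowhere dense; off it, $\dim E_\lambda\mathcal M\ge k$. In particular $\operatorname{mp}(\mathcal M)$ contains the complement of a proper analytic subset. Finally, $E_\lambda(\mathcal M\cap(\mathcal R\otimes\mathbb C^N))\subseteq E_\lambda\mathcal M$ for every $\lambda$.

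For (1)$\Rightarrow$(2), assume $N-\operatorname{fd}(\mathcal M)=l$. On the open dense set $U=U_1\cap\operatorname{mp}(\mathcal M)$, the subspace $E_\lambda(\mathcal{M}\cap(\mathcal R\otimes\mathbb C^N))$ has dimension $N-l=\operatorname{fd}(\mathcal M)=\dim E_\lambda\mathcal M$. It sits inside $E_\lambda\mathcal M$, so the two are equal.

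For (2)$\Rightarrow$(3), note that $U\cap\operatorname{mp}(\mathcal M)\neq\emptyset$ because both sets are open and dense. Pick $\lambda_0$ there. The space $E_{\lambda_0}\mathcal M=E_{\lambda_0}(\mathcal M\cap(\mathcal R\otimes\mathbb C^N))$ is finite dimensional, so it is spanned by finitely many values $f_i(\lambda_0)$ with $f_i\in\mathcal M\cap(\mathcal R\otimes\mathbb C^N)$.

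For (3)$\Rightarrow$(1), the $f_i$ give $\operatorname{fd}(\mathcal M)=\dim\operatorname{span}\{f_i(\lambda_0)\}$ independent vectors at $\lambda_0$. Applying the minor argument to the $f_i$, which lie in $\mathcal R\otimes\mathbb C^N$, we get $\dim E_\lambda(\mathcal M\cap(\mathcal R\otimes\mathbb C^N))\ge\operatorname{fd}(\mathcal M)$ on an open dense set $V$. The reverse inequality holds everywhere by inclusion, so equality holds on $V$. Theorem \ref{thm 1.2} ((2)$\Rightarrow$(1) with $l=N-\operatorname{fd}(\mathcal M)$) then gives $\chi_{\mathcal R}(\mathcal M^\perp)=N-\operatorname{fd}(\mathcal M)=K(\mathcal M^\perp)$.

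The only point needing care is the density of the nonvanishing set of a minor. It requires each minor to be a not-identically-zero holomorphic function on $\mathbb B_d$, which holds since it is nonzero at $\lambda_0$. The identity theorem on the connected ball then makes its zero set closed and nowhere dense. Everything else is bookkeeping with Lemma \ref{dimdefect} and Theorem \ref{thm 1.2}.
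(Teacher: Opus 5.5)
Your proof is correct and is essentially the paper's argument. In both, (3) is handled by a nonvanishing $n\times n$ minor with $n=\operatorname{fd}(\mathcal M)$, which gives $\dim E_\lambda(\mathcal M\cap(\mathcal R\otimes\mathbb C^N))\ge\operatorname{fd}(\mathcal M)$ on an open dense set; the inclusion in $E_\lambda\mathcal M$ and $\dim E_\lambda\mathcal M\le\operatorname{fd}(\mathcal M)$ then give equality. The link with (1) comes from Theorem~\ref{thm 1.2} together with $K(\mathcal M^\perp)=N-\operatorname{fd}(\mathcal M)$.

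The differences are minor. You route (3) directly to (1) rather than through (2). You also explicitly check that $\operatorname{mp}(\mathcal M)$ is open and dense, which the paper uses tacitly when it calls (1)$\Leftrightarrow$(2) easy and (2)$\Rightarrow$(3) obvious.
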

\begin{proof}
$(2)\Rightarrow (3)$ is obvious. By Theorem \ref{thm 1.2}, it is easy to see that $(1)\Leftrightarrow(2)$. $(3)\Rightarrow(2)$: Set $n=\operatorname{fd}(\mathcal{M})$. Then from $(3)$, there exist $\{f_i\}_{i=1}^{m}\subset \mathcal{M}\cap({\cal R}\otimes\mathbb{C}^N)$ such that
 $$
 \dim \operatorname{span}\{f_i(\lambda_0)|1\leq i\leq m\}=\operatorname{fd}(\mathcal{M})=n.
 $$
 Obviously, $n\leq m$. Without loss of generality, assume that $\dim\operatorname{span}\{f_i(\lambda_0)|1\leq i\leq n\}=n$. Write $f_i=(f_{i,1},\cdots,f_{i,N})$. Then 
 \begin{eqnarray}
   \operatorname{rank} \left[
                 \begin{array}{cccc}
                   f_{1,1}(\lambda_{0}) & f_{1,2}(\lambda_{0}) & \cdots & f_{1,N}(\lambda_{0}) \\
                   f_{2,1}(\lambda_{0}) &f_{2,2}(\lambda_{0}) & \cdots &f_{2,N}(\lambda_{0}) \\
                   \vdots & \vdots & \ddots & \vdots \\
                    f_{n,1}(\lambda_{0}) & f_{n,2}(\lambda_{0}) & \cdots & f_{n,N}(\lambda_{0})\\
                 \end{array}
               \right]=n.
\end{eqnarray}
   Hence there is $1\leq i_1< \cdots < i_n\leq N$ such that
\begin{eqnarray}
      \operatorname{rank} \left[
                 \begin{array}{cccc}
                   f_{1,i_1}(\lambda_{0}) & f_{1,i_2}(\lambda_{0}) & \cdots & f_{1,i_n}(\lambda_{0}) \\
                   f_{2,i_1}(\lambda_{0}) &f_{2,i_2}(\lambda_{0}) & \cdots &f_{2,i_n}(\lambda_{0}) \\
                   \vdots & \vdots & \ddots & \vdots \\
                    f_{n,i_1}(\lambda_{0}) & f_{n,i_2}(\lambda_{0}) & \cdots & f_{n,i_n}(\lambda_{0})\\
                 \end{array}
               \right]=n.
\end{eqnarray}
   Set
   \begin{eqnarray}\label{det phi}
 f(z)=\left|
                 \begin{array}{cccc}
                   f_{1,i_1}(z) & f_{1,i_2}(z) & \cdots & f_{1,i_n}(z) \\
                   f_{2,i_1}(z) &f_{2,i_2}(z) & \cdots &f_{2,i_n}(z) \\
                   \vdots & \vdots & \ddots & \vdots \\
                    f_{n,i_1}(z) & f_{n,i_2}(z) & \cdots & f_{n,i_n}(z)\\
                 \end{array}
               \right|.
\end{eqnarray}
 Then $f\in \operatorname{Hol}(\mathbb{B}_d)$ and $f(\lambda_0)\neq 0$. Let $U=\mathbb{B}_d\setminus Z(f)$, we have $U$ is an open set which is dense in $\mathbb{B}_d$, and 
\begin{eqnarray}       
       \operatorname{rank} \left[
                 \begin{array}{cccc}
                   f_{1,i_1}(\lambda) & f_{1,i_2}(\lambda) & \cdots & f_{1,i_n}(\lambda) \\
                   f_{2,i_1}(\lambda) &f_{2,i_2}(\lambda) & \cdots &f_{2,i_n}(\lambda) \\
                   \vdots & \vdots & \ddots & \vdots \\
                    f_{n,i_1}(\lambda) & f_{n,i_2}(\lambda) & \cdots & f_{n,i_n}(\lambda)\\
                 \end{array}
               \right]=n, ~\forall \lambda\in U.
\end{eqnarray}
 Hence for all $\lambda\in U$, $\dim\operatorname{span}\{f_i(\lambda)|1\leq i\leq n\}=n$. Since 
 $$
 \operatorname{span}\{f_i(\lambda)|1\leq i\leq n\}\subset E_\lambda (\mathcal{M}\cap(\mathcal{R}\otimes\mathbb{C}^N))\subset E_\lambda \mathcal{M}
 $$ 
 and $\dim E_\lambda \mathcal{M}\leq \operatorname{fd}(\mathcal{M})=n$, we have
 $$
 E_\lambda (\mathcal{M}\cap(\mathcal{R}\otimes\mathbb{C}^N))=E_\lambda \mathcal{M}
 $$
 for $\lambda\in U$, which implies that $(2)$ holds.
 
\end{proof}

Now we prove the first part of Theorem \ref{thm1.4}. For convenience, we restate it as the theorem below.
\begin{thm}
Let $\cal R$ be a unital subalgebra of $\operatorname{Mult}(H_{\bf K})$. If $\cal R$ contains a nonzero ideal of $\operatorname{Mult}(H_{\mathbf{K}})$, then Arveson's GBC formula always holds true unconditionally, i.e. for any nonzero submodule $\cal M$ in $H_{\mathbf{K}}\otimes\mathbb{C}^N$,
$$
K({\cal M}^\perp)=\chi_{\cal R}(\cal M^{\perp})
$$
\end{thm}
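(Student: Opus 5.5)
By Proposition \ref{3 equal}, it suffices to verify condition (3). That is, we want some $\lambda_0\in\operatorname{mp}(\mathcal{M})$ and finitely many $f_1,\dots,f_m\in\mathcal{M}\cap(\mathcal{R}\otimes\mathbb{C}^N)$ whose values at $\lambda_0$ span $E_{\lambda_0}\mathcal{M}$. Let $\mathcal{I}\subset\mathcal{R}$ be a nonzero ideal of $\operatorname{Mult}(H_{\mathbf{K}})$ and fix $0\neq\varphi\in\mathcal{I}$. Since $\varphi$ is holomorphic and not identically zero, $Z(\varphi)$ is nowhere dense in $\mathbb{B}_d$. The set $\operatorname{mp}(\mathcal{M})$ is open and nonempty: maximal rank is attained and is an open condition, by a minor determinant argument as in the proof of $(3)\Rightarrow(2)$. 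Moreover, its complement is contained in the zero set of a nonzero holomorphic minor, so $\operatorname{mp}(\mathcal{M})$ is dense. Hence we can pick $\lambda_0\in\operatorname{mp}(\mathcal{M})\setminus Z(\varphi)$.

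The main step is to produce elements of $\mathcal{M}$ that lie in $\mathcal{R}\otimes\mathbb{C}^N$. Let $n=\operatorname{fd}(\mathcal{M})$ and choose $g_1,\dots,g_n\in\mathcal{M}$ with $g_1(\lambda_0),\dots,g_n(\lambda_0)$ a basis of $E_{\lambda_0}\mathcal{M}$. The $g_i$ are merely elements of $H_{\mathbf{K}}\otimes\mathbb{C}^N$, so I would first replace them by multipliers without leaving $\mathcal{M}$. The natural tool is the density of $\operatorname{Mult}(H_{\mathbf{K}})$-valued elements in $\mathcal{M}$. For a submodule of a CNP space, the McCullough--Trent (Beurling-type) theorem gives $P_{\mathcal{M}}=\Phi\Phi^*$ for a multiplier $\Phi:\mathbb{B}_d\to B(E,\mathbb{C}^N)$. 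Then $\Phi(\cdot)\xi\in\mathcal{M}\cap(\operatorname{Mult}(H_{\mathbf{K}})\otimes\mathbb{C}^N)$ for every $\xi\in E$, and
$$
E_\lambda\mathcal{M}=\operatorname{ran}\Phi(\lambda),
$$
because $\mathcal{M}=\overline{\operatorname{ran}}\,M_\Phi$ and evaluation at $\lambda$ is continuous. Consequently there are $\xi_1,\dots,\xi_n\in E$ such that the multiplier-valued functions $h_i=\Phi\xi_i\in\mathcal{M}$ satisfy that $h_1(\lambda_0),\dots,h_n(\lambda_0)$ span $E_{\lambda_0}\mathcal{M}$. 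I expect this step to be the main obstacle, namely justifying the McCullough--Trent representation and the identity $E_\lambda\mathcal{M}=\operatorname{ran}\Phi(\lambda)$ in this setting. As a fallback, I would approximate $g_i$ in norm by $M_\Phi(\text{polynomials}\otimes E)$ and use that point evaluation is continuous and that linear independence of $n$ vectors is an open condition.

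Next, set $f_i=\varphi h_i$. Each component of $h_i$ lies in $\operatorname{Mult}(H_{\mathbf{K}})$ and $\mathcal{I}$ is an ideal, so every component of $f_i$ lies in $\mathcal{I}\subset\mathcal{R}$; thus $f_i\in\mathcal{R}\otimes\mathbb{C}^N$. Also $f_i\in\mathcal{M}$, because $\mathcal{M}$, being closed and invariant under polynomials, is invariant under all of $\operatorname{Mult}(H_{\mathbf{K}})$. This invariance holds since multipliers are WOT limits of polynomial multipliers, for instance via Fejér/Cesàro means $M_{\varphi_r}$ or by the regularity assumptions on $\mathbf{K}$. Finally $f_i(\lambda_0)=\varphi(\lambda_0)h_i(\lambda_0)$ with $\varphi(\lambda_0)\neq0$, so $\{f_i(\lambda_0)\}_{i=1}^n$ spans $E_{\lambda_0}\mathcal{M}$. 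This is condition (3) of Proposition \ref{3 equal}, which yields $K(\mathcal{M}^\perp)=\chi_{\mathcal{R}}(\mathcal{M}^\perp)$.
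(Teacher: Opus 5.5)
Your proposal is correct and is essentially the paper's proof. The paper likewise uses McCullough--Trent to get $P_{\mathcal M}=\Phi\Phi^*$ and $E_\lambda\mathcal M=\operatorname{ran}\Phi(\lambda)$, takes the multiplier-valued elements $\Phi\xi$ of $\mathcal M$, multiplies them by a nonzero $g$ in the ideal to land in $\mathcal M\cap(\mathcal R\otimes\mathbb C^N)$ off $Z(g)$, and concludes via Proposition \ref{3 equal}. The differences are minor: the paper checks condition (2) on all of $\mathbb B_d\setminus Z(g)$ rather than condition (3) at one point, and $\varphi h_i\in\mathcal M$ also follows directly from $\varphi h_i=\Phi(\varphi\otimes\xi_i)$, so your WOT-density argument is not needed for that step.
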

\begin{proof}
By \cite[Theorem 0.7]{trent}, there is a Hilbert space $E$ and an inner multiplier operator $\Phi\in B(H_{\mathbf{K}}\otimes E,H_{\mathbf{K}}\otimes\mathbb{C}^N)$, such that $\mathcal{M}=\Phi(H_{\mathbf{K}}\otimes E)$ and $P_\mathcal{M}=\Phi\Phi^*.$ Then for all \( \lambda \in \mathbb{B}_d \),
\[
\operatorname{ran} \Phi(\lambda) = \{ \Phi(\lambda)y : y \in E \} = \{ \Phi(\lambda)f(\lambda) : f \in H_{\mathbf{K}}\otimes E \} = \{ E_\lambda(\Phi f) : f \in H_{\mathbf{K}}\otimes E \} = E_\lambda \mathcal{M}.
\]
Choose an orthonormal basis \(\{ e_i \}\) of \(E\), and put
\[
\varphi_i = \Phi(1\otimes e_i) \in \mathcal{M}\cap(\operatorname{Mult}(H_\mathbf{K}) \otimes \mathbb{C}^N).
\]
Then for every \(\lambda \in \mathbb{B}_d\),
\begin{equation}\label{elm=ranphi}
E_\lambda \mathcal{M} = \operatorname{ran} \Phi(\lambda) = \overline{\operatorname{span}\{\varphi_i(\lambda) : i \geq 1\}}.
\end{equation}
Suppose that $\cal I$ is a nonzero ideal in $\operatorname{Mult}(H_{\mathbf K})$ and $\cal I\subset \cal R$. For a fixed nonzero $g \in \cal I$, by \eqref{elm=ranphi} and $\dim E_\lambda \mathcal{M}\leq N$,
we have for $\lambda\in\mathbb{B}_d\setminus Z(g)$,
$$
E_\lambda \mathcal{M}=\operatorname{span}\{g\varphi_i(\lambda) : i \geq 1\}.
$$
Hence for $\lambda\in\mathbb{B}_d\setminus Z(g)$,
$$
E_\lambda \mathcal{M}\subset E_\lambda (\mathcal{M}\cap (\mathcal{I}\otimes \mathbb{C}^N))\subset E_\lambda (\mathcal{M}\cap (\mathcal{R}\otimes \mathbb{C}^N)).
$$
Then by Proposition \ref{3 equal}, 
$$
\chi_{\mathcal{R}}(\mathcal{M}^\perp)=K(\mathcal{M}^\perp).
$$
\end{proof}

At the end of this section, we will prove the second part of Theorem \ref{thm1.4}. For convenience, we rewrite it as the following theorem.

\begin{thm}\label{AH chis}
   Let $\mathfrak{R}$ be a unital subalgebra of $A(H_{\mathbf{K}})$. Then for every integer $0\leq m\leq N$, there is a submodule $\mathcal{M}\subset H_{\mathbf{K}}\otimes\mathbb{C}^N$ such that 
  $$
  \chi_{\mathfrak{R}}(\mathcal{M}^\perp)-K(\mathcal{M}^\perp)=m.
  $$
\end{thm}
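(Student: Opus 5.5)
We must construct submodules with $\chi_{\mathfrak R}-K=m$. By Lemma \ref{dimdefect}, $K(\mathcal M^\perp)=N-\operatorname{fd}(\mathcal M)$. By Theorem \ref{thm 1.2}, $\chi_{\mathfrak R}(\mathcal M^\perp)=N-\dim E_\lambda(\mathcal M\cap(\mathfrak R\otimes\mathbb C^N))$ for $\lambda$ in a dense open set. Hence
$$
\chi_{\mathfrak R}(\mathcal M^\perp)-K(\mathcal M^\perp)=\operatorname{fd}(\mathcal M)-\dim E_\lambda(\mathcal M\cap(\mathfrak R\otimes\mathbb C^N))\quad\text{for generic }\lambda,
$$
so the task is to make $\operatorname{fd}(\mathcal M)=m$ while $\mathcal M\cap(\mathfrak R\otimes\mathbb C^N)=0$. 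The case $m=0$ is trivial: take $\mathcal M=0$, or $\mathcal M=H_{\mathbf K}\otimes\mathbb C^N$, where both sides vanish.

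For $1\le m\le N$ I would take $\mathcal M=\mathcal M_0\otimes\mathbb C^m\subset H_{\mathbf K}\otimes\mathbb C^N$, where $\mathcal M_0\subset H_{\mathbf K}$ is a nonzero scalar submodule with $\mathcal M_0\cap A(H_{\mathbf K})=\{0\}$. Then $\operatorname{fd}(\mathcal M)=m$, since $\mathcal M_0$ contains a function that is nonzero somewhere. Also $\mathcal M\cap(\mathfrak R\otimes\mathbb C^N)\subset(\mathcal M_0\cap A(H_{\mathbf K}))\otimes\mathbb C^m=0$. This gives $\chi_{\mathfrak R}=N$ and $K=N-m$, so the difference is $m$.

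The main obstacle is producing the scalar $\mathcal M_0$. The natural choice is $\mathcal M_0=[\varphi]$, the closed submodule generated by a suitable multiplier $\varphi$. Every $f\in A(H_{\mathbf K})$ extends continuously to $\overline{\mathbb B}_d$, since it is a norm limit of polynomials in the multiplier norm, which dominates the sup norm. So I would choose $\varphi$ such that no nonzero element of $[\varphi]$ is continuous up to the boundary.

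A first candidate is a singular-inner-type multiplier. By the CNP structure there is an inner multiplier $\theta$ with $M_\theta$ an isometry and $[\theta]=\theta H_{\mathbf K}$; one could use a one-variable singular inner function composed with $z_1$, which is a multiplier when it lies in the multiplier algebra. If $\theta h\in A(H_{\mathbf K})$ is nonzero, a boundary argument should give a contradiction: along radii ending at the singular support, $\theta\to0$ exponentially fast, while a continuous nonzero function cannot vanish to such order uniformly on an open boundary arc.

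An alternative route avoids explicit functions. Use Theorem \ref{them1}-type reasoning: locally non-algebraic submodules exist in the Drury--Arveson setting, so one can try to adapt the construction in \cite{curv} to $A(H_{\mathbf K})$ in place of $\mathcal P_d$. This construction is the step I would spend the most effort on, checking it against the regularity conditions (II)--(III).
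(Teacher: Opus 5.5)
Your reduction is sound. Your arrangement $\mathcal M=\mathcal M_0\otimes\mathbb C^m$, which gives $K=N-m$ and $\chi_{\mathfrak R}=N$, is just the complement of the paper's $\mathbf M^{\oplus m}\oplus H_{\mathbf K}^{\oplus(N-m)}$, which gives $K=0$ and $\chi_{\mathfrak R}=m$. But the whole content of the theorem is the existence of a nonzero scalar submodule meeting $A(H_{\mathbf K})$ trivially, and your candidate does not provide one. There are three problems with it.

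\emph{No nonconstant isometric multipliers.} For $d\ge2$ none exist. From $\|z^\alpha\|^2=\alpha!/(a_{|\alpha|}|\alpha|!)$ one gets $\sum_i\|z_ip\|^2=\frac{(n+d)a_n}{(n+1)a_{n+1}}\|p\|^2$ for $p$ homogeneous of degree $n$. Since $a_1a_n\le a_{n+1}$, this is $<\frac{d}{a_1}\|p\|^2$ for $n\ge1$. So $\sum_i\|z_i\theta\|^2=\sum_i\|z_i\|^2=d/a_1$ together with $\|\theta\|=1$ forces $\theta$ to be constant.

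\emph{$S(z_1)$ need not lie in $H_{\mathbf K}$.} For the Dirichlet-type kernel $a_n=1/(n+1)$, the functions of $z_1$ form the classical Dirichlet space, whose only inner functions are finite Blaschke products.

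\emph{The boundary mechanism fails even when $S(z_1)\in H_{\mathbf K}$.} The singularity of $S(z_1)$ sits only over the circle $\{|\zeta_1|=1\}\subset\partial\mathbb B_d$, where $z_2=0$. Hence $z_2S(z_1)\in[S(z_1)]$ is nonzero and continuous on $\overline{\mathbb B}_d$, because $|z_2S(z_1)|\le(1-|z_1|^2)^{1/2}$. The ``open boundary arc'' heuristic is also wrong, since singular measures live on Lebesgue-null sets. Concretely, in $H^2_d$ with $\mu=\delta_1$, the function $(1-z_1)S(z_1)$ lies in $[S(z_1)]$. It also lies in $A(H^2_d)$: it is a uniform limit on $\overline{\mathbb D}$ of polynomials in $z_1$, and von Neumann's inequality for the contraction $M_{z_1}$ turns this into multiplier-norm convergence. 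So for $\mathfrak R=A(H^2_d)$ your $\mathcal M$ gives $\chi_{\mathfrak R}-K=0$.

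You are missing two ideas.
\begin{enumerate}
\item[(a)] The right obstruction is that the zeros of the submodule accumulate at \emph{every} point of $\partial\mathbb B_d$. Then any element of $\mathfrak R\subset A(H_{\mathbf K})\subset C(\overline{\mathbb B}_d)$ lying in it vanishes on the whole sphere, hence identically. The rate of decay plays no role.
\item[(b)] You need an existence theorem producing a nonzero element of $H_{\mathbf K}$ with such zeros.
\end{enumerate}
The paper gets both from Lemma \ref{interpolat}. Choose $\{\zeta_i\}$ dense in $\partial\mathbb B_d$ and $c_i\uparrow1$ fast enough that $|G(c_i\zeta_i,c_j\zeta_j)|^2\le 1/((j-1)j^2)$ for $i<j$; this uses $\sum b_n=1$, which makes $g(x^2)\to0$ as $x\to1^-$. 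Then $G-I$ is Hilbert--Schmidt and the points are uniformly separated, so $\{c_i\zeta_i\}$ is interpolating. Hence a multiplier with $f(c_1\zeta_1)=1$ and $f(c_i\zeta_i)=0$ for $i\ge2$ exists, and $\mathbf M=\{f\in H_{\mathbf K}: f(c_i\zeta_i)=0,\ i\ge2\}$ is a nonzero submodule with $\mathbf M\cap\mathfrak R=\{0\}$. Your alternative of adapting \cite{curv} is too vague to fill this gap.
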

In order to prove Theorem \ref{AH chis}, we need the following lemma, which comes from \cite{interpolating sequence}.
A sequence $\{\lambda_i\}_{i\geq 1}\subset \mathbb{B}_d$ is called an interpolating sequence for $\operatorname{Mult}(H_{\mathbf{K}})$ if, whenever $\{w_i\}_{i\geq 1}$ is a bounded sequence of complex numbers, there is a multiplier $\phi$ such that $\phi(\lambda_i) = w_i$, $i\geq 1$.
\begin{lem}\label{interpolat}
 \cite[Theorem~1.1~and~Lemma~2.2]{interpolating sequence} A sequence $\{\lambda_i\}_{i\geq 1}$ is an interpolation sequence for $\operatorname{Mult}(H_{\mathbf{K}})$ if and only if
    \begin{itemize}
    \item [(1)] The Gram matrix $G$ associated with the sequence is bounded, where
    $$
    G({\lambda_i,\lambda_j})=\frac{\langle \mathbf{K}_{\lambda_i}, \mathbf{K}_{\lambda_j}\rangle}{\|\mathbf{K}_{\lambda_i}\|\|\mathbf{K}_{\lambda_j}\|}.
    $$
    \item [(2)] There is a constant $c>0$, such that
    $
    \sqrt{1-|G({\lambda_i,\lambda_j})|^2}\geq c, ~\forall i\neq j.
    $
  \end{itemize}
\end{lem}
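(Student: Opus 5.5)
The plan is to reduce the statement to the known structure theory of complete Nevanlinna--Pick spaces. It is exactly the Carleson-measure-plus-weak-separation characterization of interpolating sequences due to Aleman--Hartz--McCarthy--Richter, which the paper cites; since $\mathbf K$ is an irreducible CNP kernel normalized by $a_0=1$, that setting applies. I would write $\hat{\mathbf K}_{\lambda}=\mathbf K_\lambda/\|\mathbf K_\lambda\|$ for the normalized kernels. Then $G$ is the Gram matrix of $\{\hat{\mathbf K}_{\lambda_i}\}$, and condition (1) says that $\sum_i|f(\lambda_i)|^2\|\mathbf K_{\lambda_i}\|^{-2}\le C\|f\|^2$, i.e.\ that the sequence is a Carleson sequence. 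Throughout I would use the standard Pick-matrix description: a function $\phi$ on a set $S$ extends to a multiplier of norm $\le M$ if and only if $\bigl[(M^2-\phi(z)\overline{\phi(w)})\mathbf K(z,w)\bigr]_{z,w\in S}\ge 0$. This holds because $\mathbf K$ is CNP.

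\emph{Necessity.} Assume $\{\lambda_i\}$ is interpolating. By the open mapping theorem applied to $\phi\mapsto(\phi(\lambda_i))_i$, there is $M$ such that every bounded $w$ is interpolated by some $\phi$ with $\|M_\phi\|\le M\|w\|_\infty$.

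For (2), fix $i\ne j$ and take $\phi$ with $\phi(\lambda_i)=1$, $\phi(\lambda_j)=0$ and $\|M_\phi\|\le M$. Positivity of the $2\times2$ Pick matrix of $\phi/M$ at $\{\lambda_i,\lambda_j\}$, compressed to normalized kernels, gives
$$1-|G(\lambda_i,\lambda_j)|^2\ \ge\ c\,\frac{|\phi(\lambda_i)-\phi(\lambda_j)|^2}{M^2}$$
for an absolute constant $c$, since the pseudo-hyperbolic distance is contractive under contractive multipliers. This yields (2).

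For (1), I would first use $M_\phi^*\mathbf K_{\lambda_i}=\overline{\phi(\lambda_i)}\mathbf K_{\lambda_i}$. Averaging $\|\sum a_i\overline{w_i}\hat{\mathbf K}_{\lambda_i}\|\le M\|\sum a_i\hat{\mathbf K}_{\lambda_i}\|$ over random unimodular $w$ gives only the \emph{lower} Riesz bound $\sum|a_i|^2\le M^2\|\sum a_i\hat{\mathbf K}_{\lambda_i}\|^2$. To get the upper bound, i.e.\ boundedness of $G$, I would use the CNP structure. The Pick condition for the interpolants with all sign choices $w\in\{\pm1\}^{\mathbb N}$ says that $\bigl[(M^2-w_i\overline{w_j})G(\lambda_i,\lambda_j)\bigr]\ge0$. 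Averaging over $w$ kills the off-diagonal terms in the subtracted part, giving $M^2G\ge I$ entrywise-consistently, which is again only the lower bound. So boundedness of $G$ needs the column-row property of $\operatorname{Mult}(H_{\mathbf K})$ (available in CNP spaces), combined with the identity
$$\mathbf K(z,w)\Bigl(1-\tfrac1{\mathbf K(z,w)}\Bigr)=\sum_{n\ge1}b_n\langle z,w\rangle^n\,\mathbf K(z,w).$$
With these, one shows that a uniformly bounded family of interpolants yields a bounded embedding, i.e.\ the Carleson condition. I would follow the AHMR argument at exactly this point.

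\emph{Sufficiency.} Assume (1) and (2). In a CNP space, a sequence whose normalized kernels form a Riesz sequence (Gram matrix bounded above and below) is interpolating. The reason is that the Pick matrix $\bigl[(M^2-w_i\overline{w_j})G_{ij}\bigr]=M^2G-D_wGD_w^*$ is positive once $M^2\ge\|G\|\,\|G^{-1}\|\,\|w\|_\infty^2$, so the CNP property gives the multiplier. It therefore suffices to show that (1) and (2) imply $G$ is bounded below, possibly after reducing to this case. My route would be as follows. Bounded $G$ together with the weak separation (2) allows an application of the Marcus--Spielman--Srivastava solution of the Kadison--Singer problem, in the form of the Feichtinger conjecture with control from (2). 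This partitions $\{\lambda_i\}$ into finitely many subsequences, each of which is a Riesz sequence and hence interpolating. One then glues the pieces back together: by (2) and the CNP property, there are multipliers of controlled norm separating each piece from the others (products of contractive "peak" multipliers). Multiplying the interpolants on each piece by these separating multipliers and summing interpolates on the whole sequence.

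I expect the sufficiency direction, specifically the passage from (1)+(2) to a finite Riesz-sequence decomposition and the subsequent gluing, to be the main obstacle. It genuinely needs the Kadison--Singer machinery, which is why the paper takes the lemma from \cite{interpolating sequence} rather than reproving it.
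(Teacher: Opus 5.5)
The paper does not prove this lemma. It simply quotes Theorem~1.1 and Lemma~2.2 of Aleman--Hartz--McCarthy--Richter, so deferring to that source is the paper's route too. Your necessity argument for (2) is fine, and so is your Pick argument that a Riesz sequence of normalized kernels is interpolating. But your reconstruction contains a false step in the necessity of (1). You claim that sign-averaging gives only the lower Riesz bound and that the upper bound needs the column-row property. In fact, let $\phi_\varepsilon$ interpolate $\varepsilon\in\{\pm1\}^{\mathbb N}$ with $\|M_{\phi_\varepsilon}\|\le M$, and let $T_\varepsilon$ be $M_{\phi_\varepsilon}^*$ restricted to $\operatorname{span}\{\hat{\mathbf K}_{\lambda_i}\}$. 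Then $T_\varepsilon$ acts by $\hat{\mathbf K}_{\lambda_i}\mapsto\varepsilon_i\hat{\mathbf K}_{\lambda_i}$, so $T_\varepsilon^2=I$. Hence $\|x\|\le M\|T_\varepsilon x\|$ holds alongside $\|T_\varepsilon x\|\le M\|x\|$. Squaring the former and averaging over $\varepsilon$ gives $\|\sum_i a_i\hat{\mathbf K}_{\lambda_i}\|^2\le M^2\sum_i|a_i|^2$, i.e.\ $\|G\|\le M^2$. Equivalently, conjugate your inequality $M^2G\ge D_\varepsilon GD_\varepsilon$ by $D_\varepsilon$ before averaging. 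This holds in every reproducing kernel Hilbert space. So the obstruction you describe does not exist, and the column-row argument you put in its place is never actually given.

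The real gap is the gluing in the sufficiency direction. Marcus--Spielman--Srivastava splits the Bessel family of unit vectors $\{\hat{\mathbf K}_{\lambda_i}\}$ into finitely many Riesz sequences $\Lambda_1,\dots,\Lambda_r$ using (1) alone, and each $\Lambda_j$ is then interpolating by your Pick argument. What (2) must supply is a multiplier of controlled norm that vanishes on an entire piece $\Lambda_k$ and is bounded below on another piece $\Lambda_j$. Condition (2) only gives two-point multipliers. A product of them over the infinitely many points of $\Lambda_k$ has no control on convergence or on a lower bound. And producing multipliers that vanish on $\Lambda\setminus\Lambda_j$ and are bounded below on $\Lambda_j$ is itself an interpolation problem on the union, so it cannot be taken for granted. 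The missing ingredient is the uniform estimate
\[
\operatorname{dist}\bigl(\hat{\mathbf K}_\lambda,\overline{\operatorname{span}}\{\hat{\mathbf K}_\mu:\mu\in\Lambda_k\}\bigr)\ge c'>0,\qquad \lambda\in\Lambda_j,\ j\neq k,
\]
and this is where the CNP structure genuinely enters. One way to get it:
\begin{itemize}
\item Put $b(z)=(\sqrt{b_\alpha}z^\alpha)_\alpha$, so that $1-1/\mathbf K(z,w)=\langle b(z),b(w)\rangle$. Let $\Psi=\varphi_{b(\lambda)}\circ b$, where $\varphi_a$ is the involutive automorphism of the unit ball of $\ell^2$ exchanging $a$ and $0$. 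Then $\Psi$ is a contractive row multiplier with $\Psi(\lambda)=0$ and $\|\Psi(\mu)\|=\sqrt{1-|G(\lambda,\mu)|^2}\ge c$.
\item Apply $M_\Psi^*$ to $\hat{\mathbf K}_\lambda-\sum_\mu a_\mu\hat{\mathbf K}_\mu$ and use the lower Riesz bound of $\Lambda_k$ for $\ell^2$-valued coefficients; this gives the estimate.
\item The McCullough--Trent row $\Phi_k$ with $M_{\Phi_k}M_{\Phi_k}^*=P_{\{f:\,f|_{\Lambda_k}=0\}}$ \cite{trent} then satisfies $\|\Phi_k(\lambda)\|\ge c'$ on $\Lambda_j$.
\item Compose $\Phi_k$ with a bounded column multiplier that interpolates $\Phi_k(\lambda)^*/\|\Phi_k(\lambda)\|^2$ on the Riesz sequence $\Lambda_j$ (operator-valued Pick). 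This yields a multiplier equal to $0$ on $\Lambda_k$ and $1$ on $\Lambda_j$.
\item Products over $k\neq j$, multiplied by the interpolants on each $\Lambda_j$ and summed over $j$, interpolate on all of $\Lambda$.
\end{itemize}
Your sketch contains none of this.
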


$Proof~of~Theorem~\ref{AH chis}$.
 Set $$g(z)=1-\sum\limits_{n=1}^{\infty}b_nz^{n}, \quad z\in \mathbb D.$$
 By the regularity assumption \eqref{sum bn} on the reproducing kernel $\mathbf{K}$,  $\sum\limits_{n=1}^{\infty}b_n=1$. A straightforward calculation shows that for fixed $0<t<1$, 
 $$
   \lim\limits_{x\rightarrow 1^-}\frac{g(t^2)g(x^2)}{g^2(xt)}=0.
 $$
For $x,t\in(0,1),$ write
 $$
  f_t(x)=\frac{g(t^2)g(x^2)}{g^2(xt)}.
 $$
Then a simple analysis yields a nonnegative sequence $\{c_i\}_{i\ge 1}$ that increases monotonically to $1$ and satisfies
$$
f_{c_i}(c_j)\leq \frac{1}{(j-1)j^2},~\forall 1\leq i < j.
$$
  Let $\{\zeta_i\}_{i\geq 1}$ be a countable dense subset of $\partial\mathbb B_d$, $G$ be the Gram matrix associated to $\{c_i\zeta_i\}_{i=1}^\infty$. Obviously 
  $$
  G(c_i\zeta_i,c_i\zeta_i)=1, \,\forall i\geq 1.
  $$ Moreover,  for $1\leq i<j$,
 \begin{equation}\label{Gij<f}
 |G(c_i\zeta_i,c_j\zeta_j)|^2
 =\frac{g(|c_i\zeta_i|^{2})g(|c_j\zeta_j|^{2})}{\left|g(\langle c_i\zeta_i,c_j\zeta_j\rangle)\right|^2}
 \leq f_{c_i}(c_j)
 \leq \frac{1}{(j-1)j^2}.
 \end{equation}
 Thus,
 the Gram matrix $G$ associated with the sequence $\{c_i\zeta_i\}_{i\geq 1}$ satisfies
\begin{equation}\label{GH-S}
  \|G-I\|_{HS}^2
\leq 2\sum_{j=1}^{\infty}\sum_{i=1}^{j-1}|G(c_i\zeta_i,c_j\zeta_j)|^2\\
  \leq 2\sum_{j=1}^{\infty}\frac{1}{j^2}\\
  =\frac{\pi^2}{3},
\end{equation}
which implies that $G$ is bounded. Moreover, by \eqref{Gij<f} again, 
\begin{equation}\label{dHlambda}
\sqrt{1-|G({c_i\zeta_i,c_j\zeta_j})|^2}\geq {\frac{\sqrt{3}}{2}}, ~\forall 1\leq i<j.
\end{equation}
By Lemma \ref{interpolat}, $\{c_i\zeta_i\}_{i\geq 1}$ is an interpolation sequence of $\operatorname{Mult}(H_{\mathbf{K}})$, and hence  there is an $f\in \operatorname{Mult}(H_{\mathbf{K}})$ such that
\begin{equation}\label{f neq 0 in M}
  f(c_1\zeta_1)=1~\text{and}~f(c_i\zeta_i)=0~\text{for}~i\geq 2.
\end{equation}
Set 
$$
\mathbf{M}=\{f\in H_{\mathbf{K}}:f(c_i\zeta_i)=0,~\forall i\geq 2\}.
$$ 
Notice that $\overline{\{c_i\zeta_i\}_{i=1}^\infty}\cap\partial\mathbb{B}_d=\partial\mathbb B_d$ and $\mathfrak{R}\subseteq A(H_{\mathbf{K}}) \subseteq \operatorname{Mult}(H_{\mathbf{K}})\cap C(\overline{\mathbb{B}}_d)$. Therefore
\begin{equation}\label{MR}
 \mathbf{M}\cap \mathfrak{R}=\{0\}.
\end{equation}
Let
\[
\mathcal{M} = \underbrace{\mathbf{M} \oplus \cdots \oplus \mathbf{M}}_{m\text{ times}} \oplus \underbrace{H_{\mathbf{K}} \oplus \cdots \oplus H_{\mathbf{K}}}_{N-m\text{ times}}.
\]
By \eqref{MR}, it is easy to see that 
$$
\operatorname{fd}(\mathcal{M}\cap (\mathfrak{R}\otimes\mathbb{C}^N))\leq N-m.
$$
If $m=N$, then $\operatorname{fd}(\mathcal{M}\cap (\mathfrak{R}\otimes\mathbb{C}^N))=0$. If $m<N$, since $\mathfrak{R}\neq 0$, there is a $g\in \mathfrak{R}$ and a $\lambda_0\in \mathbb{B}_d$ such that $g(\lambda_0)\neq 0$. Then
$$
\dim\{E_{\lambda_0}(\mathcal{M}\cap (\mathfrak{R}\otimes\mathbb{C}^N))\} \geq \dim \{g(\lambda_0)e_{m+1},\cdots, g(\lambda_0)e_{N}\}=N-m.
$$
Hence $\operatorname{fd}(\mathcal{M}\cap (\mathfrak{R}\otimes\mathbb{C}^N))=N-m$. By Theorem \ref{thm 1.2}, 
\begin{equation}\label{MR1}
 \chi_{\mathfrak{R}}(\mathcal{M}^\perp)=N-\operatorname{fd}(\mathcal{M}\cap (\mathfrak{R}\otimes\mathbb{C}^N))=m.
\end{equation}
From \eqref{f neq 0 in M},  it follows that $\mathbf{M} \neq 0,$ hence $\operatorname{fd}(\mathcal{M})=N$. By Lemma \ref{K=N-fd}, $K({\cal M}^\perp)=0$. It follows that 
$$
\chi_{\mathfrak{R}}(\mathcal{M}^\perp)-K(\mathcal{M}^\perp)=m.
$$
\hfill$\qedsymbol$

\section{The finite defect problem: finitely-many-variable case}

In this section, we will study the finite defect problem for submodules in $H_{\mathbf{K}}\otimes\mathbb C^N$, and  we will prove Theorem \ref{thm1.5}. Recall that a submodule $\cal M$ in $H_{\mathbf{K}}\otimes\mathbb C^N$ is called of finite defect, if the defect operator $\Delta_{\cal M}$ is of finite rank. Let $\mathbf{K}(\lambda, z)$ be a regular unitarily invariant CNP reproducing kernel on the unit ball $\mathbb{B}_d$ $(d\ge 2)$ satisfying
$$
1-{\frac{1}{\mathbf{K}_{\lambda}(z)}}=\sum\limits_{n=1}^\infty b_n\langle z,\lambda\rangle^n,
$$ 
where $b_n\geq 0$.
 For $\eta\in \mathbb C^N$,  set 
\begin{equation}\label{Ceta}
 \mathbb{C}_\eta=\{\lambda\eta|\lambda\in \mathbb{C}\}.
\end{equation}
  Moreover, we need the following representation of the defect operator:
 \begin{eqnarray}\label{defect operator}
 \Delta_{\mathcal{M}}^{2}=P_{\mathcal{M}}\left(E_0+\sum_{\alpha\in \mathbb{N}_+^d}b_{\alpha}[M_{z^\alpha}\otimes I,P_{\mathcal{M}^{\perp}}][P_{\mathcal{M}^{\perp}},M_{z^\alpha}^{*}\otimes I]\right)P_{\mathcal{M}}
 \end{eqnarray}
In fact, notice that
$$
\begin{aligned}
&P_\mathcal{M}[M_{z^\alpha}\otimes I,P_{\mathcal{M}^{\perp}}][P_{\mathcal{M}^{\perp}},M_{z^\alpha}^{*}\otimes I]P_\mathcal{M}\\
=&P_\mathcal{M}\left((M_{z^\alpha}\otimes I)P_{\mathcal{M}^{\perp}}-P_{\mathcal{M}^{\perp}}(M_{z^\alpha}\otimes I)\right)
\left(P_{\mathcal{M}^{\perp}}(M_{z^\alpha}^{*}\otimes I)-(M_{z^\alpha}^{*}\otimes I)P_{\mathcal{M}^{\perp}}\right)P_\mathcal{M}\\
=&P_\mathcal{M}(M_{z^\alpha}\otimes I)P_{\mathcal{M}^{\perp}}(M_{z^\alpha}^{*}\otimes I)P_\mathcal{M}\\
=&P_\mathcal{M}(M_{z^\alpha}\otimes I)(M_{z^\alpha}^{*}\otimes I)P_\mathcal{M}-P_\mathcal{M}(M_{z^\alpha}\otimes I)P_{\mathcal{M}}(M_{z^\alpha}^{*}\otimes I)P_\mathcal{M}.
\end{aligned}
$$
Hence by (\ref{defect frac}), 
\begin{equation*}
\begin{aligned}
&\Delta_{\mathcal{M}}^{2}\\
=&P_\mathcal{M}-\sum_{\alpha\in \mathbb{N}_+^d}b_{\alpha}P_\mathcal{M}(M_{z^\alpha}\otimes I)P_\mathcal{M}(M_{z^\alpha}^{*}\otimes I)P_\mathcal{M}\\
=&P_\mathcal{M}-\sum_{\alpha\in \mathbb{N}_+^d}b_{\alpha}P_\mathcal{M}(M_{z^\alpha}\otimes I)(M_{z^\alpha}^{*}\otimes I)P_\mathcal{M}+\sum_{\alpha\in \mathbb{N}_+^d}b_{\alpha}P_\mathcal{M}[M_{z^\alpha}\otimes I,P_{\mathcal{M}^{\perp}}][P_{\mathcal{M}^{\perp}},M_{z^\alpha}^{*}\otimes I]P_\mathcal{M}\\
=&P_{\mathcal{M}}\left(I-\sum_{\alpha\in \mathbb{N}_+^d}b_{\alpha}(M_{z^\alpha}\otimes I)(M_{z^\alpha}^{*}\otimes I)+\sum_{\alpha\in \mathbb{N}_+^d}b_{\alpha}[M_{z^\alpha}\otimes I,P_{\mathcal{M}^{\perp}}][P_{\mathcal{M}^{\perp}},M_{z^\alpha}^{*}\otimes I]\right)P_{\mathcal{M}}\\
=&P_{\mathcal{M}}\left(E_0+\sum_{\alpha\in \mathbb{N}_+^d}b_{\alpha}[M_{z^\alpha}\otimes I,P_{\mathcal{M}^{\perp}}][P_{\mathcal{M}^{\perp}},M_{z^\alpha}^{*}\otimes I]\right)P_{\mathcal{M}}.
\end{aligned}
\end{equation*}

We will divide Theorem \ref{thm1.5} in two cases, depending on the set $\{n: b_n>0\}$ being finite or infinite. For  the set $\{n; b_n>0\}$ being finite, we have 

\begin{thm}\label{finite codim}
Suppose $\{n: b_n>0\}$ is a finite set. Then for any submodule $\mathcal{M}$ in $H_{\mathbf{K}}\otimes \mathbb C^N$,  $\operatorname{rank}\mathcal{M}<\infty$ if and only if there is a subspace $F\subset \mathbb{C}^N$ such that  
$$\mathcal{M}\subset H_{\mathbf{K}}\otimes F \, \text{and }\dim H_{\mathbf{K}}\otimes F/\mathcal{M}<\infty.$$
\end{thm}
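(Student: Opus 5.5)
Since only finitely many $b_n$ are nonzero, say $b_n=0$ for $n>D$, the sum in \eqref{defect operator} runs over the finitely many multi-indices $1\le|\alpha|\le D$. We use this to reduce both directions to questions about the finitely many commutators $[M_{z^\alpha}\otimes I,P_{\mathcal M^\perp}]$.

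\emph{Sufficiency.} Assume $\mathcal M\subset H_{\mathbf K}\otimes F$ with finite codimension. First, $H_{\mathbf K}\otimes F$ reduces each $M_{z_i}\otimes I$. Hence the relevant difference $P_{(H_{\mathbf K}\otimes F)\ominus\mathcal M}$ is a finite rank projection, and the decomposition
$$P_{\mathcal M^\perp}=P_{H_{\mathbf K}\otimes F^\perp}+P_{(H_{\mathbf K}\otimes F)\ominus\mathcal M}$$
has first summand commuting with $M_{z^\alpha}\otimes I$. Consequently $[M_{z^\alpha}\otimes I,P_{\mathcal M^\perp}]$ has finite rank for each $\alpha$. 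Since only finitely many $\alpha$ occur in \eqref{defect operator}, and $E_0\otimes I$ has rank $N$, the operator $\Delta_{\mathcal M}^2$ has finite rank.

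\emph{Necessity.} Assume $\operatorname{rank}\mathcal M<\infty$. Every term in \eqref{defect operator} is positive: $E_0\otimes I$ is a projection, and $[M_{z^\alpha}\otimes I,P_{\mathcal M^\perp}][P_{\mathcal M^\perp},M^*_{z^\alpha}\otimes I]=XX^*$ with $X=[M_{z^\alpha}\otimes I,P_{\mathcal M^\perp}]$. Therefore each $P_{\mathcal M}(M_{z^\alpha}\otimes I)P_{\mathcal M^\perp}(M_{z^\alpha}^*\otimes I)P_{\mathcal M}$ with $b_\alpha>0$ has finite rank. So $P_{\mathcal M^\perp}(M^*_{z^\alpha}\otimes I)|_{\mathcal M}$ has finite rank, i.e. the backward operators $S_{z^\alpha}^*$ on $\mathcal M^\perp$ are finite rank perturbations of the adjoint of the compression. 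To extract information about a single variable we need some $b_\alpha>0$, and we can take $b_1>0$: since $\mathbf K$ is a CNP kernel with $a_1=b_1$ and $a_1>0$, we get $b_1>0$. Thus $P_{\mathcal M^\perp}(M_{z_i}^*\otimes I)P_{\mathcal M}$ has finite rank for each $i$. Equivalently, $\mathcal M^\perp$ is invariant under all $M_{z_i}^*\otimes I$ modulo a finite-dimensional subspace, so $\mathcal M^\perp$ is "almost reducing".

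Next we want to show that $\mathcal N:=$ the smallest reducing subspace containing $\mathcal M^\perp$ differs from $\mathcal M^\perp$ by finite dimension. The reducing subspaces of $M_z\otimes I$ on $H_{\mathbf K}\otimes\mathbb C^N$ are $H_{\mathbf K}\otimes G$: this holds because $H_{\mathbf K}$ is irreducible, as $E_0\otimes I=I-\sum b_\alpha M_{z^\alpha}M_{z^\alpha}^*\otimes I$ lies in the generated $C^*$-algebra. Concretely, let $L=\bigcap_\alpha\ker\bigl(P_{\mathcal M}(M_{z^\alpha}\otimes I)|_{\mathcal M^\perp}\bigr)$ over $|\alpha|\le D$ with $b_\alpha>0$, which has finite codimension in $\mathcal M^\perp$. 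The plan is to show $\mathcal M$ contains a finite-codimensional subspace invariant under both $M_z$ and $M_z^*$ within $\mathcal M$. An alternative is to use that $\mathcal M$ itself is a pure $1/\mathbf K$-contractive module of rank $r$. Then $\mathcal M$ is a quotient of $H_{\mathbf K}\otimes\mathbb C^r$, and finite rank of $P_{\mathcal M^\perp}M^*\otimes I|_{\mathcal M}$ means $M_{z_i}\otimes I|_{\mathcal M}$ are essentially normal-like compressions of the ambient shift. Combining this with the identity for $\Delta_{\mathcal M}^2$ shows $\Delta_{\mathcal M}^2-P_{\mathcal M}(E_0\otimes I)P_{\mathcal M}$ is finite rank. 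Then $C^*(\mathcal M)$ modulo finite rank coincides with the ambient $C^*$-algebra on $H_{\mathbf K}\otimes\mathbb C^N$ restricted. This gives $\mathcal M\supset H_{\mathbf K}\otimes F\ominus$ (finite-dimensional), where $F=\{\eta:\ H_{\mathbf K}\otimes\mathbb C_\eta\cap\mathcal M \text{ has finite codimension in } H_{\mathbf K}\otimes \mathbb C_\eta\}$, and conversely $\mathcal M\perp H_{\mathbf K}\otimes F^\perp$ up to finite dimension. Finally we show that $\mathcal M\subset H_{\mathbf K}\otimes F$ exactly, using that the $M_z$-invariance of $\mathcal M$ forces any finite-dimensional excess to vanish: a nonzero function in $\mathcal M$ with a component outside $F$ generates an infinite-dimensional cyclic submodule.

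The main obstacle is this last structural step, namely passing from "$\mathcal M^\perp$ is almost co-invariant" to the exact form $\mathcal M\subset H_{\mathbf K}\otimes F$ with finite codimension. My first attempt would use a dimension count on homogeneous components. For large degree $n$, compare $\dim(\mathcal M^\perp\cap\mathcal P_n\otimes\mathbb C^N)$, which is controlled by the finite rank of $P_{\mathcal M^\perp}M^*P_{\mathcal M}$, with the polynomial growth $\binom{n+d-1}{d-1}$. This forces the leading Hilbert polynomial coefficient to be an integer $N-\dim F$ realized by a constant subspace.
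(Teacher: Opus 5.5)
You prove sufficiency correctly, and it is the paper's argument. Your splitting $P_{\mathcal M^\perp}=P_{H_{\mathbf K}\otimes F^\perp}+P_{(H_{\mathbf K}\otimes F)\ominus\mathcal M}$ even makes explicit why the paper may work with $P_{\mathcal N}$. Your first step for necessity is also correct: positivity of the terms in \eqref{defect operator} and $b_1=a_1>0$ show that $P_{\mathcal M^\perp}(M_{z_i}^*\otimes I)P_{\mathcal M}$ has finite rank. After that, as you acknowledge, necessity is not proved, and the routes you sketch cannot prove it.

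None of the mechanisms you invoke tells $d\ge 2$ apart from $d=1$, and for $d=1$ the statement is false. Take $b_1=1$ and $b_n=0$ for $n\ge2$. The kernel is regular, $H_{\mathbf K}=H^2(\mathbb D)$, and let $\mathcal M=\theta H^2$ with $\theta$ an infinite Blaschke product. Then:
\begin{itemize}
\item $\operatorname{rank}\Delta_{\mathcal M}=\dim(\mathcal M\ominus z\mathcal M)=1$;
\item $P_{\mathcal M^\perp}M_z^*P_{\mathcal M}$ has rank one;
\item $M_z|_{\mathcal M}$ is unitarily equivalent to $M_z$, so every ``modulo finite rank'' $C^*$-statement you propose holds;
\item yet $\mathcal M$ has infinite codimension.
\end{itemize}

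Some of your intermediate targets are false even for $d\ge 2$. Take $\mathcal M=\{f\in H_{\mathbf K}: f(0)=0\}$, which has finite rank by your own sufficiency argument. Then $\mathcal M^\perp=\mathbb C$, but the smallest reducing subspace containing it is all of $H_{\mathbf K}$. Also, $\mathcal M$ contains no nonzero reducing subspace at all. Your homogeneous dimension count does not apply to non-homogeneous submodules either. For $\mathcal M=\overline{(z_1-\frac12)H_{\mathbf K}}$, the space $\mathcal M^\perp$ contains no nonzero homogeneous polynomial of any degree.

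What is missing is a concrete way to produce elements of $\mathcal M$ from the finite-rank information, and the point where $d\ge 2$ is used. The paper does it as follows.
\begin{itemize}
\item It replaces $\mathbb C^N$ by $F=\mathbb C^N\ominus\{\eta: Q_\eta\mathcal M=0\}$. By Lemma \ref{finite codim equal polynomial}, it then suffices to show $\mathcal M\cap(\mathbb C[z_i]\otimes\mathbb C_\eta)\ne0$ for every $i$ and every $\eta\ne0$.
\item For $d=2$, it splits $\mathcal M\ominus z_1\mathcal M$, following Guo, into the elements depending on $z_2$ alone plus a finite-dimensional remainder $\Lambda_1$.
\item It uses Guo's lemma that $\dim(\mathcal M_\eta\ominus z_1\mathcal M_\eta)=\infty$ for every nonzero scalar submodule $\mathcal M_\eta$. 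This is exactly what fails for $d=1$. It implies that the Taylor coefficients of the $z_2$-only elements span $\mathbb C^N$.
\item It applies the finite rank of $P_{\mathcal M^\perp}(M_{z_2}^*\otimes I)P_{\mathcal M}$ to the infinite families $\{z_1^kf_j\}_k$. This lets one apply $M_{z_2}^*$ repeatedly without leaving $\mathcal M$, giving $h_j\in\mathcal M$ with $h_j(z_1,0)$ a nonzero element of $\mathbb C[z_1]\otimes\mathbb C_{v_j}$.
\item Finite dimensionality of the analogous remainder $\Lambda_2$ of $\mathcal M\ominus z_2\mathcal M$ turns this into a nonzero element of $\mathcal M\cap(\mathbb C[z_1]\otimes\mathbb C_{v_j})$. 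Products of these handle arbitrary $\eta$, and induction on $d$ covers $d\ge 3$.
\end{itemize}
Your proposal has no substitute for this step, so as it stands only sufficiency is established.
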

    To prove Theorem \ref{finite codim}, we need the following lemma, which may be known before, and we write here for convenience.
\begin{lem}\label{finite codim equal polynomial}
Let $F$ be a finite-dimensional Hilbert space. Then for a submodule $\mathcal{M} $ in $ H_{\mathbf{K}}\otimes F$,  $$\dim H_{\mathbf{K}}\otimes F/\mathcal{M}<\infty$$ if and only if for any $1\leq i\leq d$ and $\eta\in F\setminus \{0\}$,
   $\mathcal{M}\cap \left(\mathbb C[z_i]\otimes \mathbb{C}_\eta\right)\not=0$.
\end{lem}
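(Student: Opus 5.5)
We prove the two directions separately. The necessity is elementary linear algebra; the sufficiency reduces, by a division argument, to showing that $\mathcal{M}$ contains all vector-valued polynomials of high degree.

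\emph{Necessity.} Suppose $\dim H_{\mathbf{K}}\otimes F/\mathcal{M}<\infty$, and fix $1\leq i\leq d$ and $\eta\in F\setminus\{0\}$.
1. Consider the infinitely many vectors $z_i^k\otimes\eta$, $k\geq 0$, in $H_{\mathbf{K}}\otimes F$.
2. Their images in the finite-dimensional quotient $H_{\mathbf{K}}\otimes F/\mathcal{M}$ must be linearly dependent.
3. So there is a nonzero polynomial $p\in\mathbb{C}[z_i]$ with $p\otimes\eta\in\mathcal{M}$.

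This gives $\mathcal{M}\cap(\mathbb{C}[z_i]\otimes\mathbb{C}_\eta)\neq 0$.

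\emph{Sufficiency, first step.} Fix an orthonormal basis $\eta_1,\dots,\eta_k$ of $F$. By hypothesis, for each $i,j$ there is a nonzero one-variable polynomial $p_{ij}(z_i)$ with $p_{ij}\otimes\eta_j\in\mathcal{M}$. Since $\mathcal{M}$ is a submodule, for each $j$ it contains $I_j\otimes\eta_j$, where $I_j$ is the ideal of $\mathcal{P}_d$ generated by $p_{1j}(z_1),\dots,p_{dj}(z_d)$.

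\emph{Sufficiency, finite codimension in $\mathcal{P}_d$.} We show $\dim\mathcal{P}_d/I_j<\infty$ by division. Let $D=\max_{i}\deg p_{ij}$. Each $z_i^{D}$ reduces modulo $p_{ij}$ to a polynomial of degree $<D$ in $z_i$. Hence every monomial can be reduced modulo $I_j$ to a combination of monomials $z^\alpha$ with $\alpha_i<D$ for all $i$. These span a finite-dimensional space, so $\mathcal{P}_d=I_j+\mathcal{V}_j$ with $\dim\mathcal{V}_j<\infty$.

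\emph{Sufficiency, passing to the closure.} Since $\mathcal{P}_d$ is dense in $H_{\mathbf{K}}$, it follows that
$$
H_{\mathbf{K}}\otimes\mathbb{C}_{\eta_j}=\overline{(I_j+\mathcal{V}_j)\otimes\eta_j}\subset\mathcal{M}\cap(H_{\mathbf{K}}\otimes\mathbb{C}_{\eta_j})+\mathcal{V}_j\otimes\eta_j .
$$
Here we use that $\mathcal{M}$ is closed and that the sum of a closed subspace and a finite-dimensional subspace is closed. Summing over $j$, $H_{\mathbf{K}}\otimes F\subset\mathcal{M}+\sum_j\mathcal{V}_j\otimes\eta_j$, so $\mathcal{M}$ has finite codimension in $H_{\mathbf{K}}\otimes F$.

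\emph{Main obstacle.} The only genuinely delicate point is the passage from polynomial density to the closed-sum statement; everything else is elementary. If one prefers to avoid the closed-sum fact, an alternative is to note that $(\mathcal{M}\ominus\ldots)^\perp$ is annihilated by the ideal.

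Concretely, on the finite-codimensional image, $\mathcal{M}^\perp$ is invariant under the $M_{z_i}^*$, and $p_{ij}(M_{z_i})^*$ kills the $\eta_j$-components. Then standard arguments show $\mathcal{M}^\perp$ sits inside a finite-dimensional span of derivatives of kernel functions at the finitely many common zeros. The common zeros are finite because each $z_i$ is restricted to the roots of $p_{ij}$. I would use the direct density argument first.
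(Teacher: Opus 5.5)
Your proof is correct and follows the paper's structure. For necessity, you replace the paper's appeal to \cite[Theorem 2.2.5]{CG} with a direct linear-dependence argument on the images of $z_i^k\otimes\eta$ in the finite-dimensional quotient. For sufficiency, you use the same submodule generated by the $p_{ij}(z_i)\otimes\eta_j$, and you supply the division-plus-closure details that the paper just calls obvious. The closing kernel-function alternative is unnecessary; it would also need the caveat that only common zeros lying in $\mathbb{B}_d$ can contribute. The direct density argument is the one to keep.
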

\begin{proof} Let $$H_{z_i}=\{f\in H_{\mathbf{K}}:~ f\text{ is a function on the variable }z_i\}.$$
For necessity, suppose $\dim H_{\mathbf{K}}\otimes F/\mathcal{M}<\infty$, then for each $1\leq i\leq d$ and $\eta\in F$,  $$\dim H_{z_i}\otimes \mathbb{C}_\eta/(\mathcal{M}\cap \left(H_{z_i}\otimes \mathbb{C}_\eta\right))<\infty.$$ By \cite[Theorem 2.2.5]{CG}, ${\cal M}\cap \left(H_{z_i}\otimes \mathbb C_\eta\right)$ contains a nonzero polynomial. 
  For sufficiency, suppose  for each $1\leq i\leq d$ and $1\leq j\leq \dim F$,  the space $\mathcal{M}\cap \left(H_{z_i}\otimes \mathbb{C}_{e_j}\right)$ contains a nonzero polynomial $p_{i,j}(z_i)\otimes e_j$. Let ${\cal M}_0$ be the submodule generated by $\{p_{i,j}(z_i)\otimes e_j; i=1,\cdots,d,\, j=1,\cdots, \dim F\}$. Obviously, $\mathcal{M}_0 $ is a finitely codimensional submodule in $H_{\mathbf{K}}$ and $\mathcal{M}_0\subset\cal M$, and hence $\cal M$ is of finite codimension. 
\end{proof}

$Proof~of~Theorem~\ref{finite codim}$. Sufficiency. Write $\mathcal{N}=H_{\mathbf{K}}\otimes F\ominus\mathcal{M}$, then $\dim \mathcal{N}<\infty$. By the assumption,  $\{n: b_n>0\}$ is a finite set. Combining with the representation (\ref{defect operator}),
there is an $N_0$ such that 
 $$
 \Delta_{\mathcal{M}}^{2}=P_{\mathcal{M}}\left(E_0+\sum_{|\alpha|\leq N_0}b_{\alpha}[M_{z^\alpha}\otimes I,P_{\mathcal{N}}][P_{\mathcal{N}},M_{z^\alpha}^{*}\otimes I]\right)P_{\mathcal{M}}.
 $$
It follows from  $\dim \mathcal{N}<\infty$ that for $\alpha\in\mathbb N^d_+$, $[M_{z^\alpha}\otimes I,P_{\mathcal{N}}]$ are of finite rank, and hence
$$
\dim \operatorname{ran}\Delta_\mathcal{M}^2\leq \dim F+\sum_{|\alpha|\leq N_0}\dim \operatorname{ran}[M_{z^\alpha}\otimes I,P_{\mathcal{N}}][P_{\mathcal{N}},M_{z^\alpha}^{*}\otimes I]<\infty,
$$
which implies that $\operatorname{rank}\Delta_\mathcal{M}<\infty$.

For necessity, assume  $\operatorname{rank} \Delta_\mathcal{M}<\infty$.  For $\eta\in \mathbb{C}^N\setminus\{0\}$, let $Q_\eta=I\otimes P_{\mathbb C_\eta}$, where $P_{\mathbb C_\eta}$ is the orthogonal projection from $\mathbb C^N$ onto $\mathbb C_\eta$. Let $E=\{\eta\in \mathbb{C}^N: Q_\eta \mathcal{M}=0\}$ and $F=\mathbb{C}^N\ominus E$, then it is easy to see that $\mathcal{M}$ is of finite defect as a submodule of $H_{\mathbf{K}}\otimes F$. Without loss of generality, assume that $F=\mathbb{C}^N$.

\noindent\emph{Claim.} For all $ 1\leq i\leq d, \eta\in \mathbb{C}^N\setminus\{0\}$, $\mathcal{M}\cap \left(H_{z_i}\otimes \mathbb{C}_\eta\right)$ contains a nonzero polynomial.

First, the claim will be proved for $d=2$, and then we will use the induction argument to finish the general case. 

\noindent\emph{Step 1. Proof of the claim for $d=2$}. 

Decompose $\mathcal{M}\ominus z_1\mathcal{M}$ and $\mathcal{M}\ominus z_2\mathcal{M}$ as
\begin{equation}\label{M-z1M}
\mathcal{M}\ominus z_1\mathcal{M}=((\mathcal{M}\ominus z_1\mathcal{M})\cap (H_{z_2}\otimes \mathbb{C}^N))\oplus \Lambda_1,
\mathcal{M}\ominus z_2\mathcal{M}=((\mathcal{M}\ominus z_2\mathcal{M})\cap (H_{z_1}\otimes \mathbb{C}^N))\oplus \Lambda_2.
\end{equation}
Similar to the proof of \cite[Theorem 3.1]{Gu}, we have
\begin{equation}\label{dim lambda 1}
\dim \Lambda_i<\infty,~i=1,2.
\end{equation}
Next we prove that for all $\eta\in \mathbb{C}^N\setminus \{0\}$, $Q_\eta ((\mathcal{M}\ominus z_1\mathcal{M})\cap (H_{z_2}\otimes \mathbb{C}^N))\neq 0$. Suppose that there exists $\eta\in \mathbb{C}^N\setminus \{0\}$ such that $Q_\eta ((\mathcal{M}\ominus z_1\mathcal{M})\cap (H_{z_2}\otimes \mathbb{C}^N))= 0$, then by \eqref{M-z1M}, 
$$
Q_\eta\mathcal{M}\subset \overline{z_1Q_\eta\mathcal{M}}+Q_\eta\Lambda_1. 
$$
Write $\mathcal{M}_\eta=\overline{Q_\eta\mathcal{M}}$, then
$$
\mathcal{M}_\eta\subset \overline{z_1\mathcal{M}_\eta}+Q_\eta\Lambda_1,
$$
which implies that 
\begin{equation}\label{M-z1Mdim<inf}
\dim \mathcal{M}_\eta/\overline{z_1\mathcal{M}_\eta}<\infty. 
\end{equation}
Since $\mathcal{M}_\eta$ is a submodule of $H_{\mathbf{K}}\otimes\mathbb{C}_\eta$, by \cite[Lemma 3.3]{Gu}, $\dim \mathcal{M}_\eta\ominus z_i\mathcal{M}_\eta=\infty$ for each $1\leq i\leq 2$, which contradicts \eqref{M-z1Mdim<inf}. Hence 
\begin{equation}\label{3.1.star}
  Q_\eta( (\mathcal{M}\ominus z_1\mathcal{M})\cap (H_{z_2}\otimes \mathbb{C}^N))\neq 0,~\forall\eta\in \mathbb{C}^N\setminus \{0\}
\end{equation}
For $f\in (\mathcal{M}\ominus z_1\mathcal{M})\cap (H_{z_2}\otimes \mathbb{C}^N)$, write
$$
f=\sum_{n=0}^{\infty}z_2^n\otimes v_n^{(f)}.
$$
Let
$$
Q=\{v_n^{(f)}:~f\in (\mathcal{M}\ominus z_1\mathcal{M})\cap (H_{z_2}\otimes \mathbb{C}^N),~n\geq 0\}.
$$
Then it is easy to see that $Q=\mathbb{C}^N$. Otherwise, there exists $\eta\in \mathbb{C}^N\setminus\{0\}$ such that
$$
z_2^n\otimes\eta\perp(\mathcal{M}\ominus z_1\mathcal{M})\cap (H_{z_2}\otimes \mathbb{C}^N),~\forall n\geq 0.
$$
Hence $H_{z_2}\otimes\mathbb{C}_\eta\perp (\mathcal{M}\ominus z_1\mathcal{M})\cap (H_{z_2}\otimes \mathbb{C}^N)$, which implies that
$$
Q_\eta ((\mathcal{M}\ominus z_1\mathcal{M})\cap (H_{z_2}\otimes \mathbb{C}^N))=0,
$$
contradicting \eqref{3.1.star}. Therefore there exist $f_1,\cdots,f_N\subset (\mathcal{M}\ominus z_1\mathcal{M})\cap (H_{z_2}\otimes \mathbb{C}^N)$ and $n_1,\cdots,n_N\geq 0$ such that 
\begin{equation}\label{span fj=CN}
 \operatorname{span}\{ v_{n_j}^{(f_j)}:~1\leq j\leq N\}=\mathbb{C}^N.
\end{equation}
Now we show that for fixed $1\leq j\leq N$, there is an $h_j\in \cal M$ such that $h_j(z_1,0)$ is a nonzero polynomial in $\mathbb C[z_1]\otimes \mathbb{C}_{v_{n_j}^{(f_j)}}$. We prove it by considering two cases.\\
\textbf{Case 1. } $n_j=0$.\par
We can set $h_j(z_1,z_2)=f_j(z_2)$, then $h_j(z_1,0)=f_j(0)=v_{n_j}^{(f_j)}\neq 0$.\\
\textbf{Case 2. } $n_j>0$.\par
By (\ref{1.a intro}), it is easy to see that
\begin{equation}\label{b1=a1>0}
  b_1=a_1>0.
\end{equation}
Then by (\ref{defect operator}), $\dim\operatorname{span}\{P_{\mathcal{M}^\perp}(M_{z_2}^*\otimes I) z_1^{k}f_j|k\geq 1\}<\infty$.
Set
$$
m_0=\dim\operatorname{span}\{P_{\mathcal{M}^\perp}(M_{z_2}^*\otimes I) z_1^{k}f_j:~k\geq 1\}+1.
$$
Thus for each $l\in\mathbb{N}^+$, there are $\lambda_1^{(l,1)},\lambda_2^{(l,1)},\cdots,\lambda_{m_0}^{(l,1)}$, which are not all zero, such that
\begin{equation}\label{4.a}
  P_{\mathcal{M}^\perp}(M_{z_2}^*\otimes I)\left(\sum_{i=1}^{m_0}\lambda_i^{(l,1)}z_1^{m_0(l-1)+i}f_j\right)=\sum_{i=1}^{m_0}\lambda_i^{(l,1)}P_{\mathcal{M}^\perp}(M_{z_2}^*\otimes I)z_1^{m_0(l-1)+i}f_j=0.
\end{equation}
Write 
\begin{equation}\label{4.c}
  g_l^{(1)}=\sum\limits_{i=1}^{m_0}\lambda_i^{(l,1)}z_1^{m_0(l-1)+i}.
\end{equation}
Since $f_j\in \cal M$, we have $g_l^{(1)}f_j\in \mathcal{M}$. By (\ref{4.a}),
$$
(M_{z_2}^*\otimes I)g_l^{(1)}f_j\in \mathcal{M}.
$$ 
Noticing  $\operatorname{rank}\Delta_{\mathcal{M}}<\infty$ and $b_1=a_1\neq 0$, by (\ref{defect operator}) again, 
$
 \operatorname{span}\{P_{\mathcal{M}^\perp}(M_{z_2}^*\otimes I)^2g_k^{(1)}f_j|k\geq 1\}
$ is of finite dimension.
Set
$$
m_1=\dim \operatorname{span}\{P_{\mathcal{M}^\perp}(M_{z_2}^*\otimes I)^2g_k^{(1)}f_j:~k\geq 1\}+1.
$$
Thus for each $l\in\mathbb{N}_+$, there are $\lambda_1^{(l,2)},\lambda_2^{(l,2)},\cdots,\lambda_{m_1}^{(l,2)}$, which are not all zero, such that
\begin{equation}\label{PM g_t^2}
  P_{\mathcal{M}^\perp}(M_{z_2}^*\otimes I)^2\sum_{i=1}^{m_1}\lambda_i^{(l,2)}g_{m_1(l-1)+i}^{(1)}f_j=0.
\end{equation}
Write 
$$
g_l^{(2)}=\sum\limits_{i=1}^{m_1}\lambda_i^{(l,2)}g_{m_1(l-1)+i}^{(1)}.
$$
By (\ref{4.c}), it is easy to verify that $\{g_{m_1(l-1)+i}^{(1)}\}_{i\geq 1}$ are mutually orthogonal. Hence $g_l^{(2)}\neq 0$.
Then by (\ref{PM g_t^2}), we have
$$
g_l^{(2)}f_j\in \mathcal{M}~\text{and}~(M_{z_2}^*\otimes I)^2g_l^{(2)}f_j\in \mathcal{M}.
$$
By repeating the above operation $n_j$ times, we obtain 
$g_l^{(n_j)}\in \mathbb{C}[z_1]$ satisfying
\begin{equation}\label{ga}
  g_l^{(n_j)}f_j\in \mathcal{M},~g_l^{(n_j)}\neq 0~\text{and}~(M_{z_2}^*\otimes I)^{n_j}g_l^{(n_j)}f_j\in \mathcal{M}.
\end{equation}
Since $(M_{z_2}^*)^{n_j}g_l^{(n_j)}z_2^{k}=0$ for $k<n_j$ and $((M_{z_2}^*)^{n_j}g_l^{(n_j)}z_2^{k})(z_1,0)=0$ for $k>n_j$, we have
\begin{equation}\label{3.1.12}
((M_{z_2}^*\otimes I)^{n_j}g_l^{(n_j)}f_j)(z_1,0)=((M_{z_2}^*)^{n_j}g_l^{(n_j)}z_2^{n_j})\otimes v_{n_j}^{(f_j)}\in \mathbb{C}[z_1]\otimes \mathbb{C}_{v_{n_j}^{(f_j)}}.
\end{equation}
Then from $\left\langle (M_{z_2}^*)^{n_j}g_l^{(n_j)}z_2^{n_j}, g_l^{(n_j)}\right\rangle =\|z_2^{n_j}g_l^{(n_j)}\|^2>0$ and $(M_{z_2}^*)^{n_j}g_l^{(n_j)}z_2^{n_j}\in \mathbb{C}[z_1]$, we have
\begin{equation}\label{eq:3.13}
\left((M_{z_2}^*)^{n_j}g_l^{(n_j)}z_2^{n_j}\right)(z_1,0)\neq 0.
\end{equation}
Set $h_j=(M_{z_2}^*\otimes I)^{n_j}g_l^{(n_j)}f_j$. Then by \eqref{3.1.12} and \eqref{eq:3.13},
$$
h_j(z_1,0)\in \mathbb{C}[z_1]\otimes\mathbb{C}_{v_{n_j}^{(f_j)}}~\text{and}~h_j(z_1,0)\neq 0.
$$

Let 
$$
G_j=\{ph_j:~p\in \mathbb{C}[z_1]\}.
$$
By (\ref{dim lambda 1}), $P_{\Lambda_2}|_{G_j}:G_j\rightarrow \Lambda_2$ is of finite rank. So there exists a nonzero polynomial $\tilde{p}_j\in\mathbb{C}[z_1]$ such that
\begin{equation}\label{P2 ph=0}
  P_{\Lambda_2}(\tilde{p}_jh_j)=0.
\end{equation}
Thus by (\ref{M-z1M}) and (\ref{P2 ph=0}),
$$
\tilde{p}_jh_j\in\left((\mathcal{M}\ominus z_2\mathcal{M})\cap \left(H_{z_1}\otimes\mathbb{C}_{v_{n_j}^{(f_j)}}\right)\right)\oplus \overline{z_2\mathcal{M}}=\left(\mathcal{M}\cap \left(H_{z_1}\otimes\mathbb{C}_{v_{n_j}^{(f_j)}}\right)\right)\oplus \overline{z_2\mathcal{M}},
$$
therefore there exist $\varphi_j\otimes v_{n_j}^{(f_j)}\in \mathcal{M}\cap \left(H_{z_1}\otimes\mathbb{C}_{v_{n_j}^{(f_j)}}\right)$ and $\zeta_j\in \overline{z_2\mathcal{M}}$ such that
$$
\tilde{p}_j(z_1)h_j(z_1,z_2)=\varphi_j(z_1)\otimes v_{n_j}^{(f_j)}+\zeta_j(z_1,z_2).
$$
Let $z_2=0$, then
$
\varphi_j(z_1)\otimes v_{n_j}^{(f_j)}=\tilde{p}_j(z_1)h_j(z_1,0)\neq 0,
$
which implies that 
$$
\varphi_j(z_1)\otimes v_{n_j}^{(f_j)}\in \left(\mathcal{M}\cap \left(\mathbb{C}[z_1]\otimes\mathbb{C}_{v_{n_j}^{(f_j)}}\right)\right)\setminus\{0\}.
$$
By \eqref{span fj=CN}, for $\eta\in \mathbb{C}^N\setminus\{0\}$, write $\eta=\sum\limits_{j=1}^{N}\eta_jv_{n_j}^{(f_j)}$, then
$$
\left(\prod_{j=1}^{N}\varphi_j\right)\otimes\eta
=\sum_{j=1}^{N}\left(\eta_j\prod_{\substack{k=1\\k\neq j}}^{N}\varphi_k\right)\cdot\left(\varphi_j\otimes v_{n_j}^{(f_j)}\right)\in \mathcal{M}\cap (\mathbb{C}[z_1]\otimes \mathbb{C}_\eta).
$$
Since $\left(\prod\limits_{j=1}^{N}\varphi_j\right)\neq 0$, we have for all $\eta\in \mathbb{C}^N\setminus\{0\}$, $\mathcal{M}\cap (\mathbb{C}[z_1]\otimes \mathbb{C}_\eta)\neq \{0\}$. 
By the same reasoning, for all $\eta\in \mathbb{C}^N\setminus\{0\}$, $\mathcal{M}\cap (\mathbb{C}[z_2]\otimes \mathbb{C}_\eta)\neq \{0\}$. Consequently, the claim holds when $d=2$. 

\noindent\emph{Step 2. Proof of the claim for $d\geq 3$}.

Now using induction, assume the claim holds for the $d-1$ dimensional case. Let 
$$
H_{z_{i_1},\cdots,z_{i_k}}=\{f\in H_{\mathbf{K}}:~f\text{ is a function on the variable }z_{i_1},\cdots,z_{i_k}\}.
$$
Decompose $\mathcal{M}\ominus z_d\mathcal{M}$ as
\begin{equation}\label{M-zdM}
\mathcal{M}\ominus z_d\mathcal{M}=((\mathcal{M}\ominus z_d\mathcal{M})\cap (H_{z_1,\cdots,z_{d-1}}\otimes\mathbb{C}^N))\oplus \Lambda_d
\end{equation}
For $\eta\neq 0$, since $\dim\mathcal{M}/z_d\mathcal{M}\geq \dim \overline{Q_\eta\mathcal{M}}/\overline{z_dQ_\eta\mathcal{M}}$,
similar to the proof of \cite[Theorem 3.1]{Gu}, we have
\begin{equation}\label{dim lambda d}
\dim \Lambda_d<\infty
\end{equation}
By \cite[Lemma 3.3]{Gu}, $\dim \mathcal{M}\ominus z_d\mathcal{M}=\infty$, which implies that
$$
(\mathcal{M}\ominus z_d\mathcal{M})\cap (H_{z_1,\cdots,z_{d-1}}\otimes\mathbb{C}^N)\neq 0.
$$
Then $\mathbf{M}_{d}:=(\mathcal{M}\ominus z_d\mathcal{M})\cap (H_{z_1,\cdots,z_{d-1}}\otimes\mathbb{C}^N)$ can be regarded as a nonzero submodule of $H_{z_1,\cdots,z_{d-1}}\otimes\mathbb{C}^N$. Notice that
$$
[P_{\mathbf{M}_{d}},M_{z^\alpha}]=P_{\mathbf{M}_{d}}M_{z^\alpha}P_{\mathbf{M}_{d}^\perp}
=P_{\mathbf{M}_{d}}P_{\mathcal{M}}M_{z^\alpha}P_{\mathcal{M}^\perp+\Lambda_d}P_{\mathbf{M}_{d}^\perp}.
$$
Hence 
$$
\operatorname{rank}b_\alpha[P_{\mathbf{M}_{d}},M_{z^\alpha}]\leq \operatorname{rank}b_\alpha[P_{\mathcal{M}},M_{z^\alpha}]+\dim\Lambda_d.
$$
Then from $\operatorname{rank}\Delta_{\mathcal{M}}<\infty$ and $\{n: b_n>0\}$ is a finite set, by \eqref{defect operator}, $\operatorname{rank}\Delta_{\mathbf{M}_{d}}<\infty$. By the induction hypothesis applied to \(\mathbf M_d\),  there are nonzero polynomials $p_{i,j}(z_i)\otimes e_j \in \mathbf{M}_{d}\subset\mathcal{M}$ for each $1\leq i\leq d-1$ and $1\leq j\leq N$. Let $\mathbf{M}_{1}:=(\mathcal{M}\ominus z_1\mathcal{M})\cap (H_{z_2,\cdots,z_{d}}\otimes\mathbb{C}^N)$. Using the same reasoning, we get nonzero polynomials $p_{d,j}(z_d)\otimes e_j \in \mathbf{M}_{1}\subset\mathcal{M}$ for $1\leq j\leq N$. Then for all $1\leq i\leq d$ and $\eta=\sum\limits_{j=1}^{N}\eta_je_j\in \mathbb{C}^N$,
$$
\left(\prod_{j=1}^{N}p_{i,j}\right)\otimes\eta
=\sum_{j=1}^{N}\left(\eta_j\prod_{\substack{k=1\\k\neq j}}^{N}p_{i,k}\right)\cdot(p_{i,j}\otimes e_j)\in \mathcal{M}\cap (\mathbb{C}[z_i]\otimes \mathbb{C}_\eta).
$$
For each $1\leq i\leq d$,
since $\left(\prod\limits_{j=1}^{N}p_{i,j}\right)\neq 0$, we have for all $\eta\in \mathbb{C}^N\setminus\{0\}$, $\mathcal{M}\cap (\mathbb{C}[z_i]\otimes \mathbb{C}_\eta)\neq \{0\}$.
By induction the claim holds for $d<\infty$.

Hence by Lemma \ref{finite codim equal polynomial}, $\dim H_{\mathbf{K}}\otimes\mathbb{C}^N/\mathcal{M}<\infty$.

\hfill \qedsymbol
\begin{rem}
  The argument extends without difficulty to the case where $H_{\mathbf{K}}\otimes \mathbb{C}^N$ is replaced by $H_{\mathbf{K}}\otimes H$ for any (infinite-dimensional) Hilbert space $H$.
\end{rem}

To continue, we need the following lemma.
\begin{lem}\label{Delte Meta le inf}
  Let $\mathcal{M}$ be a submodule of $H_{\mathbf{K}}\otimes \mathbb{C}^N$. If $\operatorname{rank}\Delta_{\mathcal{M}}<\infty$, then for all $\eta\in \mathbb{C}^N\setminus\{0\}$, $\operatorname{rank}\Delta_{\mathcal{M}_{\eta}}<\infty$.
\end{lem}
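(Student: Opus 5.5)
We prove the lemma by comparing the defect operators of $\mathcal{M}$ and $\mathcal{M}_\eta=\overline{Q_\eta\mathcal{M}}$, regarded as a submodule of $H_{\mathbf{K}}\otimes\mathbb{C}_\eta\cong H_{\mathbf{K}}$, through the $1/\mathbf K$-contraction structure. We use the convention of \eqref{defect frac} applied to the restriction $\mathbb T=(M_{z_1}\otimes I,\dots,M_{z_d}\otimes I)|_{\mathcal M}$. Thus
$$\Delta_{\mathcal M}^2=P_{\mathcal M}-\sum_{\alpha\in\mathbb N^d_+}b_\alpha (M_{z^\alpha}\otimes I)P_{\mathcal M}(M_{z^\alpha}^*\otimes I)\big|_{\mathcal M}.$$
Equivalently, $\Delta_{\mathcal M}^2$ is the compression to $\mathcal M$ of the projection $P_{\mathcal M}-\sum_\alpha b_\alpha M_{z^\alpha}P_{\mathcal M}M_{z^\alpha}^*$. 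By the identity behind \eqref{defect operator}, $\operatorname{ran}\Delta_{\mathcal M}$ is finite dimensional exactly when the subspace $\overline{\sum_\alpha b_\alpha z^\alpha\mathcal M}$ has finite codimension in $\mathcal M$ in the appropriate (weighted) sense. So finite rank of the defect means that $\mathcal M$ is ``finitely generated modulo the shifted part''.

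\emph{Step 1.} We write the defect via Beurling-type data. By the McCullough--Trent theorem \cite[Theorem 0.7]{trent}, $P_{\mathcal M}=\Phi\Phi^*$ for an inner multiplier $\Phi:H_{\mathbf K}\otimes E\to H_{\mathbf K}\otimes\mathbb C^N$. The standard computation, using $I-\sum b_\alpha M_{z^\alpha}M_{z^\alpha}^*=E_0$ from \cite[Lemma 2.1]{CNP character}, gives
$$P_{\mathcal M}-\sum_\alpha b_\alpha M_{z^\alpha}P_{\mathcal M}M_{z^\alpha}^*=\Phi(E_0\otimes I_E)\Phi^*.$$
Hence
$$\Delta_{\mathcal M}^2=\Phi(E_0\otimes I)\Phi^*|_{\mathcal M}, \qquad \operatorname{rank}\Delta_{\mathcal M}=\dim\operatorname{span}\{\Phi(1\otimes e)\}.$$
After replacing $E$ by the minimal space (discarding $\ker\Phi(\cdot)(1\otimes\cdot)$), $E$ may be taken finite dimensional whenever $\operatorname{rank}\Delta_{\mathcal M}<\infty$. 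This yields finitely many multipliers $\varphi_1,\dots,\varphi_r\in\mathcal M\cap(\operatorname{Mult}(H_{\mathbf K})\otimes\mathbb C^N)$ such that $\mathcal M=\overline{\sum_i\operatorname{Mult}(H_{\mathbf K})\varphi_i}$, i.e. $\mathcal M$ is generated as a multiplier submodule by $r$ elements.

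\emph{Step 2.} Then $Q_\eta\varphi_i=\psi_i\otimes\eta$ with $\psi_i\in\operatorname{Mult}(H_{\mathbf K})$, and $\mathcal M_\eta$ is the closed submodule generated by $\psi_1,\dots,\psi_r$. Indeed, $Q_\eta$ commutes with multipliers and is bounded, so $Q_\eta\mathcal M\subset\overline{\sum\operatorname{Mult}\,\psi_i}\otimes\eta\subset\mathcal M_\eta$. Conversely, each $\psi_i\otimes\eta=Q_\eta\varphi_i\in Q_\eta\mathcal M$.

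\emph{Step 3.} It remains to show that a submodule of $H_{\mathbf K}$ generated by finitely many multipliers $\psi_1,\dots,\psi_r$ has finite-rank defect. Write $\mathcal N=\mathcal M_\eta$ and $D=P_{\mathcal N}-\sum_\alpha b_\alpha M_{z^\alpha}P_{\mathcal N}M_{z^\alpha}^*$. We will show $\operatorname{ran}\Delta_{\mathcal N}^2\subset P_{\mathcal N}\operatorname{span}\{\psi_i\}$. It suffices to prove that $D$ annihilates $\mathcal N\ominus\operatorname{span}\{\psi_i\}$ after compression, equivalently $\langle Dx,y\rangle=0$ whenever $y\in\mathcal N$ is orthogonal to every $\psi_i$.

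Here we expect the main obstacle: the operator $\sum b_\alpha M_{z^\alpha}P_{\mathcal N}M_{z^\alpha}^*$ is not obviously controlled by the generators when $b_\alpha$ involves many $\alpha$. The first thing we would try is the range inclusion $\operatorname{ran}\Delta_{\mathcal N}^2\subset\overline{\mathcal N\ominus\bigvee_\alpha z^\alpha\mathcal N}$ together with
$$\mathcal N=\overline{\operatorname{span}\{\psi_i\}+\textstyle\sum_{j}z_j\mathcal N}.$$
This identity follows because each multiplier $m$ satisfies $m=m(0)+\sum_j z_j m_j$ with $m_j$ multipliers; this is the Gleason-type decomposition available for CNP multipliers via the row contraction $(\sqrt{b_\alpha}M_{z^\alpha})$. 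It gives $\dim\mathcal N\ominus\overline{\sum_j z_j\mathcal N}\le r$.

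If that route is insufficient, we fall back on the direct approach: $\Delta_{\mathcal N}^2=\Psi(E_0\otimes I)\Psi^*$ for the inner multiplier $\Psi$ of $\mathcal N$. We then compare $\Psi$ with the row $(\psi_1,\dots,\psi_r)$ via the factorization $(\psi_1,\dots,\psi_r)=\Psi\Gamma$ for some multiplier $\Gamma$ given by the commutant lifting theorem. From this factorization we bound $\operatorname{rank}\Delta_{\mathcal N}\le r$, using that $\operatorname{ran}\Psi(E_0\otimes I)\Psi^*$ lies in the span of the constant-term images.
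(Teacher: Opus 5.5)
\textbf{There is a genuine gap: the claim in Step 3 is false.} Your Steps 1 and 2 are fine. After discarding $\ker\Phi(1\otimes\cdot)$, $\mathcal M$ is generated by the finitely many multipliers $\varphi_i=\Phi(1\otimes e_i)$, and $\mathcal M_\eta=[\psi_1,\dots,\psi_r]\otimes\eta$. But Step 3 asserts that a submodule of $H_{\mathbf K}$ generated by finitely many multipliers has finite-rank defect, and that is not true.

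Take the Drury--Arveson kernel on $\mathbb B_2$, with $a_n=1$, $b_1=1$ and $b_n=0$ for $n\ge 2$; it satisfies (I)--(III). Let $\mathcal N=[z_1]=\overline{\operatorname{span}}\{z^\alpha:\alpha_1\ge 1\}$. Then
\begin{itemize}
\item $\mathcal N^\perp=\overline{\operatorname{span}}\{z_2^n:n\ge 0\}$;
\item $M_{z_1}^*(z_1z_2^n)=\frac{1}{n+1}z_2^n\in\mathcal N^\perp$;
\item $M_{z_2}^*(z_1z_2^n)=\frac{n}{n+1}z_1z_2^{n-1}\in\mathcal N$.
\end{itemize}
So \eqref{defect operator} gives $\Delta_{\mathcal N}^2(z_1z_2^n)=\frac{1}{n+1}z_1z_2^n$ for every $n$. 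Hence $\operatorname{rank}\Delta_{\mathcal N}=\infty$, although $\mathcal N$ is generated by a single polynomial. The paper's Theorems \ref{finite codim} and \ref{M=Hk} say the same thing in general: finite defect is far stronger than finite generation.

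Both routes you sketch for Step 3 break on this example.
\begin{itemize}
\item The inclusion $\operatorname{ran}\Delta_{\mathcal N}^2\subset\mathcal N\ominus\bigvee_{\alpha\ne 0}z^\alpha\mathcal N$ runs the wrong way. The wandering subspace $\mathcal N\ominus\overline{\sum_j z_j\mathcal N}=\mathbb C z_1$ sits inside the eigenspace of $\Delta_{\mathcal N}^2$ for eigenvalue $1$. Meanwhile $\operatorname{ran}\Delta_{\mathcal N}^2$ is infinite dimensional.
\item Any inner multiplier $\Psi$ with $\Psi\Psi^*=P_{\mathcal N}$ satisfies $\Delta_{\mathcal N}^2=\Psi(E_0\otimes I)\Psi^*$. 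So $\Psi$ has at least $\operatorname{rank}\Delta_{\mathcal N}$ columns, and no factorization $(\psi_1,\dots,\psi_r)=\Psi\Gamma$ can bound this rank by $r$.
\end{itemize}
The heuristic in your first paragraph is the source of the error. It says finite defect amounts to finite codimension of $\overline{\sum_\alpha b_\alpha z^\alpha\mathcal M}$ in $\mathcal M$, but here that codimension is $1$ while the defect has infinite rank. Reducing to ``finitely generated by multipliers'' throws away exactly the information the lemma needs.

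\textbf{The missing idea is to transport the kernel of the defect, not generators.} From \eqref{defect operator}, for $f\in\mathcal M$,
\[
\langle\Delta_{\mathcal M}^2f,f\rangle=\|E_0f\|^2+\sum_{\alpha}b_\alpha\|P_{\mathcal M^\perp}(M_{z^\alpha}\otimes I)^*f\|^2 .
\]
Therefore $\ker\Delta_{\mathcal M}=\{f\in\mathcal M:\ E_0f=0,\ (M_{z^\alpha}\otimes I)^*f\in\mathcal M \text{ whenever } b_\alpha\ne 0\}$, and the same description holds for $\mathcal M_\eta$.

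The map $Q_\eta$ commutes with $E_0$ and with every $M_{z^\alpha}^*\otimes I$, and it sends $\mathcal M$ into $\mathcal M_\eta$. Hence $Q_\eta\ker\Delta_{\mathcal M}\subset\ker\Delta_{\mathcal M_\eta}$.

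Since $\ker\Delta_{\mathcal M}$ has finite codimension in $\mathcal M$, write $\mathcal M=\ker\Delta_{\mathcal M}+L$ with $\dim L<\infty$. The subspace $\overline{Q_\eta\ker\Delta_{\mathcal M}}+Q_\eta L$ is closed and contains $Q_\eta\mathcal M$, so $\mathcal M_\eta$ lies inside it. This gives $\dim\mathcal M_\eta/\ker\Delta_{\mathcal M_\eta}\le\dim L<\infty$. This is the paper's argument, and it needs no Beurling-type machinery.
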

\begin{proof}
By \eqref{defect operator}, we have
\begin{equation}\label{3.3.star4}
\langle \Delta_{\mathcal{M}}^2 f, f \rangle 
= \| E_0 f \|^2 + \sum\limits_{\alpha\in\mathbb{N}_+^{d}} \|\sqrt{b_\alpha} P_{\mathcal{M}^\perp} (M_{z^\alpha}\otimes I)^* f \|^2,~f\in\mathcal{M},
\end{equation}
and
\begin{equation}\label{3.3.star5}
\langle \Delta_{\mathcal{M}_\eta}^2 f, f \rangle 
= \| E_0 f \|^2 + \sum\limits_{\alpha\in\mathbb{N}_+^{d}} \|\sqrt{b_\alpha} P_{\mathcal{M}_\eta^\perp} (M_{z^\alpha}\otimes I)^* f \|^2,~f\in\mathcal{M}_\eta.
\end{equation}
For \(f \in \ker \Delta_{\mathcal{M}}\), by \eqref{3.3.star4}, $E_0f=0$ and $\sqrt{b_\alpha}(M_{z^\alpha}\otimes I)^* f\in \mathcal{M}$. Hence
$$
E_0 (Q_\eta f) = Q_\eta (E_0 f) = 0
$$
and
$$
\sqrt{b_\alpha}(M_{z^\alpha}\otimes I)^* (Q_\eta f) = Q_\eta (\sqrt{b_\alpha}(M_{z^\alpha}\otimes I)^* f) \in Q_\eta \mathcal{M} \subset \mathcal{M}_\eta,~\forall \alpha\in\mathbb{N}_+^{d},
$$
Then by $\eqref{3.3.star5}$, \(Q_\eta f \in \ker \Delta_{\mathcal{M}_\eta}\). Therefore 
\begin{equation}\label{3.3.1}
Q_\eta \ker \Delta_{\mathcal{M}} \subset \ker \Delta_{\mathcal{M}_\eta}.
\end{equation}
Since $\operatorname{rank}\Delta_{\mathcal{M}}<\infty$, $\dim \mathcal{M}/\ker\Delta_{\mathcal{M}}<\infty$. Thus by \eqref{3.3.1},
$$
\dim \mathcal{M}_\eta/\ker \Delta_{\mathcal{M}_\eta}
\leq \dim \overline{Q_\eta\mathcal{M}}/\overline{Q_\eta\ker \Delta_{\mathcal{M}}}
\leq \dim (Q_\eta\mathcal{M})/(Q_\eta\ker\Delta_{\mathcal{M}})
\leq \dim \mathcal{M}/\ker\Delta_{\mathcal{M}}<\infty,
$$ 
which implies that $\operatorname{rank}\Delta_{\mathcal{M}_{\eta}}<\infty$.
\end{proof}
\begin{lem}\label{no polynomial}
Suppose that $\{n: b_n>0\}$ is an infinite set. Let $F=\mathbb{C}^N\ominus\{\zeta\in \mathbb{C}^N:~\mathcal{M}_\zeta= 0\}$. If $\cal M$ is a submodule in $H_{\mathbf{K}}\otimes \mathbb{C}^N$ of finite defect, then 
\begin{itemize}
\item[1.] for $\forall \eta \in F\setminus\{0\}$, $P_{\cal M} (1\otimes\eta)\not =0$;
\item[2.] $(H_{\mathbf{K}}\otimes F)\ominus \mathcal{M}$ has no nonzero polynomial.
\end{itemize}
\end{lem}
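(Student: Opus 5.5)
The plan is to reduce part~1 to a scalar statement using Lemma~\ref{Delte Meta le inf}, and then to derive part~2 from part~1 by applying a backward shift to the top-degree term of the polynomial.

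\emph{Part 1.} Suppose, towards a contradiction, that $\eta\in F\setminus\{0\}$ satisfies $P_{\mathcal M}(1\otimes\eta)=0$, i.e.\ $1\otimes\eta\perp\mathcal M$.

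First we pass to the scalar submodule $\mathcal M_\eta=\overline{Q_\eta\mathcal M}\subset H_{\mathbf K}\otimes\mathbb C_\eta\cong H_{\mathbf K}$.
\begin{itemize}
\item $\mathcal M_\eta\neq 0$ because $\eta\in F$.
\item $\mathcal M_\eta$ has finite defect by Lemma~\ref{Delte Meta le inf}.
\item $1\perp\mathcal M_\eta$, since $\langle Q_\eta f,1\otimes\eta\rangle=\langle f,1\otimes\eta\rangle=0$ for every $f\in\mathcal M$.
\end{itemize}
So it suffices to show that no nonzero scalar submodule $\mathcal N\subset H_{\mathbf K}$ with $1\perp\mathcal N$ has finite defect.

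Write $S=\{n:b_n>0\}$, which is infinite by hypothesis. Let $g\in\ker\Delta_{\mathcal N}$. By the quadratic-form identity \eqref{3.3.star4}, we have $M_{z^\alpha}^*g\in\mathcal N$ whenever $|\alpha|\in S$. Now $\langle M_{z^\alpha}^*g,1\rangle=\langle g,z^\alpha\rangle$, and $1\perp\mathcal N$. Hence $\langle g,z^\alpha\rangle=0$ for all $|\alpha|\in S$. In other words, the orthogonal projection $\Pi_S$ of $H_{\mathbf K}$ onto $\bigoplus_{n\in S}\mathcal H_n$ vanishes on $\ker\Delta_{\mathcal N}$, where $\mathcal H_n$ denotes the homogeneous polynomials of degree $n$. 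Since $\ker\Delta_{\mathcal N}$ has finite codimension in $\mathcal N$, the map $\Pi_S|_{\mathcal N}$ has finite rank.

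To get the contradiction, pick $0\neq f\in\mathcal N$ and let $f_k\neq 0$ be its lowest-order homogeneous component. For each $j$ with $k+j\in S$ (there are infinitely many such $j$), the element $z_1^jf\in\mathcal N$ has lowest component $z_1^jf_k\neq 0$ in degree $k+j\in S$. These images under $\Pi_S$ therefore have nonzero components in pairwise distinct degrees, so they are linearly independent. Thus $\Pi_S|_{\mathcal N}$ has infinite rank, which is the desired contradiction.

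\emph{Part 2.} Let $0\neq p\in(H_{\mathbf K}\otimes F)\ominus\mathcal M$ be a polynomial, and write $p=\sum_\beta z^\beta\otimes c_\beta$ with $c_\beta\in F$. Choose $\alpha$ with $|\alpha|=\deg p$ and $c_\alpha\neq0$.
\begin{itemize}
\item The space $\mathcal M^\perp$ is invariant under $M_{z^\alpha}^*\otimes I$, so $(M_{z^\alpha}^*\otimes I)p\perp\mathcal M$.
\item Since $M_{z^\alpha}^*$ lowers degree by $|\alpha|=\deg p$, the element $(M_{z^\alpha}^*\otimes I)p$ is a constant. Monomials are orthogonal, so only the term $\beta=\alpha$ contributes, and this constant equals $\|z^\alpha\|^2\,(1\otimes c_\alpha)$.
\end{itemize}
Hence $1\otimes c_\alpha\perp\mathcal M$ with $0\neq c_\alpha\in F$, contradicting part~1.

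\emph{Main obstacle.} The key step is the observation in part~1 that vectors in $\ker\Delta$ have no components in degrees belonging to $S$. This is exactly where the infinitude of $S$ enters, since it supplies infinitely many independent directions $z_1^jf$ that must all be absorbed by a finite-codimensional defect.
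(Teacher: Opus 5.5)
Your proof is correct. Part~2 is the paper's computation in adjoint form: the paper pairs $z^{\alpha}P_{\mathcal M}(1\otimes c_\alpha)\in\mathcal M$ against $p$, while you apply $M_{z^\alpha}^*\otimes I$ to $p$ and use invariance of $\mathcal M^\perp$.

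For part~1 you take a genuinely different and much shorter route. The paper also passes to $\mathcal M_\eta$ and uses Lemma~\ref{Delte Meta le inf}, but then argues by minimal vanishing order:
\begin{itemize}
\item it chooses $f_0\in\mathcal M_\eta$ of minimal order $L_{\mathcal M_\eta}\ge 1$, and a coordinate $z_k$ with $M_{z_k}^*E_{L_{\mathcal M_\eta}}f_0\neq 0$;
\item it uses finite rank twice, first to build combinations $g_t$ of the $z_j^{n_i}f_0$ with $M_{z_k}^*g_t\in\mathcal M_\eta$, then to find $\sum_t\mu_tM_{z_k}^*g_t\in\ker\Delta_{\mathcal M_\eta}$;
\item it applies a backward shift $M^*_{z_j^{n}}$ with $b_n>0$, obtaining an element of $\mathcal M_\eta$ with a nonzero component in degree $L_{\mathcal M_\eta}-1$, which contradicts minimality.
\end{itemize}

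You work at degree $0$ instead. The quadratic-form identity together with $1\perp\mathcal N$ gives $\ker\Delta_{\mathcal N}\perp\bigoplus_{n\in S}\mathcal H_n$. Hence $\operatorname{rank}\Delta_{\mathcal N}\ge\operatorname{rank}(\Pi_S|_{\mathcal N})$, and the right side is infinite by triangularity of $\{z_1^jf\}$. This removes the bookkeeping with $L_{\mathcal M_\eta}$, $k$, $m$, $t_0$, $i_{t_0}$, and it yields an explicit lower bound for the rank. The paper's argument is closer in style to the backward-shift extraction it uses elsewhere (Step~1 of Theorem~\ref{finite codim}), but it buys nothing extra here.

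Your argument does not even need the scalar reduction. If $1\otimes\eta\perp\mathcal M$, the same computation gives $\langle g,z^\alpha\otimes\eta\rangle=0$ for all $g\in\ker\Delta_{\mathcal M}$ and $|\alpha|\in S$. Running the rank count with $\Pi_S\otimes P_{\mathbb C_\eta}$ on $z_1^jf$, where $Q_\eta f\neq 0$, then bypasses Lemma~\ref{Delte Meta le inf} entirely.

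One wording fix: the vectors $\Pi_S(z_1^jf)$ are linearly independent because their lowest nonzero degrees $k+j$ are pairwise distinct. Their nonzero components do not lie in disjoint sets of degrees.
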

\begin{proof}
First, for  $\forall \eta\in F\setminus\{0\}$, we will prove that $P_{\cal M} (1\otimes\eta)\not=0$ by using contradiction argument.  Assume that there is $\eta\in F\setminus\{0\}$ such that $P_{\cal M} (1\otimes\eta)=0$, i.e. $1\otimes \eta\in \mathcal{M}^\perp\cap (H_{\mathbf{K}}\otimes\mathbb C_\eta)$.  Since for $f\in H_{\mathbf{K}}\otimes \mathbb{C}_\eta$,
$$
\left\langle f, Q_\eta g\right\rangle=\left\langle f,g\right\rangle,~\forall g\in \mathcal{M},
$$  
we have 
\begin{eqnarray}\label{eq_Neta}
(H_{\mathbf{K}}\otimes \mathbb C_{\eta})\ominus \mathcal{M}_\eta=\mathcal{M}^\perp\cap \left(H_{\mathbf{K}}\otimes \mathbb{C}_\eta\right).
\end{eqnarray}
Hence $1\otimes\eta\perp \mathcal{M}_\eta$, which implies that $\forall h=h_0\otimes \eta\in\mathcal{M}_\eta$,
\begin{eqnarray}\label{4.1}
h(0)=E_0h=\langle h_0\otimes {\eta},1\otimes{\eta/\|\eta\|}\rangle 1\otimes\eta/\|\eta\|=0, 
\end{eqnarray}
where $E_n: H_{\mathbf{K}}\otimes \mathbb{C}^N\to H_{n}$ be the orthogonal projection with $H_n$ being defined as 
 $$H_{n}=\{f\in H_{\mathbf{K}}\otimes \mathbb{C}^N;  f\text{ is homogeneous and~} deg f=n\}.$$ Moreover, for any $g\in H_{\mathbf{K}}\otimes \mathbb{C}^N\setminus\{0\}$, set 
\begin{eqnarray}\label{eq_Lg}
L_{g}=\inf \{n\in \mathbb N;\, E_n g\not=0\}\,\text{and }L_{\mathcal{M}_\eta}=\inf\{L_g;\, g\in {\mathcal{M}_\eta}\setminus\{0\}\}, 
\end{eqnarray}
then $L_{\mathcal{M}_\eta}\geq 1$ and there is an $f_0\in\mathcal{M}_\eta$, s.t. $L_{f_0}=L_{\mathcal{M}_\eta}$. Choose $1\leq k\leq d$ such that 
\begin{eqnarray}\label{f_0}
(M_{z_k}^*\otimes I)E_{L_{\mathcal{M}_\eta}} f_0\not=0,
\end{eqnarray}
which implies that 
\begin{equation}\label{3.4.star6}
 L_{(M_{z_k}^*\otimes I) f_0}=L_{(M_{z_k}^*\otimes I)E_{L_{\mathcal{M}_\eta}} f_0}
\end{equation}
and
\begin{eqnarray}\label{4.11}
L_{(M_{z_k}^*\otimes I) f_0}=L_{\mathcal{M}_\eta}-1\geq 0.
\end{eqnarray}
Write ${\cal N}_\eta=(H_{\mathbf{K}}\otimes \mathbb{C}_\eta)\ominus\mathcal{M}_\eta$. Recall that 
$$
\Delta_{\mathcal{M}_\eta}^2=P_{\mathcal{M}_\eta}\left(E_0 +\sum\limits_{\alpha\in\mathbb N_+^d}b_{\alpha} [M_{z^\alpha}\otimes I, P_{{\cal N}_\eta}][ P_{{\cal N}_\eta}, M_{z^\alpha}^*\otimes I]\right)P_{\mathcal{M}_\eta}.
$$
By (\ref{4.1}) for any $h\in \mathcal{M}_\eta$, $$P_{\mathcal{M}_\eta}E_0 P_{\mathcal{M}_\eta}h=P_{\mathcal{M}_\eta}E_0 h=P_{\mathcal{M}_\eta} h(0)=0.$$
 Thus, $h\in \ker \Delta_{\mathcal{M}_\eta}=\ker\Delta_{\mathcal{M}_\eta}^2$ if and only if for any $\alpha\in \mathbb N_+^d$, 
$$
b_{\alpha} [ P_{{\cal N}_\eta}, M_{z^\alpha}^*\otimes I]h=0.
$$
By the assumption, $S=\{n: b_n\not=0\}$ is an infinite set, and we can write $S=\{n_i\}_{i=1}^\infty$ with $1\leq n_1<n_2<\cdots$. By Lemma \ref{Delte Meta le inf}, $\operatorname{rank} {\mathcal{M}_\eta}<\infty$, then for any fixed $1\leq j\leq d$, the subspace 
$$
\operatorname{span}\{ P_{{\cal N}_\eta} (M_{z_k}^*\otimes I) z_j^{n_i}f_0: \, i=1,2,\cdots\}
$$
is of finite dimension. Denote 
$$
m=\dim \operatorname{span}\{ P_{{\cal N}_\eta} (M_{z_k}^*\otimes I) z_j^{n_i} f_0: \, i=1,2\cdots\}+1.
$$
It follows that for any $t\in\mathbb N$, $\{ P_{{\cal N}_\eta} (M_{z_k}^*\otimes I) z_j^{n_i} f_0:\,i=1,\cdots, m\}$ is linearly dependent, that is, there are $\{\lambda_i^{(t)}\}_{i=1}^{m}$ which are not all zero, such that
$$
 P_{{\cal N}_\eta} (M_{z_k}^*\otimes I)\left(\sum\limits_{i=1}^m\lambda_i^{(t)}z_{j}^{n_{mt+i}}f_0\right)=\sum\limits_{i=1}^m \lambda_i^{(t)}\left( P_{{\cal N}_\eta}(M_{z_k}^*\otimes I) z_{j}^{n_{mt+i}}f_0\right)=0.
$$
To simplify the notation, set
$$
g_t=\sum\limits_{i=1}^m\lambda_i^{(t)}z_{j}^{n_{mt+i}}f_0,
$$
then both $g_t$ and $(M_{z_k}^*\otimes I) g_t$ are in $\mathcal{M}_\eta$. Since
$\{\Delta_{\mathcal{M}_\eta}M_{z_k}^{*}g_t\}_{t=1}^{\infty}$ is linearly dependent, there is a positive integer $l$ and complex numbers $\mu_{1},\cdots,\mu_{l}$, which are not all zero, such that
\begin{equation}\label{3.4.star3}
\sum_{t=1}^{l}\mu_{t}\Delta_{\mathcal{M}_\eta}(M_{z_k}^*\otimes I)g_t=0.
\end{equation}
By \eqref{3.3.star5} and \eqref{3.4.star3}, it is easy to see that
\begin{equation}\label{1.2}
b_{\alpha}P_{{\cal N}_\eta}(M_{z^{\alpha}}^{*}\otimes I)\sum_{t=1}^{l}\mu_{t}(M_{z_k}^*\otimes I)g_t=0,~\text{for all}~\alpha\neq 0.
\end{equation}
It follows from $b_{n_i}\not=0$ that 
$$
(M_{z_{j}^{n_i}}^*\otimes I)\sum_{t=1}^{l}\mu_{t}(M_{z_k}^*\otimes I)g_t\in\mathcal{M}_\eta.
$$
Set 
$$
t_0=\min\{t: \mu_t\not=0, 1\leq t\leq l\},\, 
\text{and } i_{t_0}=\min\{i: \lambda_{i}^{(t_0)}\not=0, 1\leq i\leq m\}.
$$
Then $(M_{z_j^{n_{mt_0+i_{t_0}}}}^*\otimes I)\sum\limits_{t=1}^{l}\mu_{t}(M_{z_k}^*\otimes I)g_t\in \mathcal{M}_\eta$. By the definition  (\ref{eq_Lg}) of $L_{\mathcal{M}_\eta}$,
\begin{equation}\label{E_r-1=0}
E_{L_{\mathcal{M}_\eta}-1}\left((M_{z_j^{n_{mt_0+i_{t_0}}}}^*\otimes I)\sum_{t=1}^{l}\mu_{t}(M_{z_k}^*\otimes I)g_t\right)=0.
\end{equation}
Since $E_{i}(M_{z_j}^*\otimes I)=(M_{z_j}^*\otimes I)E_{i+1}$, we have
\begin{equation}\label{Er-1=Erf}
\begin{aligned}
&E_{L_{\mathcal{M}_\eta}-1}\left((M_{z_j^{n_{mt_0+i_{t_0}}}}^*\otimes I)\sum_{t=1}^{l}\mu_{t}(M_{z_k}^*\otimes I)g_t\right)\\
=&(M_{z_j^{n_{mt_0+i_{t_0}}}}^*\otimes I)\sum_{t=1}^{n}\mu_{t}(M_{z_k}^*\otimes I)\sum_{i=1}^{m}\lambda_{i}^{(t)} (M_{z_j^{n_{mt+i}}}\otimes I)E_{r+n_{mt_0+i_{t_0}}-n_{mt+i}}f_0\\
=&(M_{z_j^{n_{mt_0+i_{t_0}}}}^*\otimes I)(M_{z_k}^*\otimes I)\sum_{t=1}^{l}\sum_{i=1}^{m}\mu_{t}\lambda_{i}^{(t)}(M_{z_j^{n_{mt+i}}}\otimes I)E_{L_{\mathcal{M}_\eta}+n_{mt_0+i_{t_0}}-n_{mt+i}}f_0\\
=&(M_{z_j^{n_{mt_0+i_{t_0}}}}^*\otimes I)(M_{z_k}^*\otimes I)\sum_{mt+i<mt_0+i_{t_0},t\leq l,i\leq m}\mu_{t}\lambda_{i}^{(t)}(M_{z_j^{n_{mt+i}}}\otimes I)E_{L_{\mathcal{M}_\eta}+n_{mt_0+i_{t_0}}-n_{mt+i}}f_0\\
&+\mu_{t_0}\lambda_{i_{t_0}}^{(t_0)}(M_{z_j^{n_{mt_0+i_{t_0}}}}^*\otimes I)(M_{z_k}^*\otimes I)(M_{z_j^{n_{mt_0+i_{t_0}}}}\otimes I)E_{L_{\cal M}}f_0\\
&+(M_{z_j^{n_{mt_0+i_{t_0}}}}^*\otimes I)(M_{z_k}^*\otimes I)\sum_{mt+i>mt_0+i_{t_0},t\leq l,i\leq m}\mu_{t}\lambda_{i}^{(t)}(M_{z_j^{n_{mt+i}}}\otimes I)E_{L_{\mathcal{M}_\eta}+n_{mt_0+i_{t_0}}-n_{mt+i}}f_0\\
\overset{(*)}=&0+\mu_{t_0}\lambda_{i_{t_0}}^{(t_0)}(M_{z_j^{n_{mt_0+i_{t_0}}}}^*\otimes I)(M_{z_k}^*\otimes I)(M_{z_j^{n_{mt_0+i_{t_0}}}}\otimes I)E_{L_{\mathcal{M}_\eta}}f_0+0\\
=&\mu_{t_0}\lambda_{i_{t_0}}^{(t_0)}(M_{z_j^{n_{mt_0+i_{t_0}}}}^*\otimes I)(M_{z_k}^*\otimes I)(M_{z_j^{n_{mt_0+i_{t_0}}}}\otimes I)E_{L_{\mathcal{M}_\eta}}f_0,
\end{aligned}
\end{equation}
where $(*)$ follows from the facts that 
$$
\mu_{t}\lambda_{i}^{(t)}=0, ~mt+i<mt_0+i_{t_0},
$$
and 
$$
E_{L_{\mathcal{M}_\eta}+n_{mt_0+i_{t_0}}-n_{mt+i}}f_0=0, ~mt+i>mt_0+i_{t_0}.
$$
By (\ref{E_r-1=0}) and (\ref{Er-1=Erf}),
\begin{equation}\label{1.3}
(M_{z_j^{n_{mt_0+i_{t_0}}}}^*\otimes I)(M_{z_k}^*\otimes I)(M_{z_j^{n_{mt_0+i_{t_0}}}}\otimes I)E_{L_{\mathcal{M}_\eta}}f_0=0.
\end{equation}
Write
\begin{equation}\label{Erf}
E_{L_{\mathcal{M}_\eta}}f_0=f_{1}+f_{2},
\end{equation}
where $f_{1}\in \operatorname{ker}(M_{z_k}^*\otimes I)\cap (H_{L_{\mathcal{M}_\eta}}\otimes\mathbb{C}_\eta)$, $f_{2}\in\overline{\operatorname{ran}(M_{z_k}\otimes I)}\cap (H_{L_{\mathcal{M}_\eta}}\otimes\mathbb{C}_\eta)$. Notice that $$(M_{z_k}^*\otimes I)(M_{z_j^{n_{mt_0+i_{t_0}}}}^*\otimes I)(M_{z_j^{n_{mt_0+i_{t_0}}}}\otimes I)f_{1}=0.$$ 
Then by (\ref{1.3}) and (\ref{Erf}),
\begin{equation}\label{f2}
(M_{z_k}^*\otimes I)(M_{z_j^{n_{mt_0+i_{t_0}}}}^*\otimes I)(M_{z_j^{n_{mt_0+i_{t_0}}}}\otimes I)f_2=0.
\end{equation}
Since $f_2\in\overline{\operatorname{ran}(M_{z_k}\otimes I)}\cap (H_{L_{\mathcal{M}_\eta}}\otimes\mathbb{C}_\eta)$, by (\ref{f2}), we have
$$
\left\langle (M_{z_j^{n_{m{t_0}+i_{t_0}}}}^*\otimes I)(M_{z_j^{n_{mt_0+i_{t_0}}}}\otimes I)f_2, f_2\right\rangle=0,
$$
which implies that $f_2=0$. Therefore $E_{L_{\mathcal{M}_\eta}}f_0=f_1\in \ker (M_{z_k}^*\otimes I)$,
which contradicts \eqref{f_0}.

Now, we will prove $(H_{\mathbf{K}}\otimes F)\ominus\mathcal{M}$ has no nonzero polynomial. By using contradiction argument again, suppose that there exists a nonzero polynomial $p\in (H_{\mathbf{K}}\otimes F)\ominus\mathcal{M}$. Write
  $$
  p=\sum_{\alpha\in \mathbb{N}^d}z^{\alpha}\otimes c_{\alpha},~c_{\alpha}\in F.
  $$
Choose an $\alpha_0\in \mathbb{N}^d$ such that $|\alpha_0|=deg(p)$ and $c_{\alpha_0}\neq 0$. Obviously, $z^{\alpha_0}P_{\mathcal{M}}(1\otimes c_{\alpha_0})\in\cal M$. Then
$$
\begin{aligned}
0=&\left\langle z^{\alpha_0}P_{\mathcal{M}}(1\otimes c_{\alpha_0}) , p \right\rangle\\
=&\left\langle P_{\mathcal{M}}(1\otimes c_{\alpha_0}), (M_{z^{\alpha_0}}^{*}\otimes I)p \right\rangle\\
=&\left\langle P_{\mathcal{M}}(1\otimes c_{\alpha_0}), (M_{z^{\alpha_0}}^{*}\otimes I) (z^{\alpha_0}\otimes c_{\alpha_0})\right\rangle =\left\langle P_{\mathcal{M}}(1\otimes c_{\alpha_0}), \|z^{\alpha_0}\|^2\otimes c_{\alpha_0}  \right\rangle\\
=&\frac{ {\alpha_0}!}{a_{|{\alpha_0}|}|{\alpha_0}|!}  \left\|P_{\mathcal{M}}(1\otimes c_{\alpha_0}) \right\|^{2}>0,
\end{aligned}
$$
which is a contradiction.
\end{proof}

\begin{thm}\label{M=Hk}
Suppose that $\{n: b_n>0\}$ is an infinite set. Let $\cal M$ be a nonzero submodule in $H_{\mathbf{K}}\otimes \mathbb{C}^N$. Then $\mathcal{M}$ is of finite defect if and only if there is a subspace $F\subset \mathbb{C}^N$ such that ${\cal M}=H_{\mathbf{K}}\otimes F$.
 \end{thm}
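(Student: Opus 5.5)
The condition ``$\mathcal M=H_{\mathbf K}\otimes F$ for some subspace $F$'' is sufficient. Such an $\mathcal M$ reduces every $M_{z_i}\otimes I$, so every commutator in \eqref{defect operator} vanishes. Then $\Delta_{\mathcal M}^2=P_{\mathcal M}E_0P_{\mathcal M}$, whose rank is at most $\dim F<\infty$.

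For necessity, assume $\operatorname{rank}\Delta_{\mathcal M}<\infty$ and let $F=\mathbb C^N\ominus\{\zeta:\mathcal M_\zeta=0\}$ as in Lemma \ref{no polynomial}. The plan has four steps.

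\emph{Step 1.} Show $\mathcal M\subset H_{\mathbf K}\otimes F$. If $\mathcal M_\zeta=0$ then $Q_\zeta\mathcal M=0$, so every $f\in\mathcal M$ has values orthogonal to $\zeta$. Put $\mathcal N=(H_{\mathbf K}\otimes F)\ominus\mathcal M$. It remains to prove $\mathcal N=0$.

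\emph{Step 2.} Record the structure of $\mathcal N$. It is closed, invariant under every $M_{z_i}^*\otimes I$, and by Lemma \ref{no polynomial}(2) it contains no nonzero polynomial.

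\emph{Step 3.} Show that $\mathcal M$ is ``almost reducing'', i.e.\ $\dim\mathcal N<\infty$; this is the step I expect to be the main obstacle. Finite defect says $\ker\Delta_{\mathcal M}$ has finite codimension in $\mathcal M$. By \eqref{3.3.star4}, every $f\in\ker\Delta_{\mathcal M}$ satisfies $E_0f=0$ and $(M_{z^\alpha}^*\otimes I)f\in\mathcal M$ for every $\alpha$ with $b_\alpha\neq0$. The difficulty is that only the powers $n$ with $b_n>0$ are controlled. I would first try the mechanism of the proof of Lemma \ref{no polynomial}(1): because $\{n:b_n>0\}$ is infinite, one can form linear combinations of $z_j^{n_i}g$ that kill the finite-dimensional obstruction. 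This should show that $P_{\mathcal N}(M_{z_k}^*\otimes I)$ has finite rank on $\mathcal M$ for every $k$. Equivalently, $P_{\mathcal N}(M_{z_k}\otimes I)^*P_{\mathcal M}$ and hence $[P_{\mathcal N},M_{z_k}\otimes I]$ are finite rank. Combined with purity and the absence of polynomials in $\mathcal N$, this should force $\dim\mathcal N<\infty$. If this fails, the fallback is to reduce to the scalar modules $\mathcal M_\eta$, which have finite defect by Lemma \ref{Delte Meta le inf}. There I would run the minimal-degree argument of Lemma \ref{no polynomial}(1) on $\mathcal N_\eta$ rather than on $\mathcal M_\eta$.

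\emph{Step 4.} Conclude from finite dimensionality. Once $\dim\mathcal N<\infty$, the commuting tuple $(M_{z_i}^*\otimes I)|_{\mathcal N}$ has a joint eigenvector if $\mathcal N\ne0$. Any such eigenvector has the form $\mathbf K_\lambda\otimes\eta$ with $\lambda\in\mathbb B_d$ and $\eta\in F\setminus\{0\}$, and $\mathbf K_\lambda\otimes\eta\perp\mathcal M$ means every $f\in\mathcal M$ satisfies $\langle f(\lambda),\eta\rangle=0$. If $\lambda=0$ this says $1\otimes\eta\in\mathcal N$, contradicting Lemma \ref{no polynomial}(1) (equivalently (2)). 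If $\lambda\neq0$, the same conclusion should follow by composing with a ball automorphism, or by noting that a finite-dimensional backward-shift-invariant $\mathcal N$ has a cyclic structure producing a polynomial via $\prod_i(M_{z_i}^*-\bar\lambda_i)$-type combinations. Either way $\mathcal N$ would contain a nonzero polynomial, which Lemma \ref{no polynomial}(2) excludes. Hence $\mathcal N=0$ and $\mathcal M=H_{\mathbf K}\otimes F$.
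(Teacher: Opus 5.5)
Steps 1--2 and sufficiency are fine. The proposal has two genuine gaps: Step~4 is wrong when $\lambda\neq0$, and Step~3, which carries the argument, is only asserted.

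\textbf{Step~4 fails for $\lambda\neq 0$.} This is exactly where the infiniteness of $\{n:b_n>0\}$ has to be used again. A finite-dimensional subspace invariant under all $M_{z_i}^*\otimes I$ need not contain any nonzero polynomial. Take $N=1$ and $\mathcal M=\{f\in H_{\mathbf K}: f(\lambda)=0\}$ with $\lambda\neq 0$. Then $F=\mathbb C$ and $\mathcal N=\mathbb C\mathbf K_\lambda$ is one-dimensional. It contains no nonzero polynomial, because all $a_n>0$, so $\mathbf K_\lambda$ has infinitely many nonzero homogeneous components. Also $P_{\mathcal M}1\neq 0$. So every property you invoke in Step~4 holds while $\mathcal N\neq 0$. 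The ``cyclic structure producing a polynomial'' does not exist. Composing with a ball automorphism does not help either: on a general $H_{\mathbf K}$ such compositions are not unitary module maps, and there is no reason the transformed submodule keeps finite defect.

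Given $\dim\mathcal N<\infty$, the correct finish uses the defect directly. For $g\in\mathcal N$ and $f\in\ker\Delta_{\mathcal M}$, \eqref{3.3.star4} gives $\langle f,z^\alpha g\rangle=\langle (M_{z^\alpha}\otimes I)^*f,g\rangle=0$ whenever $b_{|\alpha|}>0$. Hence the linearly independent vectors $z_1^{n_i}g$ all lie in $(H_{\mathbf K}\otimes F)\ominus\ker\Delta_{\mathcal M}=\mathcal N\oplus(\mathcal M\ominus\ker\Delta_{\mathcal M})$. That space is finite-dimensional, so $g=0$.

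\textbf{Step~3 is the more serious gap.} The finite rank of $P_{\mathcal M^\perp}(M_{z_k}\otimes I)^*P_{\mathcal M}$ is immediate from $b_1=a_1>0$ and \eqref{3.3.star4}; it uses nothing about the infinite set. So this reduction discards precisely the hypothesis that distinguishes the theorem. Going from finite-rank commutators to $\dim\mathcal N<\infty$ is a Guo-type finite-codimension result, of the kind proved in Theorem~\ref{finite codim} (there only for finitely many $b_n>0$, and via \cite{Gu}). You give no mechanism by which purity and the absence of polynomials would produce it.

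\textbf{How the paper avoids both steps.} It never needs $\mathcal N$ to be finite-dimensional.
\begin{enumerate}
\item Fix $\eta\in F\setminus\{0\}$. Finite rank of $\Delta_{\mathcal M}$ yields a nontrivial $q=\sum_{i=1}^{m_0}\lambda_i z_1^{n_i}\otimes\eta$ with $\Delta_{\mathcal M}^2P_{\mathcal M}q=0$. Hence $P_{\mathcal M^\perp}(M_{z^\alpha}\otimes I)^*P_{\mathcal M}q=0$ whenever $b_{|\alpha|}>0$.
\item Take $|\alpha|=n_{m_0+1}>\deg q$. Since $(M_{z^\alpha}\otimes I)^*q=0$ and $\mathcal M^\perp$ is invariant under the adjoints, this gives $P_{\mathcal M}q\perp z^\alpha z^\beta\otimes\zeta$ for all $\beta,\zeta$. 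So $P_{\mathcal M}q$, and therefore $P_{\mathcal M^\perp}q$, is a polynomial.
\item Lemma~\ref{no polynomial}(2) forces $P_{\mathcal M^\perp}q=0$.
\item Then $(M_{z_1^{n_{m_0}}}\otimes I)^*q$, a nonzero multiple of $1\otimes\eta$, lies in $\mathcal M$.
\end{enumerate}
This gives $H_{\mathbf K}\otimes F\subset\mathcal M$ directly.
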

\begin{proof}
The sufficiency is obvious, and we now prove the necessity. Write 
$$
\{n:\, b_n>0\}=\{n_i\}_{i=1}^\infty, \qquad 1\le n_1<n_2<\cdots,
$$ 
and $F=\mathbb{C}^N\ominus\{\zeta\in \mathbb{C}^N|\mathcal{M}_\zeta= 0\}$, then $\mathcal{M}\subset H_{\mathbf{K}}\otimes F$. Since $\operatorname{rank}\cal M<\infty$, for any fixed $\eta\in F\setminus\{0\}$, the space 
  $$
  \operatorname{span}\{\Delta_{\mathcal{M}}^{2}P_{\mathcal{M}}(z_1^{n_i}\otimes\eta)|i=1,2,\cdots\}
  $$ 
  is of finite dimension. Let
$$
m=\dim\operatorname{span}\{\Delta_{\mathcal{M}}^{2}P_{\mathcal{M}}(z_1^{n_i}\otimes\eta)|i=1,2,\cdots\}+1.
$$
Hence there exist $\lambda_1,\cdots,\lambda_m \in\mathbb C$, which are not all zero such that
\begin{equation}\label{3.5.star}
\Delta_{\mathcal{M}}^2 P_{\mathcal{M}}\sum_{i=1}^{m}\lambda_{i}z_1^{n_i}\otimes\eta=\sum_{i=1}^{m}\lambda_{i}\Delta_{\mathcal{M}}^{2}P_{\mathcal{M}}(z_1^{n_i}\otimes\eta)=0.
\end{equation}
Let $m_0=\sup\{i:\lambda_i\neq 0\}$ and 
$$
q(z_1)=\sum_{i=1}^{m_0}\lambda_{i}z_1^{n_i}\otimes\eta.
$$
By \eqref{3.3.star4} and \eqref{3.5.star}, for $\forall \alpha\not=0$, 
\begin{equation}\label{1.4}
b_{|\alpha|}P_{\mathcal{M}^\perp}(M_{z^{\alpha}}\otimes I)^*P_{\mathcal{M}} q=0,
\end{equation}
that is, for any $\beta\in\mathbb N^d$ and $\zeta\in \mathbb{C}^N$,
$$
\left\langle P_{\mathcal{M}^\perp}b_{|\alpha|}(M_{z^{\alpha}}\otimes I)^*P_{\mathcal{M}}q, z^{\beta}\otimes\zeta\right\rangle=0,~\forall \alpha\neq 0.
$$
We claim that $P_{\mathcal{M}} q$ is a polynomial. Notice that
\begin{equation}\label{z beta-PM z beta}
\begin{aligned}
0&=\left\langle P_{\mathcal{M}^\perp}b_{|\alpha|}(M_{z^{\alpha}}\otimes I)^*P_{\mathcal{M}} q, z^{\beta}\otimes\zeta\right\rangle\\
&=b_{|\alpha|}\left(\left\langle (M_{z^{\alpha}}\otimes I)^*P_{\mathcal{M}} q, z^{\beta}\otimes\zeta\right\rangle-\left\langle (M_{z^{\alpha}}\otimes I)^*P_{\mathcal{M}} q, P_{\mathcal{M}} (z^{\beta}\otimes\zeta)\right\rangle\right),
\end{aligned}
\end{equation}
and for $|\alpha|>n_{m_0}$, 
\begin{equation}\label{alpha > nm0}
\begin{aligned}
\left\langle (M_{z^{\alpha}}\otimes I)^*P_{\mathcal{M}} q, P_{\mathcal{M}} (z^{\beta}\otimes\zeta)\right\rangle
&=\left\langle (M_{z^{\alpha}}\otimes I)^*\left(I-P_{\mathcal{M}^\perp}\right)q, P_{\mathcal{M}} (z^{\beta}\otimes\zeta)\right\rangle\\
&=\left\langle (M_{z^{\alpha}}\otimes I)^* q, P_{\mathcal{M}} (z^{\beta}\otimes\zeta)\right\rangle
=0.
\end{aligned}
\end{equation}
Hence by (\ref{z beta-PM z beta}) and (\ref{alpha > nm0}), for any $\alpha, \beta\in\mathbb N^d $ with $|\alpha|> n_{m_0}$ and any $\zeta\in\mathbb{C}^N$,
\begin{equation*}
b_{|\alpha|}\left\langle P_{\mathcal{M}} q, z^{\alpha}z^{\beta}\otimes\zeta\right\rangle=0.
\end{equation*}
In particular, since $b_{n_{m_0+1}}\not=0$, for any $\alpha, \beta\in\mathbb N^d$ with $|\alpha|=n_{m_0+1}$ and $\zeta\in\mathbb{C}^N$,
\begin{eqnarray}\label{z alpha z beta = 0}
\left\langle P_{\mathcal{M}} q, z^{\alpha}z^{\beta}\otimes\zeta\right\rangle=0.
\end{eqnarray}
Please note that for any $\gamma\in\mathbb N^d$ with $|\gamma|>n_{{m_0}+1}$, there exist $\gamma_1,\gamma_2\in\mathbb N^d$ with $|\gamma_1|=n_{{m_0}+1}$ such that $z^{\gamma}=z^{\gamma_1}z^{\gamma_2}$. It follows from (\ref{z alpha z beta = 0}) that, for $\forall \gamma\in\mathbb N^d$, $\zeta\in\mathbb{C}^N$ and $|\gamma|>n_{m_0+1}$,
$$
\left\langle P_{\mathcal{M}}q, z^{\gamma}\otimes\zeta\right\rangle=0,
$$
which implies that $P_{\mathcal{M}} q$ is a polynomial and the claim is proved. Notice that $P_{\mathcal{M}^\perp}q=q-P_{\mathcal{M}}  q$, which is also a polynomial. 
By Lemma $\ref{no polynomial}$, $P_{\mathcal{M}^\perp} q=0$, and hence 
\begin{equation}\label{1.a}
q=P_{\mathcal{M}}q.
\end{equation}
Then by $(\ref{1.4})$ and $(\ref{1.a})$, we have
$$
\begin{aligned}
P_{\mathcal{M}^\perp}(1\otimes\eta)&= P_{\mathcal{M}^\perp} a_{n_{m_0}} (M_{z_{1}^{n_{m_0}}}^*\otimes I)(z_1^{n_{m_0}}\otimes\eta)\\
&=\frac{a_{n_{m_0}}}{{\lambda_{m_0}}} P_{\mathcal{M}^\perp}(M_{z_{1}^{n_{m_0}}}^*\otimes I)q\\
&=\frac{a_{n_{m_0}}}{{\lambda_{m_0}}}  P_{\mathcal{M}^\perp}(M_{z_{1}^{n_{m_0}}}^*\otimes I)P_{\mathcal{M}}q=0,
\end{aligned}
$$
i.e. $1\otimes\eta \in\mathcal{M}$. By the arbitrariness of $\eta$, $\mathcal{M}\supset H_{\mathbf{K}}\otimes F$, hence $\mathcal{M}= H_{\mathbf{K}}\otimes F$.

\end{proof}
\section{Rigidity problem for submodules of finite codimension}
The problem of rigidity for unitary equivalence of submodules originated from the classical Beurling theorem, which implies that all the submodules in $H^2(\mathbb D)$ are unitarily equivalent. For the Hardy module over the polydisc, Agrawal, Clark and Douglas \cite{ACD} initiated the systematic study; Douglas and Yan \cite{DY}, Douglas, Paulsen, Sah and Yan \cite{DPSY} developed an algebraic reduction approach, and  Guo's Characteristic Space Theory \cite{Guo1, Guo2} provided a local analytic-algebraic framework. However, for Bergman modules and Dirichlet modules over the unit balls, Richter \cite{Ric} and Putinar \cite{Pu} proved that ${\cal M}_1$ is unitarily equivalent to ${\cal M}_2$ if and only if ${\cal M}_1={\cal M}_2$. For the general case of reproducing kernel Hilbert modules, this problem is still open. 

Let $\Omega$ be a bounded domain in $\mathbb C^d$ ($d\geq 2)$, and $H_{\mathfrak{K}}$ be a reproducing kernel Hilbert space of holomorphic functions in $\Omega$, with the kernel $\mathfrak{K}:\Omega\times\Omega\to\mathbb C$. Assume that 
\begin{itemize}
  \item [(1)] $\mathcal{P}_d$ is dense in $H_{\mathfrak{K}}$.
  \item [(2)] the coordinate multipliers
$
        M_{z_1},\ldots,M_{z_d}
$
are bounded on \(H_{\mathfrak{K}}\).
  \item [(3)] $
         \mathfrak{K}(\lambda,\lambda)\longrightarrow\infty,
        \text{ as } \lambda\to\partial\Omega .
$
\end{itemize}
In this section, we prove Theorem \ref{thm:rigidity}. For reader's convenience, we rewrite the theorem as follows.
\begin{thm}\label{thm:vector-valued-rigidity}
Let \(\mathcal{M}_1,\mathcal{M}_2\subset H_{\mathfrak{K}}\otimes \mathbb C^N\) be finite-codimensional submodules. Suppose there exists
a unitary module isomorphism
$$
U:\mathcal{M}_1\to \mathcal{M}_2
$$
such that
$$
U\bigl(M_{z_j}\otimes I|_{\mathcal{M}_1}\bigr)
=
\bigl(M_{z_j}\otimes I|_{\mathcal{M}_2}\bigr)U,
\quad j=1,\dots,d.
$$
Then there exists a constant unitary matrix $W\in M_N(\mathbb{C})$ such that ${\cal M}_2=(I_{H_{\mathfrak{K}}}\otimes W)\mathcal{M}_1$. 
\end{thm}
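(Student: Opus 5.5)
Since $\mathcal M_1$ and $\mathcal M_2$ have finite codimension, they contain ``polynomially dense'' pieces. The plan is to show that $U$ is a multiplication by a bounded holomorphic matrix function, and then to use the kernel blow-up at $\partial\Omega$ together with finite codimension to force that function to be a constant unitary.

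\emph{Step 1 (finite codimension gives polynomial ideals).} Let $\mathcal M$ be a finite-codimensional submodule of $H_{\mathfrak K}\otimes\mathbb C^N$. For each $j$ and each coordinate vector $e_k$, the vectors $z^\alpha\otimes e_k$ cannot all be independent modulo $\mathcal M$. Hence some nonzero polynomial $p\otimes e_k$ lies in $\mathcal M$. The set of $f\in\mathcal P_d\otimes\mathbb C^N$ lying in $\mathcal M$ is then a submodule $M^{\rm alg}$ of $\mathcal P_d^N$, and $\mathcal P_d^N/M^{\rm alg}$ is finite dimensional because it embeds into $(H_{\mathfrak K}\otimes\mathbb C^N)/\mathcal M$ by density of $\mathcal P_d$. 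Consequently $M^{\rm alg}\supset I\cdot\mathcal P_d^N$ for some ideal $I$ of finite codimension, with zero set $Z(I)$ a finite subset of $\Omega$; this uses property (3), since $\mathcal M$ is closed and its zeros lie inside $\Omega$. Moreover $\mathcal M$ is the closure of $M^{\rm alg}$.

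\emph{Step 2 ($U$ is a multiplier).} Fix $q\in I_1\cap I_2$, nonzero, and let $\varepsilon_k=q\otimes e_k\in\mathcal M_1$. Put $U\varepsilon_k=G_k\in\mathcal M_2$ and let $G=[G_1,\dots,G_N]$ be the $N\times N$ holomorphic matrix with these columns. By intertwining, $U(pq\otimes e_k)=pG_k$ for every polynomial $p$. Thus on the dense subspace $q\mathcal P_d^N$ of $\mathcal M_1$ we have $U(qh)=Gh$. By symmetry, $U^{-1}(qh)=\tilde G h$. Composing these, $q\,U(qh)=U(q^2h)$, which gives $q\,Gh=G\,qh$ trivially; the useful consequence is $G\tilde G=q^2 I$ off $Z(q)$, so $G/q$ is an invertible meromorphic matrix $\Phi$ with $Uf=\Phi f$ on $\mathcal M_1$. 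Isometry on $\mathcal M_1$ gives the following. For $\lambda\notin Z(I_1)\cup Z(I_2)$, evaluation on $\mathcal M_i$ is surjective onto $\mathbb C^N$. The reproducing kernels of $\mathcal M_i$ are $K^{(i)}(z,\lambda)=P_{\mathcal M_i}(\mathfrak K_\lambda\otimes I)$, and unitarity of $U$ yields
\[
K^{(2)}(z,w)=\Phi(z)K^{(1)}(z,w)\Phi(w)^*.
\]

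\emph{Step 3 (constancy; main obstacle).} Since $\mathcal M_i^\perp$ is finite dimensional, $K^{(i)}(z,w)=\mathfrak K(z,w)I_N-R_i(z,w)$, where $R_i$ is a finite sum of products of holomorphic and antiholomorphic functions that are bounded near $\partial\Omega$ (they are built from an orthonormal basis of $\mathcal M_i^\perp$). The key step is to show that $\Phi$ is a constant unitary. I would first restrict to the diagonal and divide by $\mathfrak K(\lambda,\lambda)$. By (3), for $\lambda$ approaching the boundary, $\Phi(\lambda)\Phi(\lambda)^*\to I$ boundedly. This forces $\Phi$ to be bounded near $\partial\Omega$, hence holomorphic on all of $\Omega$ (its poles lie in the finite set $Z(q)$, and Hartogs applies since $d\ge2$). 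The crux is then to upgrade to the off-diagonal identity
\[
\mathfrak K(z,w)\bigl(I-\Phi(z)\Phi(w)^*\bigr)=R_2(z,w)-\Phi(z)R_1(z,w)\Phi(w)^*,
\]
whose right side has finite rank as a kernel. A kernel of the form $\mathfrak K\cdot A(z,w)$, with $A$ sesqui-holomorphic, can have finite rank only if $A\equiv0$. I would argue this by viewing $f\mapsto$ multiplication as a module map: the left side defines the finite-rank operator $P_{\mathcal M_2}-UP_{\mathcal M_1}U^*$-type difference, which commutes with the $M_{z_j}$ up to finite rank. The difficulty is making this rigorous with commutative algebra. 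My approach would be to apply the $\mathcal P_d$-module structure to the finite-dimensional spaces $\mathcal M_i^\perp$ (annihilated by $I_i$). This shows that the right side is annihilated, in $z$, by $I_1I_2$. Then for $a\in I_1I_2$ we get $a(z)\mathfrak K(z,w)(I-\Phi(z)\Phi(w)^*)=0$, so $\Phi(z)\Phi(w)^*=I$ identically. Hence $\Phi\equiv W$ is a constant unitary, and $\mathcal M_2=U\mathcal M_1=(I\otimes W)\mathcal M_1$. For $N=1$, $W$ is a unimodular scalar, and so $\mathcal M_2=\mathcal M_1$.
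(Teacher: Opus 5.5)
\textbf{The gap is in your Step 3, the step you call the crux.} Your Steps 1--2 and the diagonal part of Step 3 follow the paper's structure: finite-codimensional polynomial ideals, $U$ acting as multiplication by an invertible holomorphic $\Phi$, Hartogs, and a boundary estimate from $\mathfrak K(\lambda,\lambda)\to\infty$ together with $\dim\mathcal M_i^\perp<\infty$. The off-diagonal upgrade, however, rests on two claims that are false.

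First, it is not true that a kernel $\mathfrak K(z,w)A(z,w)$ with $A$ sesqui-holomorphic has finite rank only if $A\equiv0$. For the Drury--Arveson kernel on $\mathbb B_d$, which satisfies (1)--(3), the choice $A(z,w)=1-\langle z,w\rangle$ gives $\mathfrak K A\equiv 1$.

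Second, the annihilation step confuses two module actions. For $a\in I_i$ you have $a(H_{\mathfrak K}\otimes\mathbb C^N)\subset\mathcal M_i$, i.e. $M_a^*\mathcal M_i^\perp=0$: the compressed action on $\mathcal M_i^\perp$ vanishes. This does not make $a(z)u(z)\equiv0$ for $u\in\mathcal M_i^\perp$. Take $\mathcal M=\{f\in H^2_d: f(0)=0\}$. Then $\mathcal M^\perp=\mathbb C1$ and $R(z,w)\equiv 1$, while $a(z)R(z,w)=z_1\not\equiv0$ for $a=z_1\in I$. Expanding $a(z)$ times your right-hand side via $U$ and the intertwining relations just returns $a(z)$ times the identity itself. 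So nothing forces $a(z)\mathfrak K(z,w)(I-\Phi(z)\Phi(w)^*)=0$, and $\Phi(z)\Phi(w)^*=I$ is not established.

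\textbf{The missing idea is one you are one step away from.} Once $\Phi$ and $\Phi^{-1}$ are holomorphic on all of $\Omega$ and $\Phi(\lambda)\Phi(\lambda)^*\to I$ as $\lambda\to\partial\Omega$, use the maximum principle. The functions $\lambda\mapsto\|\Phi(\lambda)v\|^2$ and $\lambda\mapsto\|\Phi(\lambda)^{-1}v\|^2$ (with $\|v\|=1$) are plurisubharmonic, so $\|\Phi\|\le1$ and $\|\Phi^{-1}\|\le 1$ on $\Omega$. Hence $\Phi$ is unitary-valued, and $\Phi^*\partial_j\Phi=0$ forces $\Phi\equiv W$. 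This is exactly how the paper concludes.

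Your diagonal step also needs two smaller repairs.
\begin{itemize}
\item Elements of $\mathcal M_i^\perp$ need not be bounded near $\partial\Omega$. What is true, and enough, is $\|u(\lambda)\|^2=o(\mathfrak K(\lambda,\lambda))$. This follows from $\mathfrak K_\lambda/\|\mathfrak K_\lambda\|\rightharpoonup 0$, using density of polynomials, boundedness of $\Omega$ and (3).
\item For $d\ge2$, $Z(q)$ is a hypersurface, not a finite set, and it may reach $\partial\Omega$, so Hartogs does not apply to it. You must first observe that $\Phi$ does not depend on $q$, because evaluation on $\mathcal M_i$ is onto $\mathbb C^N$ off $Z(I_1)\cup Z(I_2)$. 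Then the singularities of $\Phi$ lie in the finite set $Z(I_1)\cup Z(I_2)$; the paper gets this through its local formula $\Phi=H_VF_V^{-1}$. Boundedness near the boundary plays no role in removing these interior singularities.
\end{itemize}
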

To prove this theorem, we need some preparations. For a submodule $\mathcal{M}\subset H_{\mathfrak{K}}\otimes\mathbb{C}^N$, set
\begin{eqnarray}\label{def:zero_points}
\mathcal{Z}(\mathcal{M})=\{\lambda\in\Omega:\dim\mathcal{M}_\lambda<N\},
\end{eqnarray}
where $\mathcal{M}_\lambda=\{f(\lambda):f\in\mathcal{M}\}$.
Denote by
$$
\mathcal{Q}=\mathcal{P}_d\otimes\mathbb{C}^N/(\mathcal{M}\cap(\mathcal{P}_d\otimes\mathbb{C}^N)), \,
\operatorname{Ann}_{\mathcal{P}_d}(\mathcal{Q})=\{p\in\mathcal{P}_d:p\cdot\mathcal{P}_d\otimes\mathbb{C}^N\subseteq\mathcal{M}\cap(\mathcal{P}_d\otimes\mathbb{C}^N)\},
$$
and
$$
A(\mathcal{M})=\{\lambda\in\Omega:\exists\,p\in\mathcal{P}_d~\text{such that}~ p(\lambda)\neq0,\ p\otimes e_i\in\mathcal{M}\text{ for }1\leq i\leq N\}.
$$

\begin{lem}\label{prop:scheme_vector_free_off_support}
For a polynomially generated submodule $\mathcal{M}\subset H_{\mathfrak{K}}\otimes\mathbb{C}^N$, we have
$$
\mathcal{Z}(\mathcal{M})=\Omega\setminus A(\mathcal{M})=Z(\operatorname{Ann}_{\mathcal{P}_d}(\mathcal{Q}))\cap\Omega,
$$
where $Z(\operatorname{Ann}_{\mathcal{P}_d}(\mathcal{Q}))=\{\lambda\in\mathbb{C}^d|~p(\lambda)=0,~\forall p\in \operatorname{Ann}_{\mathcal{P}_d}(\mathcal{Q})\}$.
\end{lem}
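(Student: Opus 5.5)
We prove the two equalities separately. Write $\mathcal{M}_0=\mathcal{M}\cap(\mathcal{P}_d\otimes\mathbb{C}^N)$; by assumption $\mathcal{M}$ is the closure of $\mathcal{M}_0$ (the polynomials in $\mathcal{M}$ generate it). Note that $p\in\operatorname{Ann}_{\mathcal{P}_d}(\mathcal{Q})$ exactly when $p\otimes e_i\in\mathcal{M}$ for $1\le i\le N$, since $p\otimes e_i\in\mathcal{M}_0$ for all $i$ forces $p\cdot(\mathcal{P}_d\otimes\mathbb{C}^N)\subseteq\mathcal{M}_0$. Hence $A(\mathcal{M})=\{\lambda\in\Omega:\exists\,p\in\operatorname{Ann}(\mathcal{Q}),\ p(\lambda)\ne0\}=\Omega\setminus Z(\operatorname{Ann}(\mathcal{Q}))$. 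This gives the second equality immediately.

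For the inclusion $\mathcal{Z}(\mathcal{M})\subseteq\Omega\setminus A(\mathcal{M})$, take $\lambda\in A(\mathcal{M})$. Then some $p$ satisfies $p(\lambda)\ne0$ and $p\otimes e_i\in\mathcal{M}$ for every $i$. Evaluating at $\lambda$ gives the vectors $p(\lambda)e_i$, which span $\mathbb{C}^N$, so $\dim\mathcal{M}_\lambda=N$ and $\lambda\notin\mathcal{Z}(\mathcal{M})$.

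The reverse inclusion is the main step. Suppose $\dim\mathcal{M}_\lambda=N$; we must produce a polynomial $p$ in the annihilator with $p(\lambda)\neq 0$.

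1. Since evaluation at $\lambda$ is continuous on $H_{\mathfrak{K}}\otimes\mathbb{C}^N$ and $\mathcal{M}_0$ is dense in $\mathcal{M}$, the set $E_\lambda(\mathcal{M}_0)$ is dense in, hence equal to, the finite-dimensional space $\mathcal{M}_\lambda=\mathbb{C}^N$. So we may choose $q_1,\dots,q_N\in\mathcal{M}_0$ with $q_j(\lambda)$ linearly independent.

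2. Form the polynomial matrix $Q=[q_1,\dots,q_N]$ and set $p=\det Q$. Then $p(\lambda)\ne0$.

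3. Cramer's rule (adjugate) gives $Q\,\operatorname{adj}(Q)=p\,I$. Thus each $p\otimes e_i$ is a $\mathcal{P}_d$-linear combination of $q_1,\dots,q_N$, namely the $i$-th column of $Q\operatorname{adj}Q$, and so lies in $\mathcal{M}_0\subseteq\mathcal{M}$.

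Hence $\lambda\in A(\mathcal{M})$, completing the first equality.

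The only delicate point is the density argument in step 1: it needs continuity of point evaluations, which holds in a reproducing kernel Hilbert space. It also needs the polynomial generation hypothesis, so that $\mathcal{M}$ is the closure of $\mathcal{M}_0$ rather than merely of the module generated by finitely many polynomials; either reading suffices. Everything else is the finite-dimensional adjugate identity, the same determinant device already used in the proofs of Theorem \ref{thm 1.2} and Proposition \ref{3 equal}.
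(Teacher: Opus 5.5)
Your proof is correct, but for the key inclusion it takes a different, more elementary route than the paper.

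The paper closes the cycle $\mathcal{Z}(\mathcal{M})\subset\Omega\setminus A(\mathcal{M})\subset Z(\operatorname{Ann}_{\mathcal{P}_d}(\mathcal{Q}))\cap\Omega$. It then proves $Z(\operatorname{Ann}_{\mathcal{P}_d}(\mathcal{Q}))\cap\Omega\subset\mathcal{Z}(\mathcal{M})$ by localizing at $\mathfrak{m}_\lambda$:
\begin{itemize}
\item it identifies the fibre $(\mathcal{P}_d\otimes\mathbb{C}^N)_{\mathfrak{m}_\lambda}/\mathfrak{m}_\lambda(\mathcal{P}_d\otimes\mathbb{C}^N)_{\mathfrak{m}_\lambda}$ with $\mathbb{C}^N$ via evaluation, so that $\mathcal{M}_\lambda=\mathbb{C}^N$ becomes surjectivity modulo $\mathfrak{m}_\lambda$;
\item it applies Nakayama's lemma to get $\mathcal{Q}_{\mathfrak{m}_\lambda}=0$;
\item it concludes with $\operatorname{Supp}(\mathcal{Q})=\{\mathfrak{p}\in\operatorname{Spec}\mathcal{P}_d:\mathfrak{p}\supseteq\operatorname{Ann}_{\mathcal{P}_d}(\mathcal{Q})\}$ for finitely generated modules.
\end{itemize}

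You replace all of this with the adjugate identity $Q\operatorname{adj}(Q)=(\det Q)I$. This gives an explicit element $\det[q_1,\dots,q_N]$ of the annihilator that does not vanish at $\lambda$. It is the determinant trick that underlies Nakayama's lemma, made concrete because $\mathcal{P}_d\otimes\mathbb{C}^N$ is free of rank $N$. You also prove the second equality directly, by identifying $\operatorname{Ann}_{\mathcal{P}_d}(\mathcal{Q})$ with $\{p:\ p\otimes e_i\in\mathcal{M},\ 1\le i\le N\}$, instead of going through the cycle of inclusions.

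What each approach buys:
\begin{itemize}
\item Your version is self-contained and produces an explicit witness. It also shows that the determinants $\det[q_1,\dots,q_N]$ with $q_j\in\mathcal{M}\cap(\mathcal{P}_d\otimes\mathbb{C}^N)$ already cut out $\mathcal{Z}(\mathcal{M})$ in $\Omega$. These determinants generate the zeroth Fitting ideal of $\mathcal{Q}$, so your statement is slightly finer than the lemma.
\item The paper's version gives the conceptual picture of $\mathcal{Z}(\mathcal{M})$ as the support of the quotient module $\mathcal{Q}$, which fits the commutative-algebra framing of that section.
\end{itemize}

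Both arguments rest on the same density step $E_\lambda(\mathcal{M})=E_\lambda(\mathcal{M}\cap(\mathcal{P}_d\otimes\mathbb{C}^N))$. You justify it via continuity of point evaluations; the paper simply asserts it from polynomial generation.
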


\begin{proof}
Obviously, 
$$
  \mathcal{Z}(\mathcal{M})\subset \Omega\setminus A(\mathcal{M})\subset Z(\operatorname{Ann}_{\mathcal{P}_d}(\mathcal{Q}))\cap\Omega.
$$
Hence it is sufficient to prove that $Z(\operatorname{Ann}_{\mathcal{P}_d}(\mathcal{Q}))\cap\Omega\subset \mathcal{Z}(\mathcal{M})$. Since $\mathcal{M}$ is polynomially generated, 
\begin{equation}\label{eq:fibre}
\mathcal{M}_\lambda=\{p(\lambda):p\in\mathcal{M}\cap(\mathcal{P}_d\otimes\mathbb{C}^N)\},\quad\lambda\in\Omega.
\end{equation}
Let $\mathfrak{m}_\lambda=\{p\in\mathcal{P}_d:p(\lambda)=0\}$. Localising at $\mathfrak{m}_\lambda$,
the evaluation at $\lambda$ induces a homomorphism
$$
\begin{aligned}
\phi_\lambda: (\mathcal{P}_d\otimes\mathbb{C}^N)_{\mathfrak{m}_\lambda}/\mathfrak{m}_\lambda(\mathcal{P}_d\otimes\mathbb{C}^N)_{\mathfrak{m}_\lambda}&\to \mathbb{C}^N\\
\left[\frac{p}{s}\right]&\mapsto \frac{p(\lambda)}{s(\lambda)},
\end{aligned}
$$
where 
$$
(\mathcal{P}_d\otimes\mathbb{C}^N)_{\mathfrak{m}_\lambda}=(\mathcal{P}_d\setminus {\mathfrak{m}_\lambda})^{-1}(\mathcal{P}_d\otimes\mathbb{C}^N).
$$
We claim that $\phi_\lambda$ is isomorphic. Since $\phi_\lambda([\frac{1\otimes e_i}{1}])=e_i$, $\phi_\lambda$ is surjective. Suppose that $\frac{p_1(\lambda)}{s_1(\lambda)}=\frac{p_2(\lambda)}{s_2(\lambda)}$, then $\frac{p_1s_2-p_2s_1}{s_1s_2}(\lambda)=0$, which implies that $\frac{p_1s_2-p_2s_1}{s_1s_2}\in \mathfrak{m}_\lambda(\mathcal{P}_d\otimes\mathbb{C}^N)_{\mathfrak{m}_\lambda}$, hence $\left[\frac{p_1}{s_1}\right]=\left[\frac{p_2}{s_2}\right]$, therefore $\phi_\lambda$ is an injective. The claim is proved. Then the natural embedding $(\mathcal{M}\cap(\mathcal{P}_d\otimes\mathbb{C}^N))_{\mathfrak{m}_\lambda}\hookrightarrow (\mathcal{P}_d\otimes\mathbb{C}^N)_{\mathfrak{m}_\lambda}$ induces a homomorphism
$$
\varphi_\lambda: (\mathcal{M}\cap(\mathcal{P}_d\otimes\mathbb{C}^N))_{\mathfrak{m}_\lambda}/{\mathfrak{m}_\lambda}(\mathcal{M}\cap(\mathcal{P}_d\otimes\mathbb{C}^N))_{\mathfrak{m}_\lambda}\hookrightarrow
(\mathcal{P}_d\otimes\mathbb{C}^N)_{\mathfrak{m}_\lambda}/\mathfrak{m}_\lambda(\mathcal{P}_d\otimes\mathbb{C}^N)_{\mathfrak{m}_\lambda}\cong\mathbb{C}^N.
$$
Since 
$$
\operatorname{Im}\varphi_\lambda=\left\{\frac{p(\lambda)}{s(\lambda)}:~p\in \mathcal{M}\cap(\mathcal{P}_d\otimes\mathbb{C}^N),~s\in \mathcal{P}_d\setminus\mathfrak{m}_\lambda\right\},
$$
by \eqref{eq:fibre}, $\operatorname{Im}\varphi_\lambda=\mathcal{M}_{\lambda}$. Suppose that $\lambda\in \Omega\setminus\mathcal{Z}(\mathcal{M})$, then $\mathcal{M}_{\lambda}=\mathbb{C}^N$. Hence $\varphi_\lambda$ is surjective, which implies that
\begin{equation}\label{4301}
(\mathcal{M}\cap(\mathcal{P}_d\otimes\mathbb{C}^N))_{\mathfrak{m}_\lambda}+\mathfrak{m}_\lambda(\mathcal{P}_d\otimes\mathbb{C}^N)_{\mathfrak{m}_\lambda}=(\mathcal{P}_d\otimes\mathbb{C}^N)_{\mathfrak{m}_\lambda}.
\end{equation}
Then Nakayama's lemma \cite[Corollary 2.7]{Atiyah} yields
$$
(\mathcal{M}\cap(\mathcal{P}_d\otimes\mathbb{C}^N))_{\mathfrak{m}_\lambda}=(\mathcal{P}_d\otimes\mathbb{C}^N)_{\mathfrak{m}_\lambda},
$$
Hence by \cite[Proposition 3.3]{Atiyah}
\begin{equation}\label{4302}
\mathcal{Q}_{\mathfrak{m}_\lambda}=0. 
\end{equation}
Since $\mathcal{Q}$ is a finitely generated $\mathcal{P}_d$-module, by \cite[Exercise 3.19]{Atiyah},
$$
\operatorname{Supp}(\mathcal{Q})=\{\mathfrak{p}\in\operatorname{Spec}\mathcal{P}_d:\mathfrak{p}\supseteq \operatorname{Ann}_{\mathcal{P}_d}(\mathcal{Q})\},
$$
where $\operatorname{Supp}(\mathcal{Q})=\{\mathfrak{p}\in\operatorname{Spec}\mathcal{P}_d:\mathcal{Q}_{\mathfrak{p}}\neq 0\}$ and $\operatorname{Spec}\mathcal{P}_d$ is the prime spectrum of $\mathcal{P}_d$. By \eqref{4302},
$\mathfrak{m}_\lambda\notin\operatorname{Supp}(\mathcal{Q})$, hence $\mathfrak{m}_\lambda\not\supseteq \operatorname{Ann}_{\mathcal{P}_d}(\mathcal{Q})$, which implies that
$
\lambda\notin Z(\operatorname{Ann}_{\mathcal{P}_d}(\mathcal{Q})).
$
Therefore
$$
Z(\operatorname{Ann}_{\mathcal{P}_d}(\mathcal{Q}))\cap\Omega\subset \mathcal{Z}(\mathcal{M}).
$$
\end{proof}
For a submodule $\mathcal{M}$ of $H_{\mathfrak{K}}\otimes\mathbb{C}^N$, set
    \begin{equation}\label{bianhao111}
   \text{mp}(\mathcal{M})=\{\lambda \in \Omega: \operatorname{dim}E_{\lambda}\mathcal{M}=\operatorname{fd}(\mathcal{M})\},
    \end{equation}
A submodule $\mathcal{M}$ is said to be locally algebraic if there exists a $\lambda_{0}\in \operatorname{mp}(\mathcal{M})$ and polynomials $\{p_i\}_{i=1}^m \subseteq \mathcal{M}$ such that
  $$
  E_{\lambda_{0}}\mathcal{M}=\operatorname{span}\{p_i(\lambda_{0}):1\leq i\leq m\}.
  $$
\begin{lem}\label{prop:vector_local_multiplier_unified}
Let $\mathcal{M}_1,\mathcal{M}_2\subset H_{\mathfrak{K}}\otimes \mathbb{C}^N$ be locally algebraic submodules, and $\mathcal{Z}(\mathcal{M}_1)\cup \mathcal{Z}(\mathcal{M}_2)\subsetneq \Omega$. For simplicity, set $\mathbf{Z}=\mathcal{Z}(\mathcal{M}_1)\cup \mathcal{Z}(\mathcal{M}_2)$. Let
\[
U:\mathcal{M}_1\to \mathcal{M}_2
\]
be a continuous module isomorphism, then there exist holomorphic matrix-valued functions
\[
\Phi,\Psi:\Omega\setminus \mathbf{Z} \to M_{N}(\mathbb C),
\]
such that
\[
(Uf)(\lambda)=\Phi(\lambda)f(\lambda),
\quad \lambda\in \Omega\setminus \mathbf{Z},\ f\in \mathcal{M}_1,
\]
and
\[
(U^{-1}g)(\lambda)=\Psi(\lambda)g(\lambda),
\quad \lambda\in \Omega\setminus \mathbf{Z},\ g\in \mathcal{M}_2.
\]
 Moreover, 
\[
\Psi(\lambda)\Phi(\lambda)=I_{\mathbb{C}^N}=\Phi(\lambda)\Psi(\lambda),
\quad \lambda\in \Omega\setminus \mathbf{Z}.
\]
\end{lem}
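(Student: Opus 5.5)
Since $\lambda\notin\mathbf Z$ means $E_\lambda\mathcal M_1=E_\lambda\mathcal M_2=\mathbb C^N$, and both submodules are locally algebraic, the plan is to define $\Phi(\lambda)$ on a basis of $\mathbb C^N$ given by polynomial elements of $\mathcal M_1$. Then extend the identity $(Uf)(\lambda)=\Phi(\lambda)f(\lambda)$ to all of $\mathcal M_1$ by a module-action trick. First, by Proposition \ref{3 equal} with $\mathcal R=\mathcal P_d$ (the argument there only uses holomorphy, so it works on $\Omega$), local algebraicity gives $E_\lambda\mathcal M_i=E_\lambda(\mathcal M_i\cap(\mathcal P_d\otimes\mathbb C^N))$ on a dense open set. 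Since $\mathbf Z\neq\Omega$, we have $\operatorname{fd}(\mathcal M_i)=N$, so $\mathcal M_i\cap(\mathcal P_d\otimes\mathbb C^N)$ contains $N$ polynomials $p_1,\dots,p_N$ with $D(z)=\det[p_1(z),\dots,p_N(z)]\not\equiv0$.

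Next comes the key identity. For $f\in\mathcal M_1$ and polynomials $q$, $U(qf)=qUf$. For $f,g\in\mathcal M_1$ and vectors, one has $g(\lambda)$-dependence arguments. The cleanest route is a Cramer's-rule identity. Write $P(z)=[p_1(z),\dots,p_N(z)]$ and $\operatorname{adj}P$ for its adjugate, which has polynomial entries. For $f\in\mathcal M_1$, the function $Df-P\operatorname{adj}(P)f=0$ pointwise. However, $\operatorname{adj}(P)f$ has entries in $H_{\mathfrak K}$, not polynomials, so instead we use this: for each $\lambda$ with $D(\lambda)\neq0$, write $f(\lambda)=\sum c_j p_j(\lambda)$. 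Then $h=f-\sum c_j p_j\in\mathcal M_1$ vanishes at $\lambda$.

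The main obstacle is to show that $(Uh)(\lambda)=0$ whenever $h\in\mathcal M_1$ and $h(\lambda)=0$, with $\lambda\notin\mathbf Z$. I would do this by showing that $\{h\in\mathcal M_1:h(\lambda)=0\}=\overline{\sum_i (z_i-\lambda_i)\mathcal M_1}$, using finite codimension/local algebraicity. Then $U$ maps this set into $\overline{\sum(z_i-\lambda_i)\mathcal M_2}$, whose elements vanish at $\lambda$, by module intertwining and continuity of evaluation. To prove the equality, note that $\mathcal M_1\supseteq\mathcal M_1\cap(\mathcal P_d\otimes\mathbb C^N)$. By Lemma \ref{prop:scheme_vector_free_off_support}, or directly from $\lambda\in A(\mathcal M_1)$, there is a polynomial $s$ with $s(\lambda)\neq0$ and $s\otimes e_i\in\mathcal M_1$ for all $i$. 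Then write $s^2h=s\cdot(sh)$ and decompose $sh$ via a Taylor/Hilbert-type division: $h(z)=\sum_i(z_i-\lambda_i)h_i(z)$ with $h_i$ holomorphic. Since $s\otimes e_j\in\mathcal M_1$, we get $sh=\sum_i (z_i-\lambda_i)(s h_i)$, provided $sh_i\in\mathcal M_1$; this is where we need the division $h_i\in H_{\mathfrak K}\otimes\mathbb C^N$. That division is the delicate point, and I would instead use the cheaper consequence $s(\lambda)(Uh)(\lambda)=(U(sh))(\lambda)$ combined with the fact that $\mathcal M_1\ni s\otimes e_j$ generates a finite-codimensional piece, so evaluation kernels can be handled in the finite-dimensional quotient, where $U$ is determined by polynomial data.

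Granting that, define $\Phi(\lambda)=[(Up_1)(\lambda),\dots,(Up_N)(\lambda)]P(\lambda)^{-1}$, which is holomorphic where $D\neq0$. Independence from the choice of $p_j$ follows from the vanishing claim, and this lets us cover all of $\Omega\setminus\mathbf Z$, since at each such $\lambda$ some polynomial basis exists. We then get $(Uf)(\lambda)=\Phi(\lambda)f(\lambda)$. Define $\Psi$ symmetrically from $U^{-1}$. Finally, $\Psi(\lambda)\Phi(\lambda)f(\lambda)=(U^{-1}Uf)(\lambda)=f(\lambda)$, and since $E_\lambda\mathcal M_1=\mathbb C^N$ this gives $\Psi\Phi=I$; similarly $\Phi\Psi=I$.
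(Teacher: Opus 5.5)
Your reduction is sound, but the step you identify as the crux is never proved. That step is $(Uh)(\lambda)=0$ whenever $h\in\mathcal M_1$ and $h(\lambda)=0$; you write ``Granting that'', and neither route you sketch works. Your proof of $\{h\in\mathcal M_1:h(\lambda)=0\}=\overline{\sum_i(z_i-\lambda_i)\mathcal M_1}$ goes through an exact division $h=\sum_i(z_i-\lambda_i)h_i$ with $h_i\in H_{\mathfrak K}\otimes\mathbb C^N$. That is Gleason's problem, and nothing in the hypotheses on $H_{\mathfrak K}$ provides it. The lemma also has no finite-codimension hypothesis, so that cannot be invoked either. Your fallback is false as stated. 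The submodule generated by $s\otimes e_1,\dots,s\otimes e_N$ is $[s]\otimes\mathbb C^N$, which has infinite codimension as soon as $d\ge 2$ and $s$ vanishes somewhere in $\Omega$.

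The missing idea is elementary: approximate by polynomials in the \emph{ambient} space, not inside $\mathcal M_1$. Suppose $s\otimes e_j\in\mathcal M_1$ for all $j$ and $q_i=\sum_j q_{i,j}\otimes e_j\to f\in\mathcal M_1$ in $H_{\mathfrak K}\otimes\mathbb C^N$. Then $sq_i=\sum_j q_{i,j}(s\otimes e_j)\in\mathcal M_1$, $sq_i\to sf$, and $U(sq_i)=\sum_j q_{i,j}\,U(s\otimes e_j)$. Continuity of $U$ and of point evaluations gives
\[
s(\lambda)\,(Uf)(\lambda)=\sum_{j=1}^N f_j(\lambda)\,\bigl(U(s\otimes e_j)\bigr)(\lambda),\qquad \lambda\in\Omega .
\]
So on $\Omega\setminus Z(s)$ you can simply set $\Phi(\lambda)=s(\lambda)^{-1}\bigl[(U(s\otimes e_1))(\lambda),\dots,(U(s\otimes e_N))(\lambda)\bigr]$, and your vanishing claim is immediate there. (Your closed-span identity does hold at such points, but proving it needs exactly this approximation, which already finishes the job.) You already had a suitable $s$ without Lemma \ref{prop:scheme_vector_free_off_support}: the adjugate gives $D\otimes e_j=\sum_k\operatorname{adj}(P)_{kj}\,p_k\in\mathcal M_1$. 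This is the paper's argument, with $s=p_{\lambda_0}$.

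Your covering step also fails. It is not true that every $\lambda\in\Omega\setminus\mathbf Z$ carries a polynomial basis, nor that $\Omega\setminus\mathcal Z(\mathcal M_1)\subset A(\mathcal M_1)$. Lemma \ref{prop:scheme_vector_free_off_support} needs the submodule to be polynomially generated, whereas $\mathcal M_1$ is only locally algebraic. For example, in $H^2_2$ with $N=1$, let $\mathcal M_1=[z_1,\phi(z_2)]$, where $\phi$ is a Blaschke product in $z_2$ with zeros $w_k$ and $\phi(0)\neq 0$. Every element of $\mathcal M_1$ vanishes at all $(0,w_k)$. So for any polynomial $p\in\mathcal M_1$, the one-variable polynomial $p(0,\cdot)$ has infinitely many zeros and $p(0,0)=0$, although $0\notin\mathcal Z(\mathcal M_1)$.

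At such points you need the paper's local construction:
\begin{enumerate}
\item Near $\mu_0\in\Omega\setminus\mathbf Z$, pick arbitrary $f_1,\dots,f_N\in\mathcal M_1$ with $f_j(\mu_0)$ spanning $\mathbb C^N$.
\item On a neighbourhood $V$, set $\Phi_V=[Uf_1,\dots,Uf_N]\,[f_1,\dots,f_N]^{-1}$.
\item Check that $\Phi_V$ agrees with the formula above on the dense set $V\setminus Z(s)$.
\item Extend $(Uf)(\lambda)=\Phi(\lambda)f(\lambda)$ to all of $\Omega\setminus\mathbf Z$ by continuity.
\end{enumerate}
Your final step deriving $\Psi\Phi=I=\Phi\Psi$ is fine.
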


\begin{proof}
By the definition (\ref{def:zero_points}) of $\mathcal{Z}(\mathcal{M}_1)$,  for $\lambda\not\in \mathcal{Z}(\mathcal{M}_1)$, $\dim \mathcal{M}_1{_\lambda}=N$.
Since $\mathcal{M}_1$ is locally algebraic and $\mathcal{Z}(\mathcal{M}_1)\subsetneq \Omega$, there is $\lambda_0\in \Omega\setminus\mathcal{Z}(\mathcal{M}_1)$ and $\{p_{j}\}_{j=1}^N\subset \mathcal{M}_1\cap\mathcal{P}_d\otimes \mathbb C^N$ such that
$$
\dim \operatorname{span}\{p_{j}(\lambda_0):1\leq j\leq N\}=N.
$$
Let $\mathcal{I}_1=[p_{1},\cdots,p_N]\subset \mathcal{M}_1$ be the submodule of $H_\mathfrak{K}\otimes\mathbb{C}^N$ generated by $\{p_1,\cdots,p_N\}$, then $\lambda_0\in \Omega\setminus \mathcal{Z}(\mathcal{I}_1)$. By Lemma~\ref{prop:scheme_vector_free_off_support} there exists a polynomial $p_{\lambda_0} \in \mathcal{P}_d$ such that $p_{\lambda_0}(\lambda_0) \neq 0$ and $p_{\lambda_0} \otimes e_j \in \mathcal{I}_1$ for all $j = 1,\dots,N$. For $\lambda\in \Omega\setminus Z(p_{\lambda_0})$, we define
$$
\tilde{\Phi}(\lambda)=\frac{1}{p_{\lambda_0}(\lambda)}\big[ U(p_{\lambda_0}\otimes e_1)(\lambda),\cdots,(U(p_{\lambda_0}\otimes e_N))(\lambda)\big].
$$
For $f\in\mathcal{M}_1$, there is $\{q_i\}_{i=1}^\infty\subset \mathcal{P}_d\otimes\mathbb{C}^N$ such that $\lim\limits_{i\to \infty}q_i=f$ in the norm of $H_{\mathfrak{K}}\otimes \mathbb{C}^N$. Hence $\lim\limits_{i\to \infty}p_{\lambda_0}q_i=p_{\lambda_0}f$, which implies that
\begin{equation}\label{4.4.star1}
  \lim\limits_{i\to \infty} (U(p_{\lambda_0}q_i))(\lambda)=(U(p_{\lambda_0}f))(\lambda)=p_{\lambda_0}(\lambda)(Uf)(\lambda), ~\forall \lambda\in \Omega.
\end{equation}
On the other hand, write $q_i=\sum\limits_{i=1}^{N}q_{i,j}\otimes e_j$, then from
$$
U(p_{\lambda_0}q_i)=U\left(p_{\lambda_0}\sum_{j=1}^N q_{i,j}\otimes e_j\right)=\sum_{j=1}^N q_{i,j} U(p_{\lambda_0}\otimes e_j),
$$
we have for $\lambda\in\Omega\setminus Z(p_{\lambda_0})$,
\begin{equation}\label{4.4.star2}
 \lim\limits_{i\to \infty} (U(p_{\lambda_0}q_i))(\lambda)=\lim\limits_{i\to \infty}\sum_{j=1}^N q_{i,j}(\lambda) (U(p_{\lambda_0}\otimes e_j))(\lambda)=\lim\limits_{i\to \infty}p_{\lambda_0}(\lambda)\tilde{\Phi}(\lambda)q_i(\lambda)=p_{\lambda_0}(\lambda)\tilde{\Phi}(\lambda)f(\lambda).
\end{equation}
Then by \eqref{4.4.star1} and \eqref{4.4.star2},
\begin{equation}\label{4.4.star3}
  (Uf)(\lambda)=\tilde{\Phi}(\lambda)f(\lambda),~\forall \lambda\in \Omega\setminus Z(p_{\lambda_0}),~f\in \mathcal{M}_1.
\end{equation}

For fixed $\mu_0\in \Omega\setminus \mathbf{Z}$, there exist $f_1,\cdots,f_N\in \mathcal{M}_1$ such that $\operatorname{span}\{f_i(\mu_0):~1\leq i\leq N\}=\mathbb{C}^N$. Hence there exists a neighborhood $V$ of $\mu_0$ contained in $\Omega\setminus \mathbf{Z}$ such that 
$$
\operatorname{span}\{f_i(\lambda):~1\leq i\leq N\}=\mathbb{C}^N
$$ 
for $\lambda\in V$, which implies that
$$
F_V(\lambda)=[f_1(\lambda),\cdots,f_N(\lambda)]
$$
is invertible in $V$. For $\lambda\in V$, write $H_V(\lambda)=[(Uf_1)(\lambda),\cdots,(Uf_N)(\lambda)]$ and $\Phi_V(\lambda)=H_V(\lambda)F_V^{-1}(\lambda)$. By \eqref{4.4.star3}, for $\lambda\in V\cap (\Omega\setminus Z(p_{\lambda_0}))$ we have
$$
\begin{aligned}
\Phi_V(\lambda)
&=H_V(\lambda)F_V^{-1}(\lambda)\\
&=[(Uf_1)(\lambda),\cdots,(Uf_N)(\lambda)]F_V^{-1}(\lambda)\\
&=[\tilde{\Phi}(\lambda)f_1(\lambda),\cdots, \tilde{\Phi}(\lambda)f_N(\lambda)]F_V^{-1}(\lambda)\\
&=\tilde{\Phi}(\lambda)F_V(\lambda)F_V^{-1}(\lambda)\\
&=\tilde{\Phi}(\lambda).
\end{aligned}
$$
Hence $\tilde{\Phi}$ can be extended to a holomorphic function
\[
\Phi : \Omega\setminus \mathbf{Z} \to M_N(\mathbb{C})
\]
satisfying $(Uf)(\lambda) = \Phi(\lambda) f(\lambda)$ for all $\lambda \in \Omega\setminus \mathbf{Z} $ and $f \in \mathcal{M}_1$. 
Applying the identical argument to the inverse isomorphism $U^{-1} : \mathcal{M}_2 \to \mathcal{M}_1$
yields a holomorphic matrix-valued function $\Psi : \Omega\setminus \mathbf{Z} \to M_N(\mathbb{C})$ such that
$(U^{-1}g)(\lambda) = \Psi(\lambda) g(\lambda)$ for all $\lambda \in \Omega\setminus \mathbf{Z}$ and $g \in \mathcal{M}_2$. Moreover, for any $f \in \mathcal{M}_1$,
\begin{equation}\label{u.c}
f(\lambda) = (U^{-1} U f)(\lambda) = \Psi(\lambda) \Phi(\lambda) f(\lambda), \quad \lambda \in \Omega\setminus \mathbf{Z}.
\end{equation}
For $\lambda\in \Omega\setminus \mathbf{Z}$, we have $\{f(\lambda)|~f\in \mathcal{M}_1\}=\mathbb{C}^N$. Then from \eqref{u.c}, $\Psi(\lambda)\Phi(\lambda) = I_{\mathbb{C}^N}$ for $\lambda\in \Omega\setminus \mathbf{Z}$. A similar argument to $g \in \mathcal{M}_2$ yields $\Phi(\lambda) \Psi(\lambda) = I_{\mathbb{C}^N}$ for $\lambda\in \Omega\setminus \mathbf{Z}$.
\end{proof}
\begin{lem}\label{lem:finite-dimensional-estimate}
Let \(L\subset H_\mathfrak{K}\otimes\mathbb C^N\) be a finite-dimensional subspace. Then
\[
        \|P_L(\mathfrak{K}_\lambda\otimes y)\|^2
        =
        o(\mathfrak{K}(\lambda,\lambda))\|y\|^2,
        \qquad \lambda\to\partial\Omega,
        \tag{4.3}
\]
uniformly for \(y\in\mathbb C^N\).
\end{lem}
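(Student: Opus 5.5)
Choose an orthonormal basis $u_1,\dots,u_m$ of $L$; since $L$ is finite-dimensional, $m<\infty$. Then
\[
P_L(\mathfrak{K}_\lambda\otimes y)=\sum_{k=1}^m\langle \mathfrak{K}_\lambda\otimes y,u_k\rangle u_k ,
\]
so that
\[
\|P_L(\mathfrak{K}_\lambda\otimes y)\|^2=\sum_{k=1}^m|\langle \mathfrak{K}_\lambda\otimes y,u_k\rangle|^2 .
\]
By the reproducing property, $\langle \mathfrak{K}_\lambda\otimes y,u_k\rangle=\langle y,u_k(\lambda)\rangle_{\mathbb{C}^N}$. Cauchy--Schwarz then gives
\[
\|P_L(\mathfrak{K}_\lambda\otimes y)\|^2\le \Big(\sum_{k=1}^m\|u_k(\lambda)\|^2\Big)\|y\|^2 .
\]
This bound is already uniform in $y$, so it suffices to show that $\|u(\lambda)\|^2=o(\mathfrak{K}(\lambda,\lambda))$ as $\lambda\to\partial\Omega$ for each fixed $u\in H_\mathfrak{K}\otimes\mathbb C^N$. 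Applying this to each of the finitely many $u_k$ and summing finishes the proof.

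For a single $u$, write $u=\sum_{j=1}^N u^{(j)}\otimes e_j$. Then $\|u(\lambda)\|^2=\sum_j|u^{(j)}(\lambda)|^2$, so we reduce to the scalar statement $|h(\lambda)|^2=o(\mathfrak{K}(\lambda,\lambda))$ for every $h\in H_\mathfrak{K}$. This is the standard density argument, and it is the only step with any content. Fix $\varepsilon>0$. By assumption (1), $\mathcal P_d$ is dense in $H_\mathfrak{K}$, so we may choose a polynomial $p$ with $\|h-p\|<\varepsilon$. Then
\[
|h(\lambda)|\le |p(\lambda)|+|\langle h-p,\mathfrak{K}_\lambda\rangle|\le \sup_{\overline\Omega}|p|+\varepsilon\,\mathfrak{K}(\lambda,\lambda)^{1/2}.
\]
Here $\sup_{\overline\Omega}|p|<\infty$ because $\Omega$ is bounded. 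Dividing by $\mathfrak{K}(\lambda,\lambda)^{1/2}$ and using assumption (3), $\mathfrak{K}(\lambda,\lambda)\to\infty$, we get
\[
\limsup_{\lambda\to\partial\Omega}\frac{|h(\lambda)|}{\mathfrak{K}(\lambda,\lambda)^{1/2}}\le\varepsilon .
\]
Since $\varepsilon$ is arbitrary, this proves the scalar statement.

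No genuine obstacle is expected. The two points requiring care are both handled above. First, the approximating polynomial must be bounded near $\partial\Omega$, which is where boundedness of $\Omega$ is used. Second, uniformity in $y$ must come from the finite-dimensionality of $L$, which lets us pull $\|y\|^2$ out of a sum of finitely many terms before taking the limit.
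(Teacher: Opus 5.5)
Your proof is correct and follows the paper's argument: an orthonormal basis of $L$, the Cauchy--Schwarz bound $\|P_L(\mathfrak{K}_\lambda\otimes y)\|^2\le \|y\|^2\sum_k\|u_k(\lambda)\|^2$, and then $\|u_k(\lambda)\|^2=o(\mathfrak{K}(\lambda,\lambda))$ for each of the finitely many $u_k$. The only difference is that you prove this last fact, i.e.\ weak convergence of the normalized kernels to $0$, by the density-of-polynomials argument, whereas the paper just calls it ``easy to see''; your version also shows that assumption (2) is not needed here.
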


\begin{proof}
Let \(u_1,\ldots,u_q\) be an orthonormal basis of \(L\). Write
\[
        u_r=(u_{r,1},\ldots,u_{r,N}),
        \qquad u_{r,j}\in H_\mathfrak{K}.
\]
By the assumption (1)(2)(3), it is easy to see that $\frac{\mathfrak{K}_\lambda}{\|\mathfrak{K}_\lambda\|}$ converge weakly to \(0\) in \(H_\mathfrak{K}\) as \(\lambda\to\partial\Omega\). Hence for $1\leq r\leq q$, 
\begin{equation}\label{4.5star1}
        \frac{\|u_r(\lambda)\|_{\mathbb C^N}^2}
             {{\mathfrak{K}(\lambda,\lambda)}}
        =\sum_{j=1}^{N}
        \frac{|u_{r,j}(\lambda)|^2}{{\mathfrak{K}(\lambda,\lambda)}}
        =\sum_{j=1}^{N}
        |\langle u_{r,j},\mathfrak{k}_\lambda\rangle|^2
        \longrightarrow0.
\end{equation}
For \(y\in\mathbb C^N\), 
\begin{equation}\label{4.5star2}
        \|P_L(\mathfrak{K}_\lambda\otimes y)\|^2
        =
        \sum_{r=1}^q
        |\langle \mathfrak{K}_\lambda\otimes y,u_r\rangle|^2        
        =
        \sum_{r=1}^q
        |\langle y,u_r(\lambda)\rangle_{\mathbb C^N}|^2  
        \le
        \|y\|^2\sum_{r=1}^q\|u_r(\lambda)\|_{\mathbb C^N}^2 .
\end{equation}
By \eqref{4.5star1},
\[
        \sum_{r=1}^q\|u_r(\lambda)\|_{\mathbb C^N}^2
        =
        o(\mathfrak{K}(\lambda,\lambda)),
\]
then by \eqref{4.5star2}, we obtain
\[
        \|P_L(\mathfrak{K}_\lambda\otimes y)\|^2
        =
        o(\mathfrak{K}(\lambda,\lambda))\|y\|^2,
\]
uniformly for \(y\in\mathbb C^N\).
\end{proof}

\begin{proof}[Proof of Theorem~\ref{thm:vector-valued-rigidity}]
Let
\[
        \pi:H_\mathfrak{K}\otimes\mathbb C^N
        \longrightarrow
        (H_\mathfrak{K}\otimes\mathbb C^N)/\mathcal{M}_1
\]
be the quotient map. Since \(\mathcal{P}_d\otimes\mathbb C^N\) is dense in
\(H_\mathfrak{K}\otimes\mathbb C^N\), 
$
        \pi\bigl(\mathcal{P}_d\otimes\mathbb C^N\bigr)
$
is dense in the finite-dimensional space
$
        (H_\mathfrak{K}\otimes\mathbb C^N)/\mathcal{M}_1.
$
Hence there exist \(v_1,\ldots,v_s\in\mathcal{P}_d\otimes\mathbb{C}^N\) such that
$
        \pi(v_1),\ldots,\pi(v_s)
$
form a basis of \((H_\mathfrak{K}\otimes\mathbb C^N)/\mathcal{M}_1\). For \(f\in \mathcal{M}_1\), there exist
polynomials $\{q_n\}_{n=1}^\infty\subset \mathcal{P}_d\otimes\mathbb C^N$ such that
$
        \lim\limits_{n\to\infty}q_n=f.
$
Write
\[
        \pi(q_n)=\sum_{j=1}^s a_{n,j}\pi(v_j),
\]
then $ q_n-\sum\limits_{j=1}^s a_{n,j}v_j\in \mathcal{M}_1\cap(\mathcal{P}_d\otimes\mathbb C^N)$. Since $\lim\limits_{n\to\infty}\pi(q_n)=\pi(f)=0$, we have $\lim\limits_{n\to\infty}a_{n,j}=0$ for every $1\leq j\leq s$. Hence
\[
        \lim\limits_{n\to\infty}\left(q_n-\sum_{j=1}^s a_{n,j}v_j\right)=\lim\limits_{n\to\infty}q_n= f.
\]
Thus $\mathcal{M}_1$ is polynomially generated. By the same reasoning $\mathcal{M}_2$ is also polynomially generated.

By Lemma \ref{prop:scheme_vector_free_off_support}, $\mathcal{Z}(\mathcal{M}_i)=Z(\operatorname{Ann}_{\mathcal{P}_d}(\mathcal{Q}_i))\cap\Omega$, where $\mathcal{Q}_i=(\mathcal{P}_d\otimes\mathbb{C}^N)/(\mathcal{M}_i\cap\mathcal{P}_d\otimes\mathbb{C}^N)$.  Let
$$
\begin{aligned}
\phi_i:\mathcal{P}_d&\to (H_{\mathfrak{K}}/\mathcal{M}_i)^N\\
p&\mapsto ([p\otimes e_1],\cdots,[p\otimes e_N])
\end{aligned}
$$
Then $\ker \phi_i=\operatorname{Ann}_{\mathcal{P}_d}(\mathcal{Q}_i)$, which implies that
 $$
 \operatorname{dim} \mathcal{P}_d/\operatorname{Ann}_{\mathcal{P}_d}(\mathcal{Q}_i)=\dim\operatorname{ran}\phi_i\leq N\dim(H_{\mathfrak{K}}/\mathcal{M}_i)<\infty.
 $$
 Hence $\mathcal{Z}(\mathcal{M}_i)=Z(\operatorname{Ann}_{\mathcal{P}_d}(\mathcal{Q}_i))\cap\Omega$ are finite for $i=1,2$.
Let $\mathbf{Z}=\mathcal{Z}(\mathcal{M}_1)\cup\mathcal{Z}(\mathcal{M}_2)$, by Lemma~\ref{prop:vector_local_multiplier_unified}, there exist holomorphic functions
\[
        \Phi,\Psi:\Omega\setminus \mathbf{Z}\to M_N(\mathbb C)
\]
such that
\begin{equation}\label{4.10}
        (Uf)(\lambda)=\Phi(\lambda)f(\lambda),
        ~\forall f\in \mathcal{M}_1,~ \lambda\in\Omega\setminus \mathbf{Z},
\end{equation}
and
\[
        (U^{-1}g)(\lambda)=\Psi(\lambda)g(\lambda),
        ~\forall g\in \mathcal{M}_2,~ \lambda\in\Omega\setminus \mathbf{Z}.
\]
Moreover,
\begin{equation}\label{4.12}
        \Psi(\lambda)\Phi(\lambda)=I_N=\Phi(\lambda)\Psi(\lambda),
        ~\forall \lambda\in\Omega\setminus \mathbf{Z}.
\end{equation}
Since \(d\ge2\) and \(\mathbf{Z}\) is finite, Hartogs' removable singularity theorem implies that
\(\Phi\) and \(\Psi\) extend holomorphically to all of \(\Omega\). The identities in (4.12)
then extend to all of \(\Omega\). Hence
\[
        \Phi\in\mathcal O(\Omega,GL_N(\mathbb C)),
        \qquad
        \Psi(\lambda)=\Phi(\lambda)^{-1}.
\]

By Lemma~\ref{lem:finite-dimensional-estimate}, there exist
$
        \varepsilon_i(\lambda)\to0
$
as
\( \lambda\to\partial\Omega, \)
such that
\[
        \|P_{\mathcal{M}_i^\perp}(\mathfrak{K}_\lambda\otimes y)\|^2
        \le
        \varepsilon_i(\lambda)\mathfrak{K}(\lambda,\lambda)\|y\|^2,
        ~\forall y\in\mathbb C^N.
\]
Since
\[
\begin{aligned}
        \|P_{\mathcal{M}_i}(\mathfrak{K}_\lambda\otimes y)\|^2
        &=
        \|\mathfrak{K}_\lambda\otimes y\|^2
        -
        \|P_{\mathcal{M}_i^\perp}(\mathfrak{K}_\lambda\otimes y)\|^2,
\end{aligned}
\]
and $\|\mathfrak{K}_\lambda\otimes y\|^2=\mathfrak{K}(\lambda,\lambda)\|y\|^2$, 
we have
\begin{equation}\label{eq:Mi-estimate}
         (1-\varepsilon_i(\lambda))\mathfrak{K}(\lambda,\lambda)\|y\|^2
        \le
        \|P_{\mathcal{M}_i}(\mathfrak{K}_\lambda\otimes y)\|^2
        \le
        \mathfrak{K}(\lambda,\lambda)\|y\|^2, ~i=1,2.
\end{equation}
Fix \(\lambda\in\Omega\setminus \mathbf{Z}\) and \(x\in\mathbb C^N\). For \(f\in \mathcal{M}_1\), using
\eqref{4.10}, we get
\[
        \langle f,U^*P_{\mathcal{M}_2}(\mathfrak{K}_\lambda\otimes x)\rangle
        =
        \langle (Uf)(\lambda),x\rangle_{\mathbb C^N}                             
        =
        \langle \Phi(\lambda)f(\lambda),x\rangle_{\mathbb C^N}                   
        =
        \langle f,P_{\mathcal{M}_1}(\mathfrak{K}_\lambda\otimes\Phi(\lambda)^*x)\rangle ,
\]
which implies that
\[
    \|P_{\mathcal{M}_2}(\mathfrak{K}_\lambda\otimes x)\|
    =
    \|P_{\mathcal{M}_1}(\mathfrak{K}_\lambda\otimes\Phi(\lambda)^*x)\|.
\]
Take $y=\Phi(\lambda)^*x$, by \eqref{eq:Mi-estimate} we obtain
\[
    (1-\varepsilon_1(\lambda))\mathfrak{K}(\lambda,\lambda)\|\Phi(\lambda)^*x\|^2
    \le
    \|P_{\mathcal{M}_1}(\mathfrak{K}_\lambda\otimes\Phi(\lambda)^*x)\|^2
    =
    \|P_{\mathcal{M}_2}(\mathfrak{K}_\lambda\otimes x)\|^2
    \le
    \mathfrak{K}(\lambda,\lambda)\|x\|^2.
\]
which implies that
\[
    \|\Phi(\lambda)^*x\|^2 \le \frac{1}{1-\varepsilon_1(\lambda)}\|x\|^2,
    \qquad \forall\,x\in\mathbb C^N.
\]
Since $\varepsilon_1(\lambda)\to0$ as $\lambda\to\partial\Omega$, it follows that
\begin{equation}\label{eq:Phi-bd}
    \limsup_{\lambda\to\partial\Omega}\|\Phi(\lambda)\|\le1.
\end{equation}
Applying the same argument to the unitary module isomorphism
$U^{-1}:\mathcal{M}_2\to\mathcal{M}_1$ yields
\begin{equation}\label{eq:Phi-inv-bd}
    \limsup_{\lambda\to\partial\Omega}\|\Phi(\lambda)^{-1}\|
    =\limsup_{\lambda\to\partial\Omega}\|\Psi(\lambda)\|
    \le1.
\end{equation}
For each unit vector \(v\in\mathbb C^N\), the function
$
    \lambda\longmapsto \|\Phi(\lambda)v\|^2
$
is plurisubharmonic on \(\Omega\). By \eqref{eq:Phi-bd} and the maximum principle for plurisubharmonic functions,
\[
    \|\Phi(\lambda)v\|\le1,\qquad \forall\,\lambda\in\Omega,\ \|v\|=1,
\]
hence \(\|\Phi(\lambda)\|\le1\) for all \(\lambda\in\Omega\). Applying the same argument to \(\Phi^{-1}\) and using \eqref{eq:Phi-inv-bd} yields \(\|\Phi(\lambda)^{-1}\|\le1\) on \(\Omega\). 
Now, for any \(\lambda\in\Omega\) and any \(x\in\mathbb C^N\),
\[
    \|x\| = \|\Phi(\lambda)^{-1}\Phi(\lambda)x\|
    \le \|\Phi(\lambda)x\|
    \le \|x\|,
\]
which forces \(\|\Phi(\lambda)x\| = \|x\|\). Therefore \(\Phi(\lambda)\) is unitary for every \(\lambda\in\Omega\):
\[
    \Phi(\lambda)^*\Phi(\lambda)=I_N,\qquad \forall\,\lambda\in\Omega.
\]
Differentiating this identity with respect to \(z_j\) and using the fact that \(\Phi^*\) is anti-holomorphic, we get
$
    \Phi(\lambda)^* \frac{\partial\Phi}{\partial z_j}(\lambda)=0.
$
Since \(\Phi(\lambda)\) is unitary, \(\frac{\partial\Phi}{\partial z_j}(\lambda)=0\) for all \(j\). Hence there exists a unitary matrix $W$ such that 
\(\Phi(\lambda)\equiv W\). Then by \eqref{4.10},
\[
        (Uf)(\lambda)=Wf(\lambda),
        \ \forall\,f\in \mathcal{M}_1,~\lambda\in\Omega\setminus \mathbf{Z}.
\]
Since both sides are holomorphic, the equality holds on all of \(\Omega\). Hence
\[
        U=(I_{H_\mathfrak{K}}\otimes W)|_{\mathcal{M}_1}.
\]
Because \(U(\mathcal{M}_1)=\mathcal{M}_2\), we conclude that
\[
        \mathcal{M}_2=(I_{H_\mathfrak{K}}\otimes W)\mathcal{M}_1.
\]
\end{proof}

\section{The finite defect problem: infinitely-many-variable case}

The finite defect problems also make sense for unitarily invariant CNP kernel space $H_{\mathbb{K}}$ in infinitely many variables, where $\mathbb{K}$ is a kernel on the unit ball $\mathbb B$ in a separable infinite-dimensional Hilbert space $H$ defined by
\begin{equation}\label{k inf}
  \mathbb{K}:\mathbb{B} \times \mathbb{B} \rightarrow \mathbb{C}; ~ \mathbb{K}(z,w)=\sum_{n=0}^{\infty}a_n \left\langle z,w \right\rangle^{n},
\end{equation}
with $a_0 = 1$ and $a_n > 0$ for all $n \geq 1$. Moreover, we assume that $\mathbb K$ satisfies the regularity condition(I)-(III).  It is easy to compute that for $\alpha=(\alpha_1,\cdots,\alpha_n,0,0\cdots)\in\mathbb{N}^\mathbb{N}$,
$$
\|z^\alpha\|^2=\frac{\alpha!}{a_{|\alpha|}|\alpha|!}
$$
and hence 
\[
M_{z_i}^* z^{\alpha} =
\begin{cases}
0, & \text{if } \alpha_i = 0, \\
\frac{a_{|\alpha|-1}\alpha_i}{a_{|\alpha|}(\alpha_1 + \cdots + \alpha_n)} z_1^{\alpha_1} \cdots z_i^{\alpha_i - 1} \cdots z_n^{\alpha_n}, & \text{if } \alpha_i > 0.
\end{cases}
\]

Earlier in this paper, we solved the finite defect problem for regular unitarily invariant CNP reproducing kernel Hilbert space of finitely many variables. For infinitely many variables, however, the situation is different. In a previous article \cite{curv}, we obtained the following result.
\begin{thm}\cite{curv}
Let $H^2$ be the Drury-Arveson space of infinitely many variables,  $\cal M$ be a nonzero submodule in $H^2$. If $\cal M$ is finite defect, then $\mathcal{M}=H^2$. 
\end{thm}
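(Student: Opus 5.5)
The argument will be by contradiction, in the spirit of Lemma \ref{no polynomial} and Theorem \ref{M=Hk}; the difference is that infinitely many independent coordinates are now available. Let $\mathcal M\subset H^2$ be a nonzero submodule with $\operatorname{rank}\Delta_{\mathcal M}<\infty$. For the Drury--Arveson kernel we have $b_1=1$ and $b_n=0$ for $n\ge2$, so \eqref{defect operator} becomes
$$
\Delta_{\mathcal M}^2=P_{\mathcal M}\Bigl(E_0+\sum_{i\ge1}[M_{z_i},P_{\mathcal M^\perp}][P_{\mathcal M^\perp},M_{z_i}^*]\Bigr)P_{\mathcal M}.
$$
The analogue of \eqref{3.3.star4} then gives, for $f\in\mathcal M$,
$$
\langle\Delta_{\mathcal M}^2f,f\rangle=\|E_0f\|^2+\sum_i\|P_{\mathcal M^\perp}M_{z_i}^*f\|^2 .
$$
Hence $f\in\ker\Delta_{\mathcal M}$ exactly when $f(0)=0$ and $M_{z_i}^*f\in\mathcal M$ for every $i$. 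Since the codimension of $\ker\Delta_{\mathcal M}$ in $\mathcal M$ is finite, any infinite family in $\mathcal M$ has a nontrivial finite linear combination lying in $\ker\Delta_{\mathcal M}$. Whenever we have a finite-rank condition, we will combine it with this kernel description.

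\textbf{Step 1: $1\in\mathcal M$ or $\mathcal M$ has a vanishing order $L\ge1$.} Suppose $1\notin\mathcal M$. We first show that every $f\in\mathcal M$ satisfies $f(0)=0$. The idea is to repeat the proof of Lemma \ref{no polynomial}(1), using the following device: $f\in\mathcal M$ depends on only countably many coefficients, yet there are infinitely many coordinates $z_j$. If $P_{\mathcal M}1\neq0$ and $P_{\mathcal M^\perp}1\neq0$, we look at the elements $z_jP_{\mathcal M}1$ for $j$ large. These lie in $\mathcal M$, and some finite combination $\sum_j c_jz_jP_{\mathcal M}1$ lies in $\ker\Delta_{\mathcal M}$, which gives $\sum_j c_jM_{z_k}^*z_jP_{\mathcal M}1\in\mathcal M$ for all $k$. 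Taking $k$ equal to one of the $j$'s, and using the formula for $M_{z_i}^*z^\alpha$, the coefficients against the new variable are controlled. We then pair with $P_{\mathcal M^\perp}1$; the contradiction should follow from a degree-lowering argument like the one leading to \eqref{1.3}. In the general case we let $L=L_{\mathcal M}\ge1$ be the minimal vanishing order, as in \eqref{eq_Lg}, and choose $f_0\in\mathcal M$ with $E_Lf_0\neq0$.

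\textbf{Step 2: main contradiction.} We choose $k$ with $M_{z_k}^*E_Lf_0\neq0$. We then pick infinitely many coordinates $z_{j_1},z_{j_2},\dots$ with $j_t\neq k$ such that $E_Lf_0$ does not involve $z_{j_t}$ at all. This is possible because the homogeneous polynomial $E_Lf_0$ (more precisely, each element of a countable set of coefficients) involves only countably many variables with nonzero weight, and after a unitary change of variables preserving $H^2$ we may assume it involves only finitely many at the relevant degree up to small error. This is the \emph{main obstacle}: a general element of $H^2$ can involve all the variables. If that approach fails, I would first try to apply the unitary invariance of the space to rotate a finite-dimensional truncation, or use an approximation argument. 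Let $g_t=z_{j_t}f_0\in\mathcal M$. By finite rank, some nontrivial combination $h=\sum_t\mu_tg_t$ lies in $\ker\Delta_{\mathcal M}$. Consequently $M_{z_{j_s}}^*M_{z_k}^*h$, or in the simplest case $M_{z_{j_s}}^*h$, lies in $\mathcal M$. Taking $s$ to be the first index with $\mu_s\neq0$ and projecting onto the homogeneous component of degree $L-1$, exactly as in \eqref{Er-1=Erf}, we find that the degree-$(L-1)$ part equals a nonzero multiple of $M_{z_{j_s}}^*M_{z_k}^*M_{z_{j_s}}E_Lf_0$. Since $z_{j_s}$ does not appear in $E_Lf_0$, this is a positive multiple of $M_{z_k}^*E_Lf_0\neq0$, and this contradicts the minimality of $L$. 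Therefore $L=0$ for every such $\mathcal M$, that is, $P_{\mathcal M}1\ne0$.

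\textbf{Step 3: conclude $\mathcal M=H^2$.} Once we know $\mathcal M$ contains an element $f$ with $f(0)\ne0$, we show $P_{\mathcal M^\perp}1=0$. For this we apply the kernel description to combinations of $z_jP_{\mathcal M}1$, or of $z_j f$, over fresh variables $z_j$. Kernel membership gives $M_{z_j}^*(\sum c_jz_jf)\in\mathcal M$, and if the variables $z_j$ do not appear in $f$ this equals a nonzero multiple of $f$ plus terms in $\mathcal M$. Iterating this in the style of Theorem \ref{M=Hk}, we show that a constant lies in $\mathcal M$. Then $1\in\mathcal M$, and since $\mathcal M$ is a submodule and polynomials are dense in $H^2$, we conclude $\mathcal M=H^2$.
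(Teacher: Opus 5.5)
\textbf{There is a genuine gap: the ``fresh variable'' device that all three steps depend on is not available.} Your strategy is sensible: finite-rank linear dependences plus degree lowering, with new coordinates $z_j$ taking over the role of the high powers $z_j^{n_i}$ in Lemma \ref{no polynomial}. But every step needs coordinates that do not occur at all in $E_Lf_0$ (Step 2) or in $P_{\mathcal M}1$ (Steps 1 and 3), and such coordinates need not exist.

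A unitary change of variables cannot produce them. The polynomial $E_2f_0=\sum_i 2^{-i}z_i^2$ corresponds to a symmetric bilinear form of infinite rank, and $z\mapsto Uz$ preserves that rank, so it never becomes a polynomial in finitely many variables. Nor does a better choice of $f_0$ help: for $\mathcal M=[\sum_i2^{-i}z_i^2]$, every $f_0$ with $E_2f_0\neq0$ has $E_2f_0$ a multiple of this polynomial.

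Truncating ``up to small error'' does not rescue the argument as written. The truncated function is not in $\mathcal M$, while the identities you then use hold only under exact freshness: the degree-$(L-1)$ part being exactly a positive multiple of $M_{z_k}^*E_Lf_0$, and the cross terms vanishing. This is precisely the part you leave open.

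\textbf{Step 2 also has a logical slip.} From $h\in\ker\Delta_{\mathcal M}$ you only get $M_{z_i}^*h\in\mathcal M$, whose lowest-order term has degree $L$, so minimality of $L$ is not contradicted. To get $M_{z_{j_s}}^*M_{z_k}^*h\in\mathcal M$ you need a second finite-rank combination of the vectors $M_{z_k}^*g_t$, as in the proof of Lemma \ref{no polynomial}.

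\textbf{Steps 1 and 3 are not yet arguments.} Both aim at the same implication $P_{\mathcal M}1\neq0\Rightarrow 1\in\mathcal M$. The phrase ``a nonzero multiple of $f$ plus terms in $\mathcal M$'' carries no information, since $f\in\mathcal M$ already. For fresh $z_j$ one actually gets $M_{z_j}^*(z_jf)=\sum_n\frac{1}{n+1}E_nf$. The missing closing step is a weight comparison. Suppose $\sum_n\frac{1}{n+1}E_ng\in\mathcal M$ with $g=P_{\mathcal M}1$. Pairing with $1$ gives $\sum_n\|E_ng\|^2=g(0)=\sum_n\frac{1}{n+1}\|E_ng\|^2$. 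Hence $g$ is constant, and $g=1$ when $g\neq0$.

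\textbf{How the paper avoids the problem.} It obtains the statement as the case $N=1$ of Theorem \ref{finite defect for NP} and never needs exact freshness. It uses compactness of $\Delta_{\mathcal M}$ together with the weakly null sequences $z_i\otimes\eta$, $z_iz^{\alpha_0}\otimes\eta$ and $z_if$, for which freshness becomes exact in the limit $i\to\infty$.
\begin{itemize}
\item Lemma \ref{PM1 neq 0} plays the role of your Step 2, splitting into cases according to whether $\|P_{\mathcal M}(z_iz^{\alpha}\otimes\eta)\|\to0$.
\item Your Steps 1 and 3 are replaced by $\Delta_{\mathcal M}^2P_{\mathcal M}(z_i\otimes\eta)=P_{\mathcal M}(z_i\otimes\eta)-z_iP_{\mathcal M}(1\otimes\eta)\to0$. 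This yields $\lim_i\|z_iP_{\mathcal M}(1\otimes\eta)\|^2=\|P_{\mathcal M}(1\otimes\eta)\|^2$.
\item Since $\lim_i\|z_iz^\alpha\|^2=\|z^\alpha\|^2/(|\alpha|+1)$, this forces $P_{\mathcal M}(1\otimes\eta)$ to be constant by \eqref{a_m+1m+1>a_1a_m} (here $a_n\equiv1$).
\end{itemize}
This route also proves the stronger statement for compact $\Delta_{\mathcal M}$.

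\textbf{How you could repair your own route.} If you want to keep the finite-rank mechanism, make freshness asymptotic in the same spirit. Each linear dependence involves at most $\operatorname{rank}\Delta_{\mathcal M}+1$ vectors, however far out you choose the indices. Normalize the coefficients so the largest has modulus $1$, and apply $M_{z_j}^*$ for the index $j$ carrying that coefficient. The cross terms are then bounded by a fixed multiple of $\sup_{j\geq J}\|M_{z_j}^*u\|$ for finitely many fixed vectors $u$. This tends to $0$ as $J\to\infty$ because $\sum_j\|M_{z_j}^*u\|^2\leq\|u\|^2$.
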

Similar to the finitely-many-variable case, by direct computation we have
\begin{equation}\label{defect operator inf}
\Delta_{\mathcal{M}}^{2}=P_{\mathcal{M}}\left(E_0+\sum_{\alpha\in\mathbb{N}_+^{(\mathbb{N})}}b_\alpha[M_{z^\alpha},P_{\mathcal{M}^{\perp}}]
[P_{\mathcal{M}^{\perp}},M_{z^\alpha}^{*}]\right)P_{\mathcal{M}},
\end{equation}
In the present section, we obtain a stronger result on a general framework. To do this, we need the following lemma.
\begin{lem}\label{PM1 neq 0}
  Let $\mathcal{M}$ be a submodule of $H_{\mathbb{K}}\otimes \mathbb{C}^N$. If there exist $\eta\in \mathbb{C}^N$ and $h\in H_{\mathbb{K}}$ such that
  $$
  P_\mathcal{M}(1\otimes\eta)=0~\text{and}~P_\mathcal{M}(h\otimes\eta)\neq 0,
  $$
  then $\Delta_\mathcal{M}$ is not compact.
\end{lem}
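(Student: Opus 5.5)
The plan is to produce a weakly null sequence $\{u_n\}\subset\mathcal M$ of unit vectors with $\langle \Delta_{\mathcal M}^2u_n,u_n\rangle\ge c>0$ for all $n$. This rules out compactness of $\Delta_{\mathcal M}$: if $\Delta_{\mathcal M}$ were compact, so would be $\Delta^2_{\mathcal M}$, and then $\Delta^2_{\mathcal M}u_n\to0$ in norm along any weakly null sequence. Throughout I would work with the quadratic form coming from \eqref{defect operator inf}, exactly as in \eqref{3.3.star4}:
$$
\langle\Delta_{\mathcal M}^2f,f\rangle=\|E_0f\|^2+\sum_{\alpha}b_\alpha\|P_{\mathcal M^\perp}(M_{z^\alpha}^*\otimes I)f\|^2,\qquad f\in\mathcal M .
$$
It therefore suffices to find one $\alpha$ with $b_\alpha>0$ for which $\|P_{\mathcal M^\perp}(M_{z^\alpha}^*\otimes I)u_n\|$ stays bounded below.

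\textbf{Step 1: low-degree structure of $\mathcal M^\perp$ in the $\eta$-direction.} As in the first part of Lemma \ref{no polynomial}, set $\mathcal M_\eta=\overline{Q_\eta\mathcal M}$. By the hypothesis $P_{\mathcal M}(h\otimes\eta)\neq0$ this is nonzero, and by \eqref{eq_Neta} we have $(H_{\mathbb K}\otimes\mathbb C_\eta)\ominus\mathcal M_\eta=\mathcal M^\perp\cap(H_{\mathbb K}\otimes\mathbb C_\eta)$. Let $L=L_{\mathcal M_\eta}$ be the minimal degree appearing in $\mathcal M_\eta$, defined as in \eqref{eq_Lg}; since $1\otimes\eta\perp\mathcal M$, we get $L\ge1$. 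The key observation is that every $\eta$-valued homogeneous polynomial of degree $<L$ is orthogonal to $\mathcal M_\eta$, hence lies in $\mathcal M^\perp$. Pick $f_0\in\mathcal M$ with $E_LQ_\eta f_0\neq0$. By density, I would arrange that $Q_\eta E_Lf_0$ involves only the variables $z_1,\dots,z_r$, and that the tail of $f_0$ outside these variables is as small as needed.

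\textbf{Step 2: the test sequence.} Take $u_n=z_nf_0/\|z_nf_0\|$ for $n>r$. Since the $z_n$ for distinct large $n$ are asymptotically orthogonal directions, $u_n\to0$ weakly; this can be checked on monomials using $\|z^\alpha\|^2=\alpha!/(a_{|\alpha|}|\alpha|!)$. Also $\|z_nf_0\|$ is bounded above and below, using the bounded weight ratios from condition \eqref{asymtotic}. Next choose $k\le r$ with $(M_{z_k}^*\otimes I)Q_\eta E_Lf_0\ne0$, and take the multi-index $\alpha=e_n+\gamma$, where $\gamma$ is built from $e_k$ and is adapted to an index $m$ with $b_m>0$ (recall $b_1=a_1>0$ by \eqref{b1=a1>0}). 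From the explicit formula for $M_{z_i}^*z^\alpha$, I would compute $(M_{z^\alpha}^*\otimes I)z_nf_0$. Its $\eta$-component in degree $L-|\gamma|$ should be a nonzero multiple, uniform in $n$ thanks to \eqref{asymtotic}, of $(M_{z^\gamma}^*\otimes I)Q_\eta E_Lf_0$, up to errors that tend to $0$ as $n\to\infty$. By Step 1 this polynomial lies in $\mathcal M^\perp$. Pairing $P_{\mathcal M^\perp}(M_{z^\alpha}^*\otimes I)u_n$ against it then gives a lower bound $c>0$ independent of $n$.

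\textbf{Main obstacle.} The delicate point is the choice of $\gamma$ relative to the support of $\{b_m\}$. Only $b_1>0$ is guaranteed, and with $\alpha=e_n$ alone one gets $(M_{z_n}^*\otimes I)z_nf_0\approx c f_0\in\mathcal M$, which is useless. The remedy I would try first is to use instead $\alpha=e_k$ together with the approximate commutation $M_{z_k}^*M_{z_n}\approx c_n M_{z_n}M_{z_k}^*$ for far-away $n$. This leaves the degree-$L$ $\eta$-part $z_n(M_{z_k}^*\otimes I)Q_\eta E_Lf_0$, and one must then show that this part has a component orthogonal to $\mathcal M$ that is bounded below. To do this I would argue by contradiction along the lines of the leading-term argument in \eqref{Er-1=Erf}--\eqref{f2}: if the $\mathcal M^\perp$-components tended to $0$, a limiting argument would force $(M_{z_k}^*\otimes I)E_{L}f_0=0$, contradicting the choice of $k$. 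Controlling the weight constants $a_{|\alpha|-1}/a_{|\alpha|}$ and the tails uniformly in $n$ is where I expect most of the work to lie.
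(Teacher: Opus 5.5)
\textbf{There is a genuine gap.} Your remedy rests on the claim that if the $\mathcal M^\perp$-components of $z_n(M_{z_k}^*\otimes I)Q_\eta E_Lf_0$ tend to $0$, then $(M_{z_k}^*\otimes I)E_Lf_0=0$. That claim is false. More fundamentally, no test sequence of the form $u_n=z_nf_0/\|z_nf_0\|$ with $f_0\in\mathcal M$ can work in general.

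Take the Drury--Arveson kernel in infinitely many variables ($a_n=1$, $b_1=1$, $b_m=0$ for $m\ge 2$), with $N=1$, $\eta=1$, and $\mathcal M=\{f\in H_{\mathbb K}: f(0)=0\}$. The hypotheses of the lemma hold with $h=z_1$. With $L=1$, $f_0=z_1$, $k=1$ you get $z_n(M_{z_1}^*z_1)=z_n\in\mathcal M$. So the $\mathcal M^\perp$-component is identically zero, while $M_{z_1}^*E_1f_0=1\neq 0$.

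It is worse than that. Your own quadratic form gives $\langle \Delta_{\mathcal M}^2f,f\rangle=\sum_j|\langle f,z_j\rangle|^2$ for $f\in\mathcal M$, so $\Delta_{\mathcal M}^2$ is the projection onto the linear polynomials. For every $f_0\in\mathcal M$, the product $z_nf_0$ has no linear part. Hence $\Delta_{\mathcal M}^2u_n=0$ however $f_0$ and $\alpha$ are chosen. Here non-compactness is witnessed by $z_n=P_{\mathcal M}(z_n\otimes\eta)$, which is not of the form $z_n\cdot(\text{element of }\mathcal M)$.

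\textbf{The missing idea is a dichotomy on $\limsup_n\|P_{\mathcal M}(z_nz^\beta\otimes\eta)\|$ over $|\beta|=L-1$.} This is how the paper proves the lemma.

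\emph{First case: this limsup is positive for some $\beta$.} Use $x_n=P_{\mathcal M}(z_nz^\beta\otimes\eta)$. For every $\alpha\neq 0$, $(M_{z^\alpha}^*\otimes I)(z_nz^\beta\otimes\eta)$ is an $\eta$-valued polynomial of degree $<L$, hence lies in $\mathcal M^\perp$ by your Step 1. Since $\mathcal M^\perp$ is invariant under $M_{z^\alpha}^*\otimes I$, it follows that $P_{\mathcal M}(M_{z^\alpha}^*\otimes I)x_n=0$, and therefore $\Delta_{\mathcal M}^2x_n=x_n$. Because $x_n\rightharpoonup 0$ with norms bounded below along a subsequence, $\Delta_{\mathcal M}$ is not compact.

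\emph{Second case: $\|P_{\mathcal M}(z_nz^\beta\otimes\eta)\|\to 0$ for all $|\beta|=L-1$.} Here your $\alpha=e_k$ idea goes through directly, with no contradiction argument and no density or tail control.
\begin{itemize}
\item Choose $f\in\mathcal M$ and $|\alpha_0|=L$ with $\langle f,z^{\alpha_0}\otimes\eta\rangle\ne 0$, and write $z^{\alpha_0}=z_kz^{\beta_0}$.
\item Pair $P_{\mathcal M^\perp}(M_{z_k}^*\otimes I)(z_nf)$ with $z_nz^{\beta_0}\otimes\eta$, whose norm is independent of large $n$.
\item The correction term $\langle (M_{z_k}^*\otimes I)(z_nf),P_{\mathcal M}(z_nz^{\beta_0}\otimes\eta)\rangle$ tends to $0$ by the case hypothesis.
\item The main term $\langle f,(M_{z_n}^*\otimes I)(z_nz^{\alpha_0}\otimes\eta)\rangle$ equals exactly $\frac{a_L}{a_{L+1}(L+1)}\langle f,z^{\alpha_0}\otimes\eta\rangle$ once $n$ exceeds every index occurring in $\alpha_0$.
\end{itemize}
Since $z_nf\rightharpoonup 0$, this shows $P_{\mathcal M^\perp}(M_{z_k}^*\otimes I)P_{\mathcal M}$ is not compact. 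Because $b_1>0$, the defect formula gives $\Delta_{\mathcal M}^2\ge b_1\,(P_{\mathcal M^\perp}(M_{z_k}^*\otimes I)P_{\mathcal M})^*(P_{\mathcal M^\perp}(M_{z_k}^*\otimes I)P_{\mathcal M})$, which transfers the non-compactness to $\Delta_{\mathcal M}$.
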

\begin{proof}
  Without loss of generality, suppose $\eta$ is a unit vector.
  Since $$P_\mathcal{M}(1\otimes\eta)=0\text{ and }P_\mathcal{M}(h\otimes\eta)\neq 0,$$ obviously there is an integer $r\geq 0$, such that for $0\leq i\leq r$
  \begin{equation}\label{Pm Hr+1 neq 0}
    P_\mathcal{M}(H_i\otimes\mathbb{C}_\eta)=\{0\}~\text{and}~P_\mathcal{M}(H_{r+1}\otimes\mathbb{C}_\eta)\not=\{0\},
  \end{equation}
  where $\mathbb{C}_\eta$ is defined as \eqref{Ceta}.
  Next, the lemma will be proved in two cases. \setcounter{Case}{0} 
 \begin{Case}
  $\exists \alpha_0\in \mathbb{N}^{(\mathbb{N})}$ such that $|\alpha_0|=r$ and $\limsup\limits_{i\rightarrow \infty}\|P_\mathcal{M}(z_iz^{\alpha_0}\otimes\eta)\|>0$.
  \end{Case}
  Obviously there is an $\varepsilon>0$ and subsequence $\{n_i\}_{i\geq 1}^\infty$ such that
  $$
  \|P_\mathcal{M}(z_{n_i}z^{\alpha_0}\otimes\eta)\|>\varepsilon,~\forall i\geq 1.
  $$
  Please note that $\forall \alpha\in \mathbb{N}^{(\mathbb{N})}\setminus\{0\}$, $deg(M_{z^\alpha}^*z_{n_i}z^{\alpha_0})\leq r$. By \eqref{Pm Hr+1 neq 0},
  $$
  P_\mathcal{M}\left((M_{z^\alpha}\otimes I)^*z_{n_i}z^{\alpha_0}\otimes\eta\right)=0.
  $$
  It follows that
   $$
  \begin{aligned}
  &\|\Delta_\mathcal{M}^2P_\mathcal{M}(z_{n_i}z^{\alpha_0}\otimes\eta)\|\\
  &=\left\|P_\mathcal{M}(z_{n_i}z^{\alpha_0}\otimes\eta)-\sum_{\alpha\in \mathbb{N}_+^{(\mathbb{N})}}b_\alpha (M_{z^\alpha}\otimes I)P_\mathcal{M}(M_{z^\alpha}^*\otimes I)P_\mathcal{M}(z_{n_i}z^{\alpha_0}\otimes\eta)\right\|\\
  &=\|P_\mathcal{M}(z_{n_i}z^{\alpha_0}\otimes\eta)\|>\varepsilon.
  \end{aligned}
  $$
   Notice that $z_{n_i}z^{\alpha_0}\otimes\eta\overset{w}\rightharpoonup 0$, and then $\Delta_{\mathcal{M}}$ is not compact.
   \begin{Case}
  For all $|\alpha|=r$, $\lim\limits_{i\rightarrow\infty}\|P_\mathcal{M}(z_{i}z^\alpha\otimes\eta)\|=0$.
  \end{Case}
  Since $P_{H_{r+1}\otimes \mathbb{C}_\eta}{\cal M}\neq 0$, there is an $f\in\mathcal{M}$ such that $P_{H_{r+1}\otimes\mathbb{C}_\eta}f\neq 0$. It follows that, there is an $\alpha_0\in\mathbb{N}^{(\mathbb{N})}$ with $|\alpha_0|=r+1$, such that
  $$
  \left\langle f,z^{\alpha_0}\otimes\eta\right\rangle\neq 0.
  $$
  Write $z^{\alpha_0}=z^{\beta_0}z_k$ for some $k\geq 1$ and $|\beta_0|=r$. Next, we claim that 
 $ P_{\mathcal{M}^\perp}(M_{z_k}^*\otimes I)P_{\mathcal{M}}$
  is not compact. In fact,
  $$
  \begin{aligned}
  &\limsup\limits_{i\rightarrow\infty}\|P_{\mathcal{M}^\perp}(M_{z_k}^*\otimes I)P_\mathcal{M}(z_if)\|\\
  &\geq \limsup\limits_{i\rightarrow\infty}
  \left|\left\langle P_{\mathcal{M}^\perp}(M_{z_k}^*\otimes I) (z_if),\frac{z_iz^{\beta_0}\otimes\eta}{\|z_iz^{\beta_0}\|}\right\rangle\right|\\
  &= \limsup\limits_{i\rightarrow\infty}
  \left|\left\langle (M_{z_k}^*\otimes I) (z_if),\frac{z_iz^{\beta_0}\otimes\eta}{\|z_iz^{\beta_0}\|}\right\rangle\right.
  \left.-\left\langle (M_{z_k}^*\otimes I) (z_if),P_\mathcal{M}\frac{z_iz^{\beta_0}\otimes\eta}{\|z_iz^{\beta_0}\|}\right\rangle\right|\\
  &=\limsup\limits_{i\rightarrow\infty}
  \left|\left\langle f,\frac{(M_{z_i}^*z_iz^{\alpha_0})\otimes\eta}{\|z_iz^{\beta_0}\|}\right\rangle\right|\\
  &=C\left|\left\langle f,z^{\alpha_0}\otimes\eta\right\rangle\right|>0
  \end{aligned}
  $$
  where $C=\frac{a_{r+1}}{a_{r+2}\,(r+2)}\sqrt{\frac{a_{r+1}\,(r+1)!}{\beta!}}$ is a constant, which does not depend on $i$.   Therefore, $P_{\mathcal{M}^\perp}(M_{z_k}^*\otimes I)P_{\mathcal{M}}$ is not compact, and hence by \eqref{defect operator inf}, $\Delta_\mathcal{M}$ is not compact.
\end{proof}
The following theorem is the main result in this section.
\begin{thm}\label{finite defect for NP}
  Let $\mathcal{M}$ be a submodule of $H_{\mathbb{K}}\otimes \mathbb{C}^N$. Then the following statements are equivalent.
  \begin{itemize}
    \item[(1)] $\Delta_{\mathcal{M}}$ is compact,
    \item[(2)] there is a subspace $F\subset\mathbb{C}^N$ such that $\mathcal{M}=H_{\mathbb{K}}\otimes F$.
  \end{itemize}
\end{thm}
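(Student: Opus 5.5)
The direction (2)$\Rightarrow$(1) is immediate. If $\mathcal{M}=H_{\mathbb{K}}\otimes F$, then $P_{\mathcal{M}^\perp}=I\otimes P_{F^\perp}$ commutes with every $M_{z^\alpha}\otimes I$. All commutators in \eqref{defect operator inf} therefore vanish, and $\Delta_{\mathcal{M}}^2=P_{\mathcal{M}}E_0P_{\mathcal{M}}$ has rank at most $\dim F$. So $\Delta_{\mathcal{M}}$ is of finite rank, hence compact.

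For (1)$\Rightarrow$(2), assume $\Delta_{\mathcal{M}}$ is compact. Set $E=\{\zeta\in\mathbb{C}^N: P_{\mathcal{M}}(1\otimes\zeta)=0\}$, a linear subspace, and put $F=\mathbb{C}^N\ominus E$. The plan has two steps.

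\emph{Step 1: $\mathcal{M}\subset H_{\mathbb{K}}\otimes F$.} Take any $\eta\in E\setminus\{0\}$. Then $P_{\mathcal{M}}(1\otimes\eta)=0$. If some $h\in H_{\mathbb{K}}$ had $P_{\mathcal{M}}(h\otimes\eta)\neq 0$, Lemma \ref{PM1 neq 0} would make $\Delta_{\mathcal{M}}$ non-compact, which is a contradiction. Hence $H_{\mathbb{K}}\otimes\mathbb{C}_\eta\perp\mathcal{M}$ for every $\eta\in E$, and so $\mathcal{M}\subset H_{\mathbb{K}}\otimes F$.

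\emph{Step 2: $1\otimes\eta\in\mathcal{M}$ for every $\eta\in F$.} Once this holds, $\mathcal{M}$ contains $p\otimes\eta$ for every polynomial $p$. Since polynomials are dense, $\mathcal{M}\supset H_{\mathbb{K}}\otimes F$, and equality follows from Step 1. To prove Step 2, fix $\eta\in F$ and write $u=P_{\mathcal{M}^\perp}(1\otimes\eta)$; suppose $u\neq 0$. The idea is to imitate Case 2 of Lemma \ref{PM1 neq 0}, using infinitely many variables to produce a weakly null sequence on which a compact piece of $\Delta_{\mathcal{M}}$ fails to be small.

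Let $g=P_{\mathcal{M}}(1\otimes\eta)\in\mathcal{M}$. The sequence $z_ig$ lies in $\mathcal{M}$ and is weakly null, because $\|z_i g\|$ is bounded and $\langle z_ig,z^\beta\otimes\zeta\rangle\to 0$ for each fixed monomial. By \eqref{defect operator inf} and $b_1=a_1>0$, the operator $P_{\mathcal{M}^\perp}(M_{z_i}^*\otimes I)P_{\mathcal{M}}$ is dominated by $\Delta_{\mathcal{M}}$. Indeed, for $f\in\mathcal{M}$,
\[
\sum_i b_1\|P_{\mathcal{M}^\perp}(M_{z_i}^*\otimes I)f\|^2\le\langle\Delta_{\mathcal{M}}^2f,f\rangle.
\]
Applying this with $f=z_ig$, we get
\[
\|P_{\mathcal{M}^\perp}(M_{z_i}^*\otimes I)z_ig\|^2\le b_1^{-1}\|\Delta_{\mathcal{M}} z_ig\|^2\to 0
\]
by compactness. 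Next compute $M_{z_i}^*z_i g$. Using the formula for $M_{z_i}^*z^\alpha$, for indices $i$ beyond the finitely many variables carrying the mass of $g$ up to a small tail, one has $M_{z_i}^*z_i z^\alpha\approx \frac{a_{|\alpha|}}{a_{|\alpha|+1}(|\alpha|+1)}z^\alpha$. In particular, the constant term of $M_{z_i}^*z_ig$ is $\frac{1}{a_1}g(0)$, plus corrections that vanish as $i\to\infty$.

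Pairing with $u\in\mathcal{M}^\perp$ gives
\[
\langle P_{\mathcal{M}^\perp}M_{z_i}^*z_ig,\,u\rangle\to\langle D g,u\rangle,
\]
where $D$ is the diagonal operator $z^\alpha\mapsto\frac{a_{|\alpha|}}{a_{|\alpha|+1}(|\alpha|+1)}z^\alpha$. So we need $\langle Dg,u\rangle\neq 0$ for some choice of $\eta$. \emph{This is the main obstacle}: $D$ is not a scalar, so $\langle Dg,u\rangle$ need not reduce to $\langle g,u\rangle=0$. I would first try the more flexible sequence $z_i f$ with $f=P_{\mathcal{M}}(z^\beta\otimes\eta)$, choosing the degree $\beta$ to isolate a single homogeneous component. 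As in Case 2 of Lemma \ref{PM1 neq 0}, this should yield a nonzero limit against a suitable normalized test vector $z_iz^{\beta}\otimes\eta/\|z_iz^\beta\|$. If that fails, I would fall back to comparing the limit with the formula for $\Delta_{\mathcal{M}}^2$ restricted to $E_0$, using that $\eta\in F$ means $g\neq 0$. Either route leads to a contradiction with compactness, which forces $u=0$.
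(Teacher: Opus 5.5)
Your direction (2)$\Rightarrow$(1) and Step 1 are fine; Step 1 uses Lemma \ref{PM1 neq 0} the same way the paper does. Step 2, however, which carries the whole content of the theorem, is not proved. Moreover, the target you set for it is one your own estimate already rules out. Since $M_{z_i}^*M_{z_i}z^\alpha=\frac{a_m(\alpha_i+1)}{a_{m+1}(m+1)}z^\alpha$ with $m=|\alpha|$, you have $M_{z_i}^*z_ig\to Dg$ in norm: the error only involves monomials containing $z_i$, whose share of $\|g\|^2$ tends to $0$. Combined with $\|P_{\mathcal M^\perp}M_{z_i}^*z_ig\|\to0$, this gives $Dg\in\mathcal M$, and therefore $\langle Dg,u\rangle=0$ always. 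So no contradiction can come from trying to show $\langle Dg,u\rangle\neq0$. Neither fallback is an argument. The mechanism of Lemma \ref{PM1 neq 0} rests on its hypothesis $P_{\mathcal M}(1\otimes\eta)=0$: that hypothesis kills the lower-order terms of $\Delta_{\mathcal M}^2$ in its Case 1 and drives its case split, and it is exactly what fails for $\eta\in F\setminus\{0\}$.

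The missing idea is to test $Dg\in\mathcal M$ against $g$ rather than against $u$, together with the CNP coefficient inequality $a_1a_m\le a_{m+1}$ (\cite[Corollary 6.6]{HM}). Your plan never invokes this inequality, and some such inequality is indispensable, because you need $d_m\neq a_1^{-1}$ for $m\ge1$. Since $D$ is self-adjoint, $D(1\otimes\eta)=a_1^{-1}(1\otimes\eta)$, and $Dg\perp u$, you get $\langle Dg,g\rangle=\langle Dg,1\otimes\eta\rangle=a_1^{-1}\|g\|^2$. Write $g=\sum_m g_m$ in homogeneous components. Then this reads $\sum_m d_m\|g_m\|^2=a_1^{-1}\sum_m\|g_m\|^2$, where $d_m=\frac{a_m}{a_{m+1}(m+1)}\le\frac{1}{(m+1)a_1}<a_1^{-1}$ for $m\ge1$. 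Hence $g=1\otimes\zeta$ is constant. Now $\zeta\in G:=\{\xi:1\otimes\xi\in\mathcal M\}$ and $\eta-\zeta\perp G$, so $P_{\mathcal M}(1\otimes\eta)=1\otimes P_G\eta$ for every $\eta$. It follows that $E=G^\perp$, $F=G$, and $u=0$ for $\eta\in F$.

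This completed argument is the paper's proof in different clothing. The paper takes $F=G$ from the start and uses $a_1=b_1$ to compute $\Delta_{\mathcal M}^2P_{\mathcal M}(z_i\otimes\eta)=P_{\mathcal M}(z_i\otimes\eta)-z_iP_{\mathcal M}(1\otimes\eta)$ on the weakly null sequence $P_{\mathcal M}(z_i\otimes\eta)$. From this it deduces $\lim_i\|z_ig\|^2=a_1^{-1}\|g\|^2$; note that $\lim_i\|z_ig\|^2=\langle Dg,g\rangle$. It then compares coefficients via $a_1a_m\le a_{m+1}$.
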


\begin{proof}
$(2)\Rightarrow(1)$ is obvious. $(1)\Rightarrow(2)$: Let
\begin{equation}\label{F inf}
F=\{\zeta\in \mathbb{C}^N|1\otimes \zeta \in \mathcal{M}\}.
\end{equation}
By (\ref{defect operator inf}), it is easy to see that for $\zeta\in F$, 
\begin{equation}\label{delta 1 = 1}
  \Delta_\mathcal{M}^2 (1\otimes\zeta)=1\otimes\zeta.
\end{equation}
Obviously $H_{\mathbb{K}}\otimes F \subseteq \mathcal{M}$. Hence to prove $H_{\mathbb{K}}\otimes F=\mathcal{M}$, it suffices to prove that
$
H_{\mathbb{K}}\otimes F^\perp \subseteq \mathcal{M}^\perp.
$
Suppose that $H_{\mathbb{K}}\otimes F^\perp \nsubseteq \mathcal{M}^\perp$, then there exists a unit vector $\eta\in F^\perp$ and $z^\alpha\in H_{\mathbb{K}} $ such that $z^\alpha \otimes \eta\notin \mathcal{M}^\perp$. Hence by Lemma \ref{PM1 neq 0}, we have 
\begin{equation}\label{PM1 eta neq 0}
  P_\mathcal{M} (1\otimes \eta)\neq 0.
\end{equation}
Write 
\begin{equation}\label{Pm1oeta=c+f}
P_\mathcal{M} (1\otimes\eta)=1\otimes\zeta+ \sum_{m=1}^{\infty}\sum_{|\alpha|=m}z^\alpha\otimes c_{\alpha}, ~\text{where}~ \zeta \in \mathbb{C}^N~ \text{and}~c_\alpha \in \mathbb{C}^N.
\end{equation}
Notice that $P_\mathcal{M} (z_i\otimes\eta)\rightharpoonup 0$. Hence 
\begin{equation}\label{PMzieta=0}
\lim\limits_{i\rightarrow\infty}\Delta_\mathcal{M}^2P_\mathcal{M} (z_i\otimes\eta)=0.
\end{equation}
By \eqref{k} and \eqref{1.a intro}, it is easy to see that
\begin{equation}\label{a1=b1}
  a_1=b_1.
\end{equation}
Hence for $i\geq1,$
\begin{equation}\label{haode}
\begin{aligned}
\Delta_\mathcal{M}^2 P_\mathcal{M}(z_i\otimes \eta)&=\left(P_\mathcal{M}-\sum \limits_{\alpha\in \mathbb{N}_+^{(\mathbb{N})}}b_\alpha (M_{z^\alpha}\otimes I) P_{\mathcal{M}} (M_{z^\alpha}^*\otimes I)\right)P_\mathcal{M}(z_i\otimes\eta)\\
&=P_\mathcal{M}(z_i\otimes\eta)-z_iP_\mathcal{M} (1\otimes\eta).
\end{aligned}
\end{equation}
Notice that
\begin{equation}\label{product with z_iPm1}
  \left\langle P_\mathcal{M}(z_i\otimes\eta)-z_iP_\mathcal{M} (1\otimes\eta),z_iP_\mathcal{M} (1\otimes\eta)\right\rangle
  =\frac{1}{a_1}\|P_\mathcal{M} (1\otimes\eta)\|^2-\|z_iP_\mathcal{M} (1\otimes\eta)\|^2.
\end{equation}
By \eqref{PMzieta=0} and \eqref{haode} we have
\begin{equation}\label{z_ipm=pm}
  \lim\limits_{i\rightarrow\infty}\|z_iP_\mathcal{M} (1\otimes\eta)\|^2=\frac{1}{a_1}\|P_\mathcal{M} (1\otimes\eta)\|^2.
\end{equation}
From \eqref{Pm1oeta=c+f}, by direct computation we have
\begin{equation}\label{am+1}
\begin{aligned}
\lim\limits_{i\rightarrow\infty}\|z_iP_\mathcal{M} (1\otimes\eta)\|^2=&\lim\limits_{i\rightarrow\infty}
\left(\|z_i\|^2\|\zeta\|^2+\sum_{m=1}^{\infty}\sum_{|\alpha|=m}\|z_iz^\alpha\otimes c_{\alpha}\|^2\right)\\
=&\frac{\|\zeta\|^2}{a_1}+\sum_{m=1}^{\infty}\sum_{|\alpha|=m}\frac{\alpha!}{a_{m+1}(m+1)!}\|c_\alpha\|^2,
\end{aligned}
\end{equation}
and
\begin{equation}\label{a1am}
\begin{aligned}
\frac{1}{a_1}\|P_\mathcal{M} (1\otimes\eta)\|^2&=\frac{1}{a_1}\left(\|\zeta\|^2+\sum_{m=1}^{\infty}\sum_{|\alpha|=m}\|z^\alpha\otimes c_{\alpha}\|^2\right)\\
&=\frac{\|\zeta\|^2}{a_1}+\sum_{m=1}^{\infty}\sum_{|\alpha|=m}\frac{\alpha!}{a_1a_{m}m!}\|c_\alpha\|^2.
\end{aligned}
\end{equation}
Since $a_1a_m\leq a_{m+1}$\cite[Corollary 6.6]{HM}, for all $\alpha\in \mathbb{N}^\mathbb{N}\setminus\{0\}$,
\begin{equation}\label{a_m+1m+1>a_1a_m}
\frac{\alpha!}{a_{m+1}(m+1)!}<\frac{\alpha!}{a_1a_{m}m!}.
\end{equation}
Then by \eqref{z_ipm=pm}, \eqref{am+1}, \eqref{a1am} and \eqref{a_m+1m+1>a_1a_m},
$$
c_\alpha=0,~\forall \alpha\in \mathbb{N}^\mathbb{N}\setminus\{0\}.
$$
Hence $P_\mathcal{M}(1\otimes\eta)=1\otimes\zeta$, which implies that $\zeta\in F$. Therefore
$$
\|P_\mathcal{M}(1\otimes\eta)\|^2=\langle 1\otimes\eta,1\otimes\zeta\rangle=0,
$$
which contradicts \eqref{PM1 eta neq 0}. 
\end{proof}

\section{The Failure of essential normality of submodules: infinitely-many-variable case}

At the end of the present paper, we will consider the essential normality of submodules in $H_{\mathbb{K}}$. Geometric Arveson-Douglas Conjecture deals with the essential normality of submodules in Drury-Arveson module $H_d^2$, weighted Bergman space and etc, which is one of the most important topics in Functional Analysis, there are many efforts along this line, \cite{Arv1,DGW,DW,DWy,Eng,FX,GW,Ken,KS,Xia} and the references therein. In this infinitely-many-variable case, the answer to the geometric Arveson-Douglas conjecture is negative.

Let $\mathcal{H}$ be a separable infinite-dimensional Hilbert space and $\mathbb{B}$ the unit ball in $\mathcal{H}$. Let $H_{\mathbb{K}}$ be a regular unitarily invariant reproducing kernel Hilbert space (UIRKHS) of holomorphic functions on $\mathbb{B}$.
As usual write $R_{z_i}=M_{z_i}|_{\mathcal{M}}$. A submodule $\mathcal{M}$ of $H_{\mathbb{K}}$ is called essentially normal, if $R_{z_i}R_{z_i}^*-R_{z_i}^*R_{z_i}$ is compact for each $i\geq 1$. In this section, we prove the following theorem.
\begin{thm}\label{PM1=0}
   Let $\mathcal{M}$ be a nonzero submodule of $H_{\mathbb{K}}\otimes \mathbb{C}^N$. Then $\mathcal{M}$ is not essentially normal.
\end{thm}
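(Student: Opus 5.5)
The plan is to show that a single fixed coordinate already fails. Fix a nonzero $f\in\mathcal M$. I will find an index $i$ and a weakly null sequence $h_n\in\mathcal M$ with $\|h_n\|$ bounded such that
$$
\liminf_{n\to\infty}\langle (R_{z_i}^*R_{z_i}-R_{z_i}R_{z_i}^*)h_n,h_n\rangle>0 .
$$
This forces $[R_{z_i}^*,R_{z_i}]$ to be non-compact. The model case is $\mathcal M=H_{\mathbb K}$: there $M_{z_1}^*z_n=0$ for $n\neq 1$, while $\|z_1z_n\|^2$ is a fixed positive multiple of $\|z_n\|^2$. The sequence $\{z_n\}$ is weakly null and witnesses non-compactness. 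For general $\mathcal M$ I mimic this by taking $h_n=z_nf=(M_{z_n}\otimes I)f\in\mathcal M$ and letting $n\to\infty$ ("fresh" variables). The test quantity is
$$
\langle [R_{z_i}^*,R_{z_i}]h_n,h_n\rangle=\|z_iz_nf\|^2-\|P_{\mathcal M}(M_{z_i}^*\otimes I)z_nf\|^2\ \ge\ \|z_iz_nf\|^2-\|(M_{z_i}^*\otimes I)z_nf\|^2 .
$$

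\textbf{Step 1: $h_n$ is weakly null and bounded.} For $g\in H_{\mathbb K}\otimes\mathbb C^N$ we have $\langle z_nf,g\rangle=\langle f,(M_{z_n}^*\otimes I)g\rangle$. Since $b_1=a_1>0$ (cf.\ \eqref{a1=b1}) and the $1/\mathbb K$-contractivity gives $\sum_n b_1\|(M_{z_n}^*\otimes I)g\|^2\le\|g\|^2$, it follows that $\|(M_{z_n}^*\otimes I)g\|\to0$. Hence $h_n\rightharpoonup0$, and $\|h_n\|\le\|M_{z_n}\|\|f\|$ is bounded.

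\textbf{Step 2: asymptotic formulas as $n\to\infty$.} Expand $f=\sum_\alpha z^\alpha\otimes c_\alpha$ and use $\|z^\alpha\|^2=\alpha!/(a_{|\alpha|}|\alpha|!)$ together with the formula for $M_{z_i}^*z^\alpha$. Once $n$ exceeds the variables relevant to a truncation of $f$, the families $\{z_nz^\alpha\}$ and $\{z_iz_nz^\alpha\}$ remain orthogonal, and their norms are explicit multiples of $\|z^\alpha\|^2$. A tail estimate controls the part of $f$ involving high variables, uniformly in $n$. This yields two limits:
$$
\lim_n\|z_iz_nf\|^2=\sum_\alpha\frac{(\alpha+e_i)!}{a_{m+2}(m+2)!}\|c_\alpha\|^2,
\qquad
\lim_n\|(M_{z_i}^*\otimes I)z_nf\|^2\le C\,\|(M_{z_i}^*\otimes I)f\|^2 .
$$
Here $m=|\alpha|$. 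The constant $C$ comes from the ratios $a_m/a_{m+1}$, which are bounded by the regularity condition \eqref{asymtotic}.

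\textbf{Step 3: choice of $i$.} By the same contractivity argument, $\sum_i\|(M_{z_i}^*\otimes I)f\|^2\le\|f\|^2/b_1$, so $\|(M_{z_i}^*\otimes I)f\|\to0$ as $i\to\infty$. For the first limit, fix a degree $D$ and a finite set of monomials of degree at most $D$ carrying a definite portion of $\|f\|^2$. Then the first limit is at least
$$
\delta(f):=\min_{m\le D}\frac{a_m}{a_{m+2}(m+1)(m+2)}\sum_{|\alpha|\le D}\|z^\alpha\otimes c_\alpha\|^2>0 ,
$$
and this bound is independent of $i$. Choosing $i$ large with $C\|(M_{z_i}^*\otimes I)f\|^2<\delta(f)/2$ gives the desired liminf $\ge\delta(f)/2$. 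Consequently $R_{z_i}R_{z_i}^*-R_{z_i}^*R_{z_i}$ is not compact, and $\mathcal M$ is not essentially normal.

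The main obstacle is Step 2: justifying the exchange of limits and infinite sums, since $f$ may involve infinitely many variables. The second estimate must hold with a constant independent of $i$ and $n$. I would first handle $f$ a polynomial in finitely many variables explicitly. I would then pass to general $f$ by approximation, using boundedness of $M_{z_i}$, $M_{z_n}$, $M_{z_i}^*$ uniformly in $i,n$; the norms $\|M_{z_j}\|$ are all equal by unitary invariance.
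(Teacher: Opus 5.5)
Your proposal is correct and follows essentially the same route as the paper: weakly null test vectors $z_nf$ in a ``fresh'' variable, a fixed coordinate on which $f$ is nearly independent (so the $R_{z_i}R_{z_i}^*$ term is negligible), and explicit monomial norms to bound $\|z_iz_nf\|$ from below. The differences are minor. You choose the coordinate via $\sum_i\|(M_{z_i}^*\otimes I)f\|^2\le\|f\|^2/b_1$ plus a uniform commutation constant $C$, whereas the paper truncates $f$ to the first $k-1$ variables so that $(M_{z_k}^*\otimes I)(z_if_{k-1})=0$ exactly; and you estimate the quadratic form of the commutator rather than its norm.
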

\begin{proof}
Since $\lim\limits_{n\to \infty}\frac{a_n}{a_{n+1}}=1$, for $j\geq 1$, $\|M_{z_j}\|=\sup\limits_n \sqrt{\frac{a_n}{a_{n+1}}}<\infty$. For simplicity, denote 
\begin{equation}\label{5.a1}
s=\|M_{z_j}\|,
\end{equation}
which is independent of $j$. Let $E_{r,\infty}: H_{\mathbb{K}}\otimes \mathbb{C}^N\to H_{r}$ be the orthogonal projection with $H_r$ being defined as 
 $$
 H_{r}=\{f\in H_{\mathbb{K}}\otimes \mathbb{C}^N;  f\text{ is homogeneous and~} deg f=r\}.
 $$
From $\mathcal{M}\neq 0$, there is an integer $r\geq 0$ and a unit vector $f\in \mathcal{M}$ satisfying ${E}_{r,\infty}f\neq 0$. Let $H_{\mathbb{K},i}=\overline{\operatorname{span}\{z^\alpha|\alpha\in\mathbb{N}^i\}}$. Then there is an integer $k>0$ such that for $i\geq k-1$,
\begin{equation}\label{5.b}
\|P_{H_{\mathbb{K},i}\otimes \mathbb C^N}f-f\|<\frac{a_r}{8s^4a_{r+2}(r+1)(r+2)}\|{E}_{r,\infty}f\|^2.
\end{equation}
Write
$$
f_i=P_{H_{\mathbb{K},i}\otimes \mathbb C^N}f\text{ and }~f^{(i)}=f-f_i.
$$
Since $(M_{z_k}\otimes I)^*z_if_{k-1}=0$ for $i>k$, by \eqref{5.a1}, we have
\begin{equation}\label{5.a}
(M_{z_k}\otimes I)^*(z_if)=(M_{z_k}\otimes I)^*(z_if^{(k-1)}).
\end{equation}
Hence
$
  \|R_{z_k}R_{z_k}^*(z_if)\|\leq s^3\|f^{(k-1)}\|.
$
It follows that 
  \begin{equation}\label{5.3}
  \begin{aligned}
  \|(R_{z_k}^*R_{z_k}-R_{z_k}R_{z_k}^*)(z_if)\|\geq
    \frac{\|R_{z_k}(z_if)\|^2}{\|z_if\|}-s^3\|f^{(k-1)}\|.
   \end{aligned}
  \end{equation}
By \eqref{5.a1}, \eqref{5.b} and $\|f_{k-1}\|\leq \|f\|=1$, we have 
\begin{equation}\label{5.c}
\begin{aligned}
  \|z_kz_if_{k-1}\|\left\|z_kz_if^{(k-1)}\right\|
  \leq \frac{a_r}{8a_{r+2}(r+1)(r+2)}\|{E}_{r,\infty}f\|^2,
\end{aligned}
\end{equation}
which implies that 
   \begin{equation}\label{5.1}
     \begin{aligned}
    \frac{\|R_{z_k}(z_if)\|^2}{\|z_if\|}
   \geq&
   \frac{\left(\left\|z_kz_if_{k-1}\right\|-\left\|z_kz_if^{(k-1)}\right\|\right)^2}{s}\\
   \geq&s^{-1}
   \left(\|z_kz_if_{k-1}\|^2-2\|z_kz_if_{k-1}\|\left\|z_kz_if^{(k-1)}\right\|\right)\\
   \geq &s^{-1}\|E_{r+2,\infty}(z_kz_if_{k-1})\|^2-\frac{s^{-1}a_r}{4a_{r+2}(r+1)(r+2)}\|{E}_{r,\infty}f\|^2.
   \end{aligned}
   \end{equation}
  Thus, combining (\ref{5.3}), (\ref{5.1}) and (\ref{5.b}), 
   \begin{equation}\label{5.6}
     \begin{aligned}
     & \|(R_{z_k}^*R_{z_k}-R_{z_k}R_{z_k}^*)(z_if)\|\\
     \geq &s^{-1}\|E_{r+2,\infty}(z_kz_if_{k-1})\|^2-\frac{3a_r}{8sa_{r+2}(r+1)(r+2)}\|{E}_{r,\infty}f\|^2.
     \end{aligned}
   \end{equation}
 Next, we claim that
\begin{equation}\label{6.a}
\begin{aligned}
  \|E_{r+2,\infty}(z_kz_if_{k-1})\|^2\geq 
  \frac{a_{r}}{a_{r+2}(r+1)(r+2)}\|{E}_{r,\infty}f\|^2-\frac{a_r}{4a_{r+2}(r+1)(r+2)}\|{E}_{r,\infty}f\|^2.
\end{aligned}
\end{equation}
Indeed, if the claim \eqref{6.a} holds, then by \eqref{5.6},
$$
\begin{aligned}
\|(R_{z_k}^*R_{z_k}-R_{z_k}R_{z_k}^*)(z_if)\|
\geq \frac{3a_r}{8sa_{r+2}(r+1)(r+2)}\|{E}_{r,\infty}f\|^2
>0,
\end{aligned}
$$
and notice that $ z_if\overset{w}\rightharpoonup 0$, it follows that $R_{z_k}^*R_{z_k}-R_{z_k}R_{z_k}^*$ is not compact, i.e. $\mathcal{M}$ is not essentially normal.   
 We now proceed to prove the claim \eqref{6.a}.
By \eqref{5.b} and \eqref{5.a1}, 
\begin{equation}\label{5.e}
\begin{aligned}
  \|E_{r+2,\infty}(z_kz_if)\|\|E_{r+2,\infty}(z_kz_if^{(k-1)})\|=&
  \|z_kz_i{E}_{r,\infty}f\|\|E_{r+2,\infty}(z_kz_if^{(k-1)})\|
  \\\leq &\frac{a_r}{8a_{r+2}(r+1)(r+2)}\|{E}_{r,\infty}f\|^2.
\end{aligned}
\end{equation}
Since $\|z^\alpha\|^2=\frac{\alpha!}{a_{|\alpha|}|\alpha|!}$ for $\alpha \in \mathbb{N}^{\mathbb{N}}$, it easy to see that \begin{equation}\label{5.51}
\|(M_{z_j}\otimes I)g\|^2\geq \frac{a_l}{a_{l+1}(l+1)}\|g\|^2
\end{equation}
 for all $j$ and $g\in \overline{\operatorname{ran}E_{l,\infty}}$. Hence \eqref{5.e} and \eqref{5.51} ensure that
\begin{equation}\label{5.5}
  \begin{aligned}
  &\|E_{r+2,\infty}(z_kz_if_{k-1})\|^2\\
  \geq &\left(\|E_{r+2,\infty}(z_kz_if)\|-\|E_{r+2,\infty}(z_kz_if^{(k-1)})\|\right)^2\\
  \geq &\|z_kz_i{E}_{r,\infty}f\|^2-2\|E_{r+2,\infty}(z_kz_if)\|\|E_{r+2,\infty}(z_kz_if^{(k-1)})\|\\
  \geq &\frac{a_{r}}{a_{r+2}(r+1)(r+2)}\|{E}_{r,\infty}f\|^2-\frac{a_r}{4a_{r+2}(r+1)(r+2)}\|{E}_{r,\infty}f\|^2.
  \end{aligned}
\end{equation}
\end{proof}
For the Arveson space in infinitely many variables, we have shown that nontrivial submodules are never essentially normal. Nevertheless, there exist nontrivial quotient modules with one-dimensional zero variety that are indeed essentially normal, as demonstrated by the following example. For a submodule $\mathcal{M}\subset H^2$, write
$S_f=P_{\mathcal{M}^\perp}M_f|_{\mathcal{M}^\perp}$ for $f\in \operatorname{Mult}(H^2)$.

\begin{exam}\label{M not essentially normal}
  Let $\mathcal{M} =\left[z_{1}-\lambda z_{2}, z_{2}-\lambda z_{3}, \cdots\right]\subset H^2$, where $\lambda>1$. Then $M^{\perp}$ is essentially normal.
\end{exam}
\begin{proof}
Set
$
g_{0}=\sum\limits_{i=1}^{\infty} \lambda^{-i+1} z_{i}.
$
First, we claim that
\begin{equation}\label{g0n}
{\mathcal{M}}^\perp=\overline{\operatorname{span}\left\{1, {g}_{0}, {g}_{0}^{2}, \cdots\right\}}.
\end{equation}
In fact, it is easy to see that
\begin{equation}\label{g_0^n}
  g_0^n=\sum_{|\alpha| = n} \frac{n!}{\alpha!} \lambda^{n - \sum\limits_{k=1}^\infty k \alpha_k}  z^\alpha,\quad n\geq 1.
\end{equation}
Hence for $n\geq1, i\geq1, \beta \in \mathbb N^{\mathbb N},$ 
$$
\begin{aligned}
&\left\langle g_0^n, z^\beta (z_i-\lambda z_{i+1})\right\rangle
\\
=&\left\langle \frac{n!}{\beta!(\beta_i+1)}\lambda^{n -i- \sum\limits_{k=1}^\infty k \beta_k}z^\beta z_i, z^\beta z_i\right\rangle
-\left\langle \frac{n!}{\beta!(\beta_{i+1}+1)}\lambda^{n -(i+1)- \sum\limits_{k=1}^\infty k \beta_k}z^\beta z_{i+1}, \lambda z^\beta z_{i+1}\right\rangle\\
=&\frac{n!}{\beta!(\beta_{i}+1)}\lambda^{n -i- \sum\limits_{k=1}^\infty k \beta_k}\frac{\beta!(\beta_i+1)}{(|\beta|+1)!}
-\frac{n!}{\beta!(\beta_{i+1}+1)}\lambda^{n -i- \sum\limits_{k=1}^\infty k \beta_k}\frac{\beta!(\beta_{i+1}+1)}{(|\beta|+1)!}\\
=&0.
\end{aligned}
$$
Notice that $1\perp \mathcal{M}.$
Thus
\begin{equation}\label{span in Mperp}
\overline{\operatorname{span}\left\{1, {g}_{0}, {g}_{0}^{2}, \cdots\right\}}\perp \mathcal{M}.
\end{equation}
By a simple computation,
\begin{equation}\label{span in Mperp1}
z_1=\frac{\lambda^{2}-1}{\lambda^{2}}\left(g_{0}+\sum_{n=1}^{\infty}\left( \lambda^{-2 n} \sum_{k=1}^{n} \lambda^{k-1} (z_k-\lambda z_{k+1}) \right)\right).
\end{equation}
Then by \eqref{span in Mperp} and \eqref{span in Mperp1}, \begin{equation}\label{z_1 in oplus}z_1\in {\cal M}\oplus \overline{\operatorname{span}\left\{1, {g}_{0}, {g}_{0}^{2}, \cdots\right\}}.\end{equation}
Moreover, it is clear that for $n\geq 2$, 
\begin{equation}\label{z_i in oplus}
z_{n}=\lambda^{-n}\left(z_{1}-\sum\limits_{k=1}^{n-1} \lambda^{k-1} (z_k-\lambda z_{k+1}) \right). 
\end{equation}
Hence by \eqref{z_1 in oplus} and \eqref{z_i in oplus}, for $n\geq 1$, there exist nonzero $h_n\in \overline{\operatorname{span}\left\{1, {g}_{0}, {g}_{0}^{2}, \cdots\right\}}$
 and $f_n\in\mathcal{M}$ such that
\begin{equation}\label{zn=f+mu g}
  z_n=h_n+f_n. 
\end{equation}
Notice $g_0^i\bot g_0^j, i\neq j.$ Then there exist $\mu_n\neq 0$ such that $h_n=\mu_n g_0.$ 
Therefore \eqref{span in Mperp}, \eqref{zn=f+mu g} and $g_0\in \operatorname{Mult}(H^2)$ ensures that for $\alpha\in\mathbb{N}^\mathbb{N}$,
\begin{equation}\label{z alpha in oplus}
z^\alpha\in \mathcal{M} \oplus \overline{\operatorname{span}\left\{1, {g}_{0}, {g}_{0}^{2}, \cdots\right\}}.
\end{equation}
which implies that $\mathcal{M}^\perp=\overline{\operatorname{span}\left\{1, {g}_{0}, {g}_{0}^{2}, \cdots\right\}}$. The claim is proved.

 For $i\geq1,$ since $f_i\in \mathcal{M}$, we have $S_{f_{i}}=0$. Then by \eqref{zn=f+mu g}, $S_{z_{i}}=\mu_{i} S_{{g}_{0}},$ which implies that
\begin{equation}\label{Szi}
S_{z_{i}} S_{z_{i}}^{*}-S_{z_{i}}^{*} S_{z_{i}}=\left|\mu_{i}\right|^{2}\left(S_{{g}_{0}} S_{{{g}}_{0}}^{*}-S_{{{g}}_{0}}^{*} S_{{{g}}_{0}}\right).
\end{equation}
Hence to prove $\mathcal{M}^\perp$ is essentially normal, it suffices to prove that $S_{{g}_{0}} S_{{{g}}_{0}}^{*}-S_{{{g}}_{0}}^{*} S_{{{g}}_{0}}$ is compact. From
$$
\left\langle S_{{{g}}_{0}}^{*}g_0^m,g_0^n \right\rangle=\left\langle g_0^m,g_0^{n+1} \right\rangle=\delta_{m,n+1}\|g_0^{m}\|^2,~\forall m,n\geq 0,
$$
and \eqref{g0n}, we obtain that
\begin{equation}\label{Sg0*}
S_{{{g}}_{0}}^{*}g_0^m=\frac{\|g_0^m\|^2}{\|g_0^{m-1}\|^2}g_0^{m-1},~\forall m\geq 1
\end{equation}
and $S_{{{g}}_{0}}^{*}1=0$. By \eqref{g_0^n}, $\|g_0^n\|^2=\left(\frac{\lambda^2}{\lambda^2-1}\right)^n, n\geq1;$ therefore $\frac{\|g_0^n\|^2}{\|g_0^{n-1}\|^2}-\frac{\|g_0^{n+1}\|^2}{\|g_0^{n}\|^2}=0.$ It follows that for $n\geq 1$, \eqref{Sg0*} gives that
$$
\left(S_{{g}_{0}} S_{{{g}}_{0}}^{*}-S_{{{g}}_{0}}^{*} S_{{{g}}_{0}}\right)g_0^n
=\left(\frac{\|g_0^n\|^2}{\|g_0^{n-1}\|^2}-\frac{\|g_0^{n+1}\|^2}{\|g_0^{n}\|^2}\right)g_0^n
=0,
$$
which implies that $S_{{g}_{0}} S_{{{g}}_{0}}^{*}-S_{{{g}}_{0}}^{*} S_{{{g}}_{0}}$ is compact by \eqref{g0n}.
\end{proof}

\begin{rem}
  It is worth noting that $S_{{{g}}_{0}}$ is equal to a multiple of the shift operator on the Hardy space $H^2(\mathbb{D})$. 
\end{rem}
Xia \cite{Xia} proved that over the Arveson space in finitely many variables, for quotient modules defined by $V$, the commutators of the multiplication operators lie in the Schatten class \(\mathcal{S}^1\), where $V$ is a one-dimensional smooth holomorphic zero variety in an open neighborhood of $\overline{\mathbb{B}_d}$ and transversal to the boundary $\partial \mathbb{B}_d$; moreover, this trace-class property holds uniformly in the dimension \(d\). Inspired by this, we propose the following conjecture.
\begin{Conj}Let $V$ be a one-dimensional smooth holomorphic zero variety of an open neighborhood of $\overline{\mathbb B}$, which is transversal to the boundary $\partial \mathbb B$. Then the quotient modules defined by $V$ of the Drury-Arveson module in infinitely many variables are essentially normal.
\end{Conj}

 \noindent{Penghui Wang, School of Mathematics, Shandong University, Jinan 250100, Shandong, P. R. China, Email:
phwang@sdu.edu.cn}

\noindent{Ruoyu Zhang, School of Mathematics, Shandong University, Jinan 250100, Shandong, P. R. China, Email:
ryzhangmath@163.com
}

 \noindent{Zeyou Zhu, School of Mathematics, Shandong University, Jinan 250100, Shandong, P. R. China, Email:
13155486329@163.com}


\begin{thebibliography}{99}

\bibitem{ACD} O. P. Agrawal, D. N. Clark and R. G. Douglas, \emph{Invariant subspaces in the polydisk}, Pacific J. Math. 121(1986), no. 1, 1-11.


    \bibitem{Aleman2025} A. Aleman, M. Hartz, J. E. M\textsuperscript{c}Carthy and S. Richter, \emph{Multiplier varieties and multiplier algebras of CNP Dirichlet series kernels}. arXiv:2507.21537 (2025).
        

\bibitem{AglerMcCarthy2000} J. Agler and J. E. M\textsuperscript{c}Carthy, \emph{Complete Nevanlinna-Pick kernels}. J. Funct. Anal. 175 (2000), no. 1, 111-124.
\bibitem{AglerMcCarthyBook2002} J. Agler and J. E. M\textsuperscript{c}Carthy, \emph{Pick interpolation and Hilbert function spaces}. Graduate Studies in Mathematics, 44. American Mathematical Society, Providence, RI, 2002.

    
\bibitem{Atiyah} M. Atiyah and I. Macdonald, \emph{Introduction to commutative algebra}. Addison-Wesley Publishing Co., Reading, Mass.-London-Don Mills, Ont., 1969.


\bibitem{interpolating sequence} A. Aleman, M. Hartz, J. E. M\textsuperscript{c}Carthy and S. Richter, \emph{Interpolating Sequences in Spaces with the Complete Pick Property}. Int. Math. Res. Not. IMRN 2019, no. 12, 3832–3854.

    \bibitem{Arveson curvature} W. Arveson, \emph{The curvature invariant of a Hilbert module over $\mathbb{C}[z_1,\ldots,z_d].$} J. Reine Angew. Math. 522 (2000), 173-236.
    \bibitem{Arveson Dirac} W. Arveson,  \emph{The Dirac operator of a commuting $d$-tuple}.
J. Funct. Anal. 189 (2002), no. 1, 53-79.

\bibitem{Arv1} W. Arveson, \emph{Quotients of standard Hilbert modules.}  Trans. Amer. Math. Soc. 359 (2007), no. 12, 6027–6055.

   \bibitem{CNP Curvature} T. Bhattacharyya and A. Jindal, \emph{Complete Nevanlinna-Pick kernels and the curvature invariant}. Ann. Mat. Pura Appl. (4) 204 (2025), no. 3, 1183–1197.
   \bibitem{CNP character} T. Bhattacharyya and A. Jindal, \emph{Complete Nevanlinna-Pick kernels and the characteristic function}. Adv. Math. 426 (2023), Paper No. 109089, 25 pp.
   \bibitem{CG} X. Chen and K. Guo, \emph{Analytic Hilbert modules}. Chapman {\&} Hall/CRC Research Notes in Mathematics, 433. Chapman \& Hall/CRC, Boca Raton, FL, 2003.
  
  \bibitem{CH}R.~Clou\^atre and M.~Hartz, \emph{Multiplier algebras of complete Nevanlinna-Pick spaces: dilations, boundary representations and hyperrigidity}. J. Funct. Anal. 274 (2018), no. 6, 1690-1738.
  
  \bibitem{CT}R.~Clou\^atre and E.~J.~Timko, \emph{Gelfand transforms and boundary representations of complete Nevanlinna-Pick quotients}. Trans. Amer. Math. Soc. \textbf{373} (2020), no. 2, 1339-1374.
  
  \bibitem{DGW} R. G. Douglas, K. Guo and Y. Wang, \emph{On the p-essential normality of principal submodules of the Bergman module on strongly pseudoconvex domains}. Adv. Math. 407 (2022), Paper No. 108546, 41 pp.
      
  \bibitem{DHS} K.~R.~Davidson, M.~Hartz, and O.~M.~Shalit, \emph{Multipliers of embedded discs}. Complex Anal. Oper. Theory 9 (2015), no.~2, 287-321.
  
   \bibitem{DP} R. G. Douglas and V. Paulsen, \emph{Hilbert modules over function algebras}. Pitman Research Notes in Mathematics Series, 217. Longman Scientific \& Technical, Harlow; copublished in the United States with John Wiley \& Sons, Inc., New York, 1989.
   
   
   \bibitem{DPSY} R. G. Douglas, V. Paulsen, C. Sah and K. Yan, \emph{Algebraic reduction and rigidity for Hilbert modules.} Amer. J. Math. 117 (1995), no. 1, 75–92.
   
   \bibitem{DW} R. G. Douglas and K. Wang, \emph{ A harmonic analysis approach to essential normality of principal submodules.}  J. Funct. Anal. 261 (2011), no. 11, 3155–3180.
   
   \bibitem{DWy} R. G. Douglas and Y. Wang, \emph{Geometric Arveson-Douglas conjecture and holomorphic extensions.} Indiana Univ. Math. J. 66 (2017), no. 5, 1499–1535. 
   
   \bibitem{DY} R. G. Douglas and K. Yan, \emph{On the rigidity of Hardy submodules.}  Integral Equations Operator Theory 13 (1990), no. 3, 350–363.
   
   \bibitem{Eng} M. Engli$\check{\text{s}}$ and J. Eschmeier, \emph{Geometric Arveson-Douglas conjecture.}  Adv. Math. 274 (2015), 606–630.
   
   \bibitem{Fang Xiang} X. Fang, \emph{Hilbert polynomials and Arveson's curvature invariant}. J. Funct. Anal. 198 (2003), no. 2, 445-464.
   
   \bibitem{FX} Q. Fang and J. Xia, \emph{Essential normality of polynomial-generated submodules: Hardy space and beyond. }  J. Funct. Anal. 265 (2013), no. 12, 2991–3008. 
   
   \bibitem{GRS}J. Gleason, S. Richter and C. Sundberg,
 \emph{On the index of invariant subspaces in spaces of analytic functions of several complex variables}.
J. Reine Angew. Math. 587 (2005), 49-76.

\bibitem{GC} K. Guo and X. Chen, \emph{Analytic Hilbert modules.} Chapman  Hall/CRC Research Notes in Mathematics, 433. Chapman  Hall/CRC, Boca Raton, FL, 2003.
\bibitem{GR} H.~Grauert and R.~Remmert, \emph{Coherent Analytic Sheaves}. Grundlehren der mathematischen Wissenschaften [Fundamental Principles of Mathematical Sciences], 265. Springer-Verlag, Berlin, 1984.
\bibitem{Guo1} K. Guo, \emph{Characteristic spaces and rigidity for analytic Hilbert modules.}  J. Funct. Anal. 163 (1999), no. 1, 133–151. 

\bibitem{Guo2} K. Guo, \emph{Equivalence of Hardy submodules generated by polynomials.}  J. Funct. Anal. 178 (2000), no. 2, 343–371. 

   \bibitem{Gu} K. Guo, \emph{Defect operators for submodules of $H_d^2$.} J. Reine Angew. Math. 573 (2004), 181-209.
   \bibitem{Gu2} K. Guo, \emph{Defect operators, defect functions and defect indices for analytic submodules.} J. Funct. Anal. 213 (2004), no. 2, 380-411.
   
   
   \bibitem{inner multipliers}  D. Greene, S. Richter and C. Sundberg, \emph{The structure of inner multipliers on spaces with complete Nevanlinna-Pick kernels.}
J. Funct. Anal. 194 (2002), no. 2, 311-331.

\bibitem{GW} K. Guo and K. Wang, \emph{Essentially normal Hilbert modules and K-homology.} Math. Ann. 340 (2008), no. 4, 907–934. 

\bibitem{GWy}K. Guo and Y. Wang, \emph{A survey on the Arveson-Douglas conjecture.} Operator theory, operator algebras and their interactions with geometry and topology—Ronald G. Douglas memorial volume, 289–311, Oper. Theory Adv. Appl., 278, Birkhäuser/Springer, Cham, 2020.

\bibitem{HM} M. Hartz, \emph{On the Isomorphism Problem for Multiplier Algebras of Nevanlinna-Pick Spaces}. Canad. J. Math. 69 (2017), no. 1, 54–106.
    
\bibitem{Ken} M. Kennedy, \emph{Essential normality and the decomposability of homogeneous submodules.} Trans. Amer. Math. Soc. 367 (2015), no. 1, 293–311.

\bibitem{KS} M. Kennedy and O. Shalit, \emph{Essential normality, essential norms and hyperrigidity.} J. Funct. Anal. 268 (2015), no. 10, 2990–3016.

\bibitem{Levy}R. Levy, \emph{Note on the curvature and index of an almost unitary contraction operator}. Integral Equations Operator Theory 48 (2004), no. 4, 553-555.
   
\bibitem{trent}S. McCullough and T. Trent, \emph{Invariant Subspaces and Nevanlinna–Pick Kernels}. J. Funct. Anal. 178 (2000), no. 1, 226–249.

\bibitem{Muhly}P. Muhly and B. Solel, \emph{The curvature and index of completely positive maps}. Proc. London Math. Soc. (3) 87 (2003), no. 3, 748-778.

\bibitem{Parrott}S. Parrott, \emph{The curvature of a single contraction operator on a Hilbert space}. Preprint, 2000, arXiv:math.OA/0006224 v1.

\bibitem{Popescu2001} G. Popescu, \emph{Curvature invariant for Hilbert modules over free semigroup algebras}. Adv. Math. 158 (2001), no. 2, 264-309.

\bibitem{Popescu2015} G. Popescu, \emph{Curvature invariant on noncommutative polyballs}. Adv. Math. 279 (2015), 104-158.

\bibitem{Pu} M. Putinar, \emph{On the rigidity of Bergman submodules.} Amer. J. Math. 116(1994), no. 6, 1421-1432.

\bibitem{Popescu2017} G. Popescu, \emph{Euler characteristic on noncommutative polyballs}. J. Reine Angew. Math. 2017 (2017), no. 728, 195-236.
    
\bibitem{Ric} S. Richter, \emph{Unitary equivalence of invariant subspaces of Bergman and Dirichlet spaces.} Pacific J. Math. 133(1988), no. 1, 151-156.    

\bibitem{Rotman} J. Rotman, \emph{Advanced Modern Algebra.}  Second edition. Graduate Studies in Mathematics, 114. American Mathematical Society, Providence, RI, 2010.

\bibitem{curv} P. Wang, R. Zhang and Z. Zhu, \emph{Arveson's version of the Gauss-Bonnet-Chern formula for Hilbert modules over the polynomial rings}, J. Reine Angew. Math. 2025 (2025), no. 827, 151-190.
    
    \bibitem{Xia} J. Xia, \emph{Geometric Arveson-Douglas Conjecture for the Drury-Arveson Space: The Case of One-Dimensional Variety}. Adv. Math. 440 (2024), Paper No. 109525, 65 pp.
\end{thebibliography}
\end{document}